\date{\today}
\newcommand\al{\alpha}
\newcommand\la{{\lambda}}
\newcommand\bb[1]{{\mathbb     #1}}
\newcommand\C[1]{{\mathcal #1 }}
\newcommand\fk[1]{\mathfrak{#1}}
\newcommand\ovl[1]{\overline{#1}}
\newcommand\mat[2]{\begin{pmatrix}#1\\#2\end{pmatrix}}
\newcommand\be[1]{\begin{#1}}
\newcommand\ee[1]{\end{#1}}
\newcommand\disp{\displaystyle}
\newcommand\pmat{\begin{pmatrix}}
\newcommand\epmat{\end{pmatrix}}
\newcommand\CO{{\mathcal O}}
\newcommand\bC{{\mathbb C}}
\newcommand\bN{{\mathbb N}}
\newcommand\bZ{{\mathbb Z}}
\newcommand\ep{{\epsilon}}
\newcommand\sig{{\sigma}}
\newcommand\fg{{\mathfrak g}}
\newcommand\fh{{\mathfrak h}}
\newcommand\frk{{\mathfrak k}}
\newcommand\fm{{\mathfrak m}}
\newcommand\fn{{\mathfrak n}}
\newcommand\fp{{\mathfrak p}}
\newcommand\ft{{\mathfrak t}}
\newcommand\wti[1]{\widetilde{#1} }
\newcommand\wht{\widehat }
\newcommand\ie{{i.e. ~}}
\newtheorem{theorem}{Theorem}[section]
\newtheorem{corollary}[theorem]{Corollary}
\newtheorem{conjecture}[theorem]{Conjecture}
\newtheorem{definition}[theorem]{Definition}
\newtheorem{example}[theorem]{Example}
\newtheorem{lemma}[theorem]{Lemma}
\newtheorem{proposition}[theorem]{Proposition}
\newtheorem{remark}[theorem]{Remark}
\newcommand\Ad{{\operatorname{Ad}}}
\newcommand\ad{{\operatorname{ad}}}
\newcommand\Ann{{\operatorname{Ann}}}
\newcommand\Ind{{\operatorname{Ind}}}
\begin{document}
\title[Dirac series for complex classical Lie groups]{Dirac series for complex classical Lie groups: A multiplicity-one theorem}
\author{Dan Barbasch}
\author{Chao-ping Dong}
\author{Kayue Daniel Wong}

\address[Barbasch]{Department of Mathematics, Cornell University, Ithaca, NY 14853,
U.S.A.}
\email{barbasch@math.cornell.edu}

\address[Dong]{School of Mathematical Sciences, Soochow University, Suzhou 215006,
P.~R.~China}
\email{chaopindong@163.com}

\address[Wong]{School of Science and Engineering, The Chinese University of Hong Kong, Shenzhen,
Guangdong 518172, P. R. China}
\email{kayue.wong@gmail.com}

\begin{abstract}
This paper computes the Dirac cohomology $H_D(\pi)$ of irreducible
unitary Harish-Chandra modules $\pi$ of complex classical groups viewed as
real reductive groups. More precisely, unitary representations with
nonzero Dirac cohomology are shown to be unitarily induced from
unipotent representations. When nonzero, there is a unique, multiplicity free $K-$type in $\pi$
contributing to $H_D(\pi)$. This confirms conjectures formulated by the first named author and Pand\v zi\'c in 2011.

\end{abstract}

\maketitle
\setcounter{tocdepth}{1}

\section{Introduction}\label{sec:intro}
The Dirac operator was first introduced in the representation theory
of real reductive groups by Parthasarathy \cite{P1, P2} and Schmid in
order to give
geometric realization of the discrete series. A byproduct, the Dirac
inequality, has proved very useful to provide necessary conditions for
unitarity. In the case of real rank one groups, the work of \cite{Baldoni}
and \cite{BB}, shows that this necessary condition is also
sufficient. The Dirac inequality plays a crucial role in the
determination of representations with $(\fk g,K)-$cohomology in the work
of \cite{E} and \cite{VZ}  for complex and real groups, subsequently
expanded by \cite{Sa} to find necessary and sufficient conditions for
the unitarity of irreducible representations with regular integral
infinitesimal character.

In order to find sharper estimates for the spectral gap in the case of
locally symmetric spaces, Vogan in \cite{V2}
introduced the notion of {\bf Dirac cohomology} for irreducible
representations. He  formulated a conjecture on its relationship with
the  infinitesimal character of the representation.

\smallskip

We recall the construction of Dirac operator and Dirac cohomology.
Let $G$ be a connected real reductive Lie group. Fix a Cartan
involution $\theta,$ and write $K:=G^{\theta}$ for the  maximal
compact subgroup. Denote by $\fg_0=\frk_0\oplus\fp_0$ the corresponding Cartan
decomposition of the Lie algebra $\fk g_0$, and $\fk g=\fk k+\fk p$
the corresponding decomposition of the complexification. Let $\langle\
,\rangle$ be an invariant nondegenerate form such that $\langle\ ,\
\rangle\mid_{\fk p_0}$ is positive definite, and $\langle\ ,\
\rangle\mid_{\fk k_0}$ is negative definite. Fix  $Z_1,\dots, Z_n$  an
orthonormal basis of $\fp_0$. Let
$U(\fg)$ be the universal enveloping algebra of $\fg$, and let $C(\fp)$
be the Clifford algebra of $\fp$ with respect to $\langle\,
,\,\rangle$. The {\bf Dirac operator}
$D\in U(\fg)\otimes C(\fp)$ is defined as
$$D=\sum_{i=1}^{n}\, Z_i \otimes Z_i.$$
The operator  $D$ does not depend on the choice of the
orthonormal basis $Z_i$ and  is $K$-invariant for the diagonal
action of $K$ induced by the adjoint actions on both factors.

Define  $\Delta: \frk \to U(\fg)\otimes C(\fp)$ by $\Delta(X)=X\otimes
1 + 1\otimes \alpha(X)$, where $\alpha:\frk\to C(\fp)$ is the
composition of $\ad:\fk k\longrightarrow \mathfrak{so}(\fk p)$ with the embedding  $\mathfrak{s}\mathfrak{o}(\fp)\cong\wedge^2(\fp)\hookrightarrow C(\fp)$. Write $\frk_{\Delta}:=\al(\frk)$, and denote by $\Omega_{\fg}$ (resp. $\Omega_{\frk}$)
the Casimir operator of $\fg$ (resp. $\frk$). Let $\Omega_{\frk_{\Delta}}$ be the image of $\Omega_{\frk}$ under $\Delta$. Then (\cite{P1})
\begin{equation}\label{D-square}
D^2=-\Omega_{\fg}\otimes 1 + \Omega_{\frk_{\Delta}} + (\|\rho_c\|^2-\|\rho_{\fg}\|^2) 1\otimes 1,
\end{equation}
where $\rho_{\fg}$ and $\rho_c$ are the corresponding half sums of positive roots of $\fg$ and $\frk$.

Let
$$
\widetilde{K}:=\{ (k,s)\in K\times {\rm Spin}(\fk p_0)\ :\ \Ad (k)=p(s)\},
$$
where $p: \text{Spin}(\fp_0)\rightarrow \text{SO}(\fp_0)$ is the spin double covering map. If $\pi$ is a
($\fg$, $K$)-module, and if $S_G$ denotes a spin module for
$C(\fp)$, then $\pi\otimes S_G$ is a $(U(\fg)\otimes C(\fp),
\widetilde{K})$ module.

The action of $U(\fg)\otimes C(\fp)$ is
the obvious one, and $\widetilde{K}$ acts on both factors; on $\pi$
through $K$ and on $S_G$ through the spin group
$\text{Spin}\,{\fp_0}$.
The Dirac operator acts on $\pi\otimes S_G$. The Dirac
cohomology of $\pi$ is defined as the $\widetilde{K}$-module
\begin{equation}\label{def-Dirac-cohomology}
H_D(\pi)=\text{Ker}\, D/ (\text{Im} \, D \cap \text{Ker} D).
\end{equation}

The following foundational result on Dirac cohomology, conjectured
by Vogan,  was proven by Huang and Pand\v zi\'c in 2002. Let $\fk h$
be a $\theta-$stable Cartan subalgebra with Cartan decomposition $\fk
h=\fk t+\fk a$ and $\fk t$ a Cartan subalgebra of $\fk k.$

\begin{theorem}[\cite{HP1} Theorem 2.3]\label{thm-HP}
Let $\pi$ be an irreducible ($\fg$, $K$)-module.
Assume that the Dirac
cohomology of $\pi$ is nonzero, and that it contains the $\widetilde{K}$-type with highest weight $\gamma\in\ft^{*}\subset\fh^{*}$. Then the infinitesimal character of $\pi$ is conjugate to
$\gamma+\rho_{c}$ under $W(\fg,\fh)$.
\end{theorem}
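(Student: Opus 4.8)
The plan is to follow Huang and Pand\v zi\'c: transport the problem to the algebra $U(\fg)\otimes C(\fp)$, use the identity \eqref{D-square} to compare central characters, and then upgrade a numerical identity to one of Weyl group orbits. First I would produce a convenient vector. Since $\pi$ is admissible and $S_{G}$ is finite dimensional, $\pi\otimes S_{G}$ is a direct sum of irreducible $\widetilde{K}$-modules; as $D$ is $\widetilde{K}$-equivariant, $\Ker D$ and $\image D$ are $\widetilde{K}$-submodules, hence semisimple, and $H_{D}(\pi)$ is a semisimple quotient of $\Ker D$ whose constituents all occur in $\Ker D$. Pick a $\widetilde{K}$-stable complement $C$ to $\image D\cap\Ker D$ inside $\Ker D$, so that $C\cong H_{D}(\pi)$; by hypothesis $C$ contains a copy of the $\widetilde{K}$-type $E_{\gamma}$ of highest weight $\gamma$. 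Let $v\in C$ be a highest weight vector of that copy. Then $Dv=0$ while $v\notin\image D$, since $C\cap\image D=0$, and $v$ is a highest weight vector of weight $\gamma$ for the diagonal subalgebra $\frk_{\Delta}$.

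As a first pass I would apply \eqref{D-square} to $v$. The operator $\Omega_{\fg}\otimes1$ acts on all of $\pi\otimes S_{G}$ by the scalar $\|\Lambda\|^{2}-\|\rho_{\fg}\|^{2}$, where $\Lambda\in\fh^{*}$ represents the infinitesimal character of $\pi$, and $\Omega_{\frk_{\Delta}}$ acts on $E_{\gamma}$ by the $\frk$-Casimir eigenvalue $\|\gamma+\rho_{c}\|^{2}-\|\rho_{c}\|^{2}$. Since $D^{2}v=0$, formula \eqref{D-square} collapses to $\bigl(\|\gamma+\rho_{c}\|^{2}-\|\Lambda\|^{2}\bigr)v=0$, so $\|\gamma+\rho_{c}\|=\|\Lambda\|$. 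This already constrains the infinitesimal character but is weaker than $W(\fg,\fh)$-conjugacy, so a finer argument is needed.

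For the finer argument I would use the differential $d(a)=Da-\epsilon(a)\,aD$ on the subalgebra $A:=\bigl(U(\fg)\otimes C(\fp)\bigr)^{\frk_{\Delta}}$, where $\epsilon(a)=\pm1$ records the parity of $a$ in the $\bZ/2$-grading coming from $C(\fp)$. Because $\Omega_{\fg}\otimes1$ is central in $U(\fg)\otimes C(\fp)$ and $a$ commutes with $\frk_{\Delta}$, the identity \eqref{D-square} gives $d^{2}(a)=[D^{2},a]=[\Omega_{\frk_{\Delta}},a]=0$ on $A$, and both $Z(\fg)\otimes1$ and $Z(\frk_{\Delta})$ lie in $A$ as $d$-cocycles. \emph{The main obstacle is this cohomology computation}: one must show that $Z(\frk_{\Delta})\hookrightarrow A$ induces an isomorphism onto $H(A,d)$, which is Vogan's conjecture; Huang and Pand\v zi\'c prove it by a PBW-filtration argument that reduces the statement to a Koszul-type exactness for the symbol of $D$. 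Granting this, the composite $Z(\fg)\otimes1\to H(A,d)\cong Z(\frk_{\Delta})$ is an algebra homomorphism $\zeta\colon Z(\fg)\to Z(\frk_{\Delta})$, and for each $z\in Z(\fg)$ there is $a\in A$ with
\[
z\otimes1=\zeta(z)+Da+aD .
\]
Evaluating at $v$ and using $Dv=0$ gives $(z\otimes1)v-\zeta(z)v=D(av)\in\image D$. But $z\otimes1$ acts on $\pi\otimes S_{G}$ by the infinitesimal character scalar $\chi_{\pi}(z)$, and $\zeta(z)\in Z(\frk_{\Delta})$ acts on $E_{\gamma}$ by a scalar, so $D(av)$ is a multiple of $v$; as $v\notin\image D$, that multiple is $0$. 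Hence $z$ and $\zeta(z)$ act on $v$ by the same scalar for every $z\in Z(\fg)$.

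It remains to identify $\zeta$. Under the Harish--Chandra isomorphisms $Z(\fg)\cong S(\fh)^{W(\fg,\fh)}$ and $Z(\frk)\cong Z(\frk_{\Delta})\cong S(\ft)^{W(\frk,\ft)}$, with $\ft^{*}\subset\fh^{*}$ because $\fh=\ft+\fa$, the map $\zeta$ is restriction of invariant polynomials from $\fh^{*}$ to $\ft^{*}$; explicitly, if $z\leftrightarrow p$ then $\zeta(z)$ acts on $E_{\gamma}$ by $p(\gamma+\rho_{c})$, where $\rho_{c}=\rho_{\frk}$. Combining with the previous paragraph, $p(\Lambda)=\chi_{\pi}(z)=p(\gamma+\rho_{c})$ for all $p\in S(\fh)^{W(\fg,\fh)}$, so Chevalley's restriction theorem forces $\Lambda$ and $\gamma+\rho_{c}$ into the same $W(\fg,\fh)$-orbit. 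Taking $z=\Omega_{\fg}$ here recovers the norm identity above as a consistency check, and the general case is the assertion of the theorem.
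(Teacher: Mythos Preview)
The paper does not give its own proof of this theorem; it is quoted from \cite{HP1} as a foundational input and used throughout without argument. Your sketch is a faithful outline of the Huang--Pand\v zi\'c proof: the choice of a highest weight vector $v\in\Ker D\setminus\image D$, the use of \eqref{D-square} for the norm identity, the differential $d$ on $(U(\fg)\otimes C(\fp))^{\frk_\Delta}$ with $d^2=0$, the identification $H(A,d)\cong Z(\frk_\Delta)$ (Vogan's conjecture), and the resulting homomorphism $\zeta\colon Z(\fg)\to Z(\frk_\Delta)$ realized as restriction under the Harish--Chandra isomorphisms. The deduction that $\chi_\pi(z)=p(\gamma+\rho_c)$ for all $z\leftrightarrow p$ and the appeal to Chevalley to conclude $W(\fg,\fh)$-conjugacy are correct; there is nothing to compare against in the present paper beyond the citation.
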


\subsection{Dirac Series} Denote by $\widehat{G}$ be the set of
equivalence classes of irreducible unitary $(\fg, K)$-modules. If $\pi \in \widehat{G}$,
then $\pi\otimes S_G$ acquires a natural inner
product, and $D$ is self-adjoint. As a result, Dirac
cohomology simplifies to
\begin{equation}\label{Dirac-unitary}
H_D(\pi)=\text{Ker}\, D=\text{Ker}\, D^2.
\end{equation}
For a unitary irreducible representation,  \eqref{D-square} is a
nonnegative scalar on any $\wti K$-type. If $\chi_\pi$ is the
infinitesimal character of $\pi,$ and $\tau$ is the highest weight of a $\widetilde{K}$-type in
$\pi\otimes S_G,$ then
\begin{equation}
  \label{Dirac-inequality}
  ||\chi_\pi||^2\le ||\tau + \rho_c||^2
\end{equation}
This is {\bf Parthasarathy's Dirac
operator inequality}. Moreover, by Theorem 3.5.2 of \cite{HP2}, the
equality holds precisely when $\tau$ is the highest weight of a $\widetilde{K}$-type
in $H_D(\pi)$ (see Section \ref{sec-spin}).

\vspace{3mm}
Let
$\wht{G}^d$ be the representations with nonzero Dirac cohomology. This
subset forms an interesting part of $\wht{G}$. For convenience, we
call these representations \textbf{Dirac series} of $G$ (terminology suggested
by J.-S. Huang).

\medskip
When $G$ is a complex Lie group
viewed as a real Lie group, a necessary condition for $\pi \in \widehat{G}^d$ is that
twice the infinitesimal character $\la$ of $\pi$ must satisfying the regular integral condition \eqref{eq-HPcomplex2} given in Section \ref{sec-spin}.  For this paper
  we adopt the following setting. We focus on the cases when the infinitesimal character
is {\bf regular half-integral} -- to emphasize, $2\la$ satisfies \eqref{eq-HPcomplex2} but $\la$ is
not integral. This is because in the case of $\la$ regular  integral,
these are unitary representations with
nontrivial $(\fk g,K)-$cohomology, and  the results in \cite{E} and
\cite{VZ} imply that any representation in $\widehat{G}^d$ is
unitarily induced from the trivial representation on a Levi
component. This is not the case for half-integral regular parameter.

\smallskip
We begin by determining the representations with half-integral regular parameter
which are unitary and not unitarily induced from any unitary
representation on a proper Levi component. This can be read off from
\cite{B1} and \cite{V1} for the classical groups, i.e. $GL(n,\mathbb{C}),$ $SO(n,\mathbb{C})$ and $Sp(2n,\mathbb{C})$. 
We give a self contained derivation of the unitary dual at half-integral regular infinitesimal character
for these groups, along with a brief discussion on the cases of genuine representations of the $Spin$ groups. 

\smallskip
For $GL(n,\mathbb{C}),$ these representations are just unitary characters.  Yet this is not the case
for the other classical groups. In \cite{B1}, a larger class of
representations is identified which are called the \textit{building blocks} of
the unitary dual in the sense that
\begin{itemize}
\item they are unitary and are not unitarily induced from unitary
  representations on proper Levi components,
\item any other unitary representation is obtained by unitary
  induction and continuous deformations  from unitarily induced
  modules (complementary series)
\end{itemize}
They turn out to have the additional property that the annihilator in the
universal enveloping algebra is maximal.  We call these
\textit{cuspidal unipotent representations}. Following \cite{BV}, we
consider a larger class of representations which we call
\textit{unipotent}. They have properties analogous to the
representations studied in \cite{BV} which are called \textit{special
  unipotent} and have the properties conjectured by Arthur in relation
to the residual spectrum of locally symmetric spaces.

\smallskip
A general discussion of the notion of \textit{unipotent representation} is
beyond the scope of this paper. We have included an explicit list for
the classical groups and a partial discussion
in Appendix A.  It is a paraphrase of \cite{B3} which identifies the representations
as iterated $\Theta$ lifts from one dimensional  representations.


\medskip
The following conjecture on $\widehat{G}^d$ was formulated in \cite{BP}:
\begin{conjecture}[\cite{BP} Conjecture 1.1] \label{conj:2}
Let $G$ be a connected complex simple Lie group and $\pi \in \widehat{G}$ whose
infinitesimal character is regular and half-integral. Then $\pi \in
\widehat{G}^d$ if and only if $\pi$ is parabolically induced from a unipotent
representation with nonzero Dirac cohomology, tensored with a unitary character.
\end{conjecture}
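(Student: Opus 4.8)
The plan is to prove Conjecture~\ref{conj:2} by reducing everything to an explicit computation on the finite list of unipotent representations, and then controlling how Dirac cohomology behaves under parabolic induction. Concretely, I would proceed in the following stages. \emph{Step 1: Classification of the relevant unitary dual.} Using the results of \cite{B1} and \cite{V1}, together with the explicit building-block analysis described above, I would first record that every $\pi\in\widehat G$ with regular half-integral infinitesimal character is unitarily induced from a representation of the form $\bigotimes_i \pi_i \otimes \chi$ on a $\theta$-stable Levi $\fl = \prod_i \fl_i \times (\text{split/central part})$, where each $\pi_i$ is a cuspidal unipotent representation (in the sense of the ``building blocks'') of a classical factor and $\chi$ is a unitary character. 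This is the self-contained derivation the introduction promises; the key point is that half-integral regularity forces the Levi factors carrying the ``interesting'' part to be of the specific types $GL$, $SO$, $Sp$ for which the building blocks are completely known.

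\emph{Step 2: Dirac cohomology of the building blocks.} For each cuspidal unipotent (building block) representation $\pi_i$ I would compute $H_D(\pi_i)$ directly. By Theorem~\ref{thm-HP} the only candidate $\wti K$-types are those $\gamma$ with $\gamma+\rho_c$ conjugate to the (half-integral, regular) infinitesimal character, and by the equality case of Parthasarathy's inequality \eqref{Dirac-inequality} one reduces to checking which such $\gamma$ actually occur in $\pi_i\otimes S_G$ with the right multiplicity. For complex groups the $\wti K$-type structure is governed by the lowest $K$-type data of the Langlands/Zhelobenko parameters, so this is a finite combinatorial check on each family; the outcome I expect is that exactly one building block per infinitesimal character has nonzero Dirac cohomology, and when nonzero $H_D$ is a single multiplicity-free $\wti K$-type. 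This is where the bulk of the case-by-case work lives, and it is the step I expect to be the main obstacle: one must pin down the precise $K$-type with multiplicities for each unipotent representation of $SO(n,\bC)$ and $Sp(2n,\bC)$ (and handle the genuine $Spin$ cases separately), and verify the spin-module tensoring combinatorics carefully.

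\emph{Step 3: Behavior under parabolic induction and tensoring by characters.} I would then prove that if $\pi = \Ind_P^G(\sigma\otimes\chi)$ with $\sigma$ a unipotent representation of the Levi, then $H_D(\pi)\neq 0$ if and only if $H_D(\sigma)\neq 0$, and in that case the contributing $\wti K$-type is again unique and multiplicity-free. The ``only if'' direction uses Theorem~\ref{thm-HP}: a nonzero $H_D(\pi)$ forces the infinitesimal character of $\pi$ (hence of $\sigma$, up to the $W$-action) to be $\gamma+\rho_c$, and combined with Step~1 and the classification of unitary representations with that infinitesimal character this forces $\pi$ to be induced from a \emph{unipotent} (not merely unitary) $\sigma$. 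The ``if'' direction is a Frobenius-reciprocity / $K$-type bookkeeping argument: the lowest $K$-type of the induced module, tensored with $S_G$, still meets the Huang--Pand\v zi\'c bound with equality, so the corresponding $\wti K$-type survives in $H_D(\pi)$; multiplicity-one is inherited from Step~2 because induction from $P$ adds no new $K$-types of the critical norm. Tensoring by a unitary character $\chi$ only shifts infinitesimal characters by an imaginary amount and does not affect the $\wti K$-type decomposition, so it changes nothing.

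\emph{Step 4: Assembly.} Putting Steps 1--3 together proves both directions of the equivalence in Conjecture~\ref{conj:2} and simultaneously yields the multiplicity-one statement of the abstract. One technical caveat I would flag and dispose of early: the conjecture is stated for simple $G$, so for the non-simple Levi factors appearing in Step~1 I would need the (routine) fact that $H_D$ of an external tensor product is the external tensor product of the $H_D$'s, together with $H_D$ of a unitary character of a complex torus being one-dimensional; this lets the per-factor analysis of Step~2 assemble into a statement about $\pi$ itself.
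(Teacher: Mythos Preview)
Your overall architecture matches the paper's: classify the unitary dual at half-integral regular infinitesimal character, analyze the unipotent building blocks, then control Dirac cohomology under induction. But your Step~3 glosses over the central difficulty, and Step~2 as you describe it is not strong enough to support Step~3. The claim ``induction from $P$ adds no new $K$-types of the critical norm'' is precisely what must be proved, and it is not routine Frobenius bookkeeping. The paper's argument (Proposition~\ref{prop-branching-Dirac}) uses $V_{\mathfrak{k}}(\rho)|_M\cong V_{\mathfrak{k}\cap\mathfrak{m}}(\rho_{\mathfrak m})\otimes\bb C_{\rho_{\fn}}\otimes{\textstyle\bigwedge}^\bullet\fn^*$ to write
\[
[\pi\otimes V_{\mathfrak{k}}(\rho):V_{\mathfrak{k}}(\tau')]=[\pi_{\fk m}\otimes V_{\mathfrak{k}\cap\mathfrak m}(\rho_{\fk m})\otimes \bb C_{\xi+\rho_{\fn}}\otimes{\textstyle\bigwedge}^\bullet\fn^*:V_{\mathfrak{k}}(\tau')|_M].
\]
The exterior algebra contributes a sum over all subsets $S\subset\Delta(\fn)$, and one must show exactly one $S$ survives the norm cutoff. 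This requires knowing not merely that $H_D(\pi_u)\neq 0$, but that \emph{every} $\widetilde{K\cap M}$-type in $\pi_u\otimes V_{\mathfrak{k}\cap\mathfrak m}(\rho_{\fk m})$ has highest weight of the form $\delta_M+\sum_{\gamma\in\Delta_M}m_\gamma\gamma$ with $m_\gamma\ge 0$ (Proposition~\ref{prop-positive-result}). Only with this positivity can one run the norm estimate: each candidate weight equals $\tau'$ plus a sum of roots in $\Delta'$, and dominance of $\tau'$ for $\Delta'$ forces that sum to vanish. Your Step~2 (``compute $H_D(\pi_i)$ directly'') does not yield this stronger structural statement; you need the full shape of $\pi_u\otimes V_{\mathfrak{k}\cap\mathfrak m}(\rho_{\fk m})$, not just its minimal-norm piece.

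Your ``only if'' direction is also incomplete. Step~1 shows $\pi$ is induced from \emph{some} unipotent $\sigma$, but there exist unipotent $\sigma$ with $H_D(\sigma)=0$ (e.g.\ $\pi_u^{even}$ in type $C_n$ with $n$ odd), and you must still show $H_D(\pi)=0$ in that case. The paper handles this by a separate parity argument on coordinate sums (end of Section~8), not by the HP theorem alone. Two smaller points: for complex groups the unitary character $\xi$ lives on the compact torus with integral differential and genuinely shifts the lowest $K$-type, so ``shifts infinitesimal characters by an imaginary amount and does not affect the $\wti K$-type decomposition'' is incorrect --- the interplay between the positive system $\Delta$ (for which $\xi$ is dominant) and $\Delta'$ (for which $\la$ is dominant) is a real ingredient in the argument; and the $K$-type contributing to $H_D$ is the \emph{spin}-lowest $K$-type, not the lowest $K$-type, so your ``lowest $K$-type meets the bound with equality'' needs adjustment.
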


Conjecture \ref{conj:2} generalizes to real reductive Lie groups, where unitary
induction is replaced by the more general cohomological induction in a range
where unitarity is preserved.
In the complex case, Parthasarathy's Dirac inequality \eqref{Dirac-inequality}
implies that all $\pi \in \widehat{G}$ with regular integral infinitesimal character
are unitarily induced from unitary characters of parabolic subgroups, and hence the conjecture
follows immediately.
\smallskip

Here is the list of all nontrivial unipotent representations
with half-integral regular infinitesimal characters for complex classical groups.
In all cases the representations have maximal primitive ideal. The
parameters are explicit, and fit in the parametrization in Appendix A.
Note that the ones in Type B, C and D are not
induced from unitary representations on proper Levi components.

\begin{description}
\item[Type $A_n$] The infinitesimal character satisfies
  \begin{equation}
  \label{eq:A.1}
2\la=(b-1,b-3,\dots ,a,a-1,\dots,-a+1,-a,\dots ,-b+3,-b+1),
\end{equation}
where we assume $b>a$.
The corresponding unipotent representation is spherical of the form
\begin{equation*}
  \pi_u=\Ind_{GL(a)\times GL(b)}^{GL(a+b)}\left(triv\otimes triv\right).
\end{equation*}
It is also the $\Theta-$lift of the trivial representation of
$GL(2b+1)$ to $GL(2a+2b+1)$.

\item[Type $B_n$] The infinitesimal character $\la$ satisfies
  \begin{equation}
  \label{eq:B.1a}
 2\la= (2b-1,2b-3,\dots,2a+3,2a+1,2a,2a-1,\dots,2,1).
\end{equation}
with $b \geq a$. The nilpotent orbit has columns $(2b+1,2a),$ and the
representation is the  $\Theta-$lift of the trivial representation of
$Sp(2a)$ to $SO(2b+2a+1).$

\item[Type $C_n$] The infinitesimal character satisfies
  \begin{equation}
  \label{eq:C.1}
  2\la=(2n-1,2n-3,\dots ,3,1).
\end{equation}
and there are two representations, the components of the
Segal-Shale-Weil representation. The nilpotent orbit has columns
$(2n-1,1)$ and the representations are the $\Theta-$lifts of the two
characters of $O(1)$ to $Sp(2n).$

\item[Type $D_n$] The infinitesimal character satisfies
  \begin{equation}
  \label{eq:D.1}
2\la=(2b-2,2b,\dots,2a+2,2a,2a-1,2a-2,\dots ,1,0)
\end{equation}
with $b\ge a.$ (When $b=a,$ the parameter is $(2a-1,2a-2,\dots
,1,0)$). There are two representations with maximal primitive
ideal. The nilpotent orbit has columns $(2b,2a-1,1)$ and the
representations are $\Theta-$lifts from the Segal-Shale-Weil
representations which in turn are $\Theta-$lifts of the characters of
$O(1)$. This is a case of two  iterations of $\Theta-$lifts from
1-dimensional representations.
\end{description}
 As already mentioned, the unitarily induced representations from the unipotent ones listed
 above are generalizations of the representations with nontrivial
 $(\fk g,K)-$cohomology. As far as locally symmetric spaces and the work of \cite{A},
 it is expected that they would provide new examples of local factors
 of automorphic forms.

We follow the same strategy in the case of the $Spin$ groups. Here are the parameters
of unipotent representations with half-integral regular infinitesimal characters:
\begin{description}
\item[$Spin(2n+1,\bb C)$] Apart from the infinitesimal characters in \eqref{eq:B.1a},
  \begin{equation}
  \label{eq:B.2}
 2\la= (2n-1,2n-3, \dots, 3,1)/2.
\end{equation}
\item[$Spin(2n,\bb C)$] Apart from the infinitesimal characters in
  \eqref{eq:D.1}, there is also
  \begin{equation}
  \label{eq:D.2}
2\la=(2n-1,2n-3, \dots, 3, \pm 1)/2
\end{equation}
\end{description}
Unlike the parameters in \eqref{eq:B.1a} and  \eqref{eq:D.1}, these parameters correspond to
{\it genuine} representations, i.e. they do not factor through $SO(2n+1,\bb C)$ or
$SO(2n,\bb C)$. Moreover, they have maximal primitive ideal, and are unitarily induced from a unitary
  character of a Levi component of type $A_{n-1}$. Note that half-integral
means $2\la$ is integral, not that the coordinates are half-integers.
 Consequently, just like the case of type A, one only needs to consider 
unitary characters for the genuine representations of Spin groups.

\medskip
We are now ready to state the unitarity
results in \cite{V1} and \cite{B1} for complex classical $G$:
\begin{theorem}[Theorem \ref{thm:unitarydual}] \label{thm:3}
Let $G$ be a classical complex Lie group. Any
$\pi \in \widehat{G}$ with regular, half-integral infinitesimal character is of the form
$$\pi := {\rm Ind}_{MN}^G((\mathbb{C}_{\xi} \otimes \pi_u) \otimes {\bf 1}),$$
where $P=MN$ is a parabolic subgroup of $G$ with Levi factor $M$, and
$\mathbb{C}_{\mu}$ is a unitary character on $M$. Moreover,
$\pi_u$ is either the trivial representation, or a unipotent
  representation with infinitesimal character given in \eqref{eq:A.1}
  -- \eqref{eq:D.1}.
\end{theorem}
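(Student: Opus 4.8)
The plan is to combine the Langlands classification for complex reductive groups with Knapp--Stein intertwining-operator theory and the determination of the unitary dual at real infinitesimal character due to \cite{B1, V1}, and then to extract precisely the representations living at a regular half-integral parameter. First I would write $\pi\in\widehat G$ as the Langlands quotient of a standard module $\Ind_{MN}^G(\delta\otimes e^{\nu})$ with $\delta$ tempered on the complex Levi $M$ and $\nu$ in the closed positive chamber; since a complex group has no discrete series, $\delta$ is an irreducible principal series and, after enlarging $N$, may be taken to be a unitary character $\mathbb C_\eta$ of $M$. When $\pi$ is unitary, the structure of the unitary dual established in \cite{B1, V1} shows $\pi$ is \emph{unitarily} induced from an irreducible unitary module $\sigma$ on a Levi admitting no further unitary induction --- a building block in the terminology recalled above. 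It therefore suffices to classify the building blocks whose infinitesimal character is regular and half-integral.

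Next I would use that a Levi of a classical complex group has the form $M\cong GL(n_1,\bC)\times\cdots\times GL(n_k,\bC)\times G_0$ with $G_0$ of the same type as $G$ (and absent when $G=GL$). By Vogan's description of $\widehat{GL(n,\bC)}$, the building blocks of $GL(n,\bC)$ are exactly the unitary characters: the Stein complementary series are positive-length deformations, hence not building blocks, and the Speh-type modules have singular infinitesimal character, hence cannot occur at a regular parameter. Consequently a building block of $M$ factors as $\mathbb C_\xi\otimes\sigma_0$ with $\mathbb C_\xi$ a unitary character of the $GL$-part and $\sigma_0$ a building block of $G_0$. For $G=GL(n,\bC)$ this already proves the theorem; note that the unipotent \eqref{eq:A.1} enters only as the $\pi_u=\Ind_{GL(a)\times GL(b)}^{GL(a+b)}(\mathrm{triv}\otimes\mathrm{triv})$ attached to a Levi $GL(a)\times GL(b)$ sitting inside a larger group, not as a building block of $GL(a+b,\bC)$ itself.

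For the remaining types I am reduced to classifying the building blocks of $G_0$. Twisting by a unitary character of $G_0$ I normalize the infinitesimal character $\lambda$ to be dominant, and regularity together with half-integrality --- the condition \eqref{eq-HPcomplex2} --- forces $2\lambda$ into one of the ``staircase'' shapes \eqref{eq:B.1a}, \eqref{eq:C.1}, \eqref{eq:D.1}. I then run the unitarity analysis of \cite{B1} (equivalently, classify the unitary modules of the associated graded affine Hecke algebra) and check that the building blocks occurring at such a parameter are precisely the cuspidal unipotent representations attached to the nilpotent orbits with columns $(2b+1,2a)$, $(2n-1,1)$, $(2b,2a-1,1)$; the $\Theta$-lift realizations in Appendix~A give an independent construction of these and make their unitarity and the maximality of their primitive ideals transparent. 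Assembling the three reductions yields $\pi=\Ind_{MN}^G(\mathbb C_\xi\otimes\pi_u)$ with $\mathbb C_\xi$ a unitary character of $M$ and $\pi_u$ either the trivial representation of the base factor or a unipotent representation with infinitesimal character \eqref{eq:A.1}--\eqref{eq:D.1}.

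The hard part will be the last reduction: showing that \emph{no other} Langlands quotient at a regular half-integral parameter is unitary and that the displayed list of unipotents is exhaustive among building blocks. This is an exhaustion over Langlands parameters together with a signature computation for the long intertwining operator (equivalently, a Hermitian-form computation for Hecke-algebra modules), and the bookkeeping is genuinely delicate precisely because the parameter is half-integral rather than integral: the shortcut available in the $(\fg,K)$-cohomology case --- where \cite{E, VZ} read the answer directly off the Dirac inequality --- no longer applies, and one needs the finer unitarity machinery of \cite{B1}. The Langlands and Knapp--Stein reductions and the passage to the graded Hecke algebra are standard, but have to be recorded with some care in order to keep the derivation self-contained.
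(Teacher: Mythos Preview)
Your outline correctly identifies the endpoint and the reduction to $GL$-factors, but it has two genuine gaps at the crucial step.

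First, the sentence ``regularity together with half-integrality forces $2\lambda$ into one of the staircase shapes \eqref{eq:B.1a}, \eqref{eq:C.1}, \eqref{eq:D.1}'' is false as stated. Regularity plus half-integrality only says $2\lambda$ is a regular integral weight; for $SO(2m+1,\bC)$ this could be, e.g., $\lambda=(5/2,3/2,1/2;4,2)$ or any of a huge family of parameters, not just the unipotent ones. (Also, the groups $G_0$ of type $B,C,D$ have no nontrivial unitary characters, so your ``twist by a unitary character of $G_0$'' is vacuous and cannot be used to normalize anything.) The staircase shapes emerge only \emph{after} the non-unitarity analysis eliminates everything else, not as an a priori constraint.

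Second, and more seriously, your ``hard part'' paragraph defers the entire content of the theorem to ``run the unitarity analysis of \cite{B1}'' without naming a single technique. The paper's proof is precisely a self-contained execution of this analysis, and it rests on three concrete tools you do not mention: (i) \emph{bottom-layer $K$-types} for the induced modules $I_1,I_2$ in Section~\ref{sec:bottom}, which transfer signature information from the $GL(\mu_r)$ and $G(\mu_0+\mu_1)$ factors to the full group; (ii) a distinguished set of \emph{cx-relevant $K$-types} on which the Hermitian form is shown to be indefinite whenever the parameter is not unipotent (Propositions~\ref{thm:unitb}, \ref{thm:unitc}, \ref{thm:unitd}); and (iii) a \emph{deformation argument}: one constructs a parabolic $P(\lambda)$ and an induced module $I_{P(\lambda)}$ sharing cx-relevant multiplicities with $J(\lambda,\lambda)$, deforms a string $\kappa_i$ or $\sigma_j$ by $t$, and uses semicontinuity of signatures together with an inductive analysis of the ``initial cases'' where the deformation terminates. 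None of this is visible in your proposal. In particular, the structural claim you invoke from \cite{B1,V1}---that every unitary $\pi$ is unitarily induced from a building block---is not a lemma one cites and then classifies building blocks separately; for these groups it is essentially equivalent to the theorem itself, and proving it \emph{is} the signature/deformation work of Sections~4--6.
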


By the paragraph after Equation \eqref{eq:A.1}, $\pi_u$ is
  induced from the trivial representation in Type A.  Using
  induction in stages, we will assume from now on that $\pi_u = triv$
  for Type A.

\smallskip
A self-contained proof of Theorem \ref{thm:3} for all classical groups is in Sections 3 to 6. The case of Theorem \ref{thm:3} for Spin groups is also discussed in Section \ref{sec:spinb} and \ref{sec:spind}.
When $\pi$ is not unitary, we will specify precisely on which $K-$types the Hermitian form is indefinite.
This will be useful in proving the analogous theorem for exceptional groups of Type $E$.

Using this, we will prove the following:
\begin{theorem} \label{thm:main1}
Conjecture \ref{conj:2} holds for complex connected classical Lie
groups and the Spin groups.
\end{theorem}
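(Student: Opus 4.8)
The plan is to deduce Theorem~\ref{thm:main1} from the classification of the unitary dual in Theorem~\ref{thm:3}, together with (i) a description of how Dirac cohomology behaves under unitary parabolic induction for complex groups, and (ii) an explicit computation of $H_D(\pi_u)$ for the unipotent building blocks. By Theorem~\ref{thm:3} and the remarks following it, every $\pi\in\widehat G$ with regular half-integral infinitesimal character is $\Ind_{MN}^G((\mathbb{C}_\xi\otimes\pi_u)\otimes{\bf 1})$ where $\pi_u$ is either $triv$ (in type $A$, after induction in stages), one of the unipotent representations \eqref{eq:B.1a}, \eqref{eq:C.1}, \eqref{eq:D.1} in types $B,C,D$, or --- in the $Spin$ case --- a genuine unitary character. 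These are the building blocks whose Dirac cohomology must be determined.

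First I would establish the induction step: for $P=MN$ a parabolic subgroup of the complex group $G$ with complex Levi $M$, and $\sigma$ a unitary $(\fm,M\cap K)$-module such that $\pi=\Ind_{MN}^G(\sigma\otimes{\bf 1})$ is irreducible, one has $H_D(\pi)\neq 0$ if and only if $H_D(\sigma)\neq 0$, and when both are nonzero the $\widetilde K$-type of $H_D(\pi)$ is the one built from $H_D(\sigma)$, with the same multiplicity. The mechanism is Parthasarathy's inequality \eqref{Dirac-inequality} and its equality case (Theorem~3.5.2 of \cite{HP2}): a $\widetilde K$-type can contribute to $H_D(\pi)$ only if, after adding $\rho_c$, its norm equals $\|\chi_\pi\|$, which forces it to be assembled from a lowest $K$-type of $\pi$; the $K$-types of $\Ind_{MN}^G(\sigma\otimes{\bf 1})$ are $\Ind_{M\cap K}^K(\sigma|_{M\cap K})$ by the compact form of Mackey's theorem, so they are controlled by the $(M\cap K)$-types of $\sigma$, and --- after accounting for the shift between $\rho_c$ for $G$ and for $M$ coming from $\fn\cap\frk$ --- the norm equality for $\pi$ reduces to that for $\sigma$. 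Tensoring $\sigma$ by a unitary character $\mathbb{C}_\xi$ of $M$ translates both $\chi_\sigma$ and the relevant $(M\cap K)$-type by $\xi$, hence changes neither the vanishing nor the multiplicity. This reduces Conjecture~\ref{conj:2}, in both directions and including the multiplicity-one assertion, to showing that for each building block $\pi_u$ above, $H_D(\pi_u)$ is either zero or carries a unique multiplicity-free $\widetilde K$-type.

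It then remains to carry out that computation. For every building block there is an explicit model --- spherical (or trivial) in type $A$, an iterated $\Theta$-lift of a one-dimensional representation in types $B,C,D$ (Appendix~A), a genuine character for $Spin$ --- and hence explicit $K$-type multiplicities, already needed for the unitarity analysis of Sections~3--6. Since $G$ is complex, $\fp\cong\frk$ as $K$-modules and the $\frk$-decomposition of the spin module $S_G$ is explicit, so the candidate $\widetilde K$-types in $\pi_u\otimes S_G$ are read off from the lowest $K$-types of $\pi_u$; by the equality case of \eqref{Dirac-inequality}, $\pi_u\in\widehat G^d$ exactly when the $\rho_c$-shifted norm of the minimal candidate equals $\|\chi_{\pi_u}\|$, and that candidate is the $\widetilde K$-type realised in $H_D(\pi_u)$. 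The partition combinatorics of the attached nilpotent orbits --- columns $(2b+1,2a)$ in type $B$, $(2n-1,1)$ in type $C$, $(2b,2a-1,1)$ in type $D$, and the analogous data in type $A$ and for $Spin$ --- makes these norm comparisons effective and, in each case, pins down the minimising $\widetilde K$-type uniquely and shows that it occurs with multiplicity one. Combining this with the induction step yields both directions of Conjecture~\ref{conj:2}, hence Theorem~\ref{thm:main1}.

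I expect this last computation to be the main obstacle. Producing \emph{some} $\widetilde K$-type that saturates the Dirac inequality on a given building block is comparatively easy; proving it is the \emph{only} one and has multiplicity one requires tight control of the entire $K$-spectrum of $\pi_u$, which is where the nilpotent-orbit combinatorics --- and, in the $Spin$ case, the separate bookkeeping for genuine $\widetilde K$-types --- does the real work. A secondary delicate point, using Theorem~\ref{thm:3} and \eqref{Dirac-inequality} in tandem, is the irreducibility of the induced modules and the verification that inducing and twisting by a unitary character neither creates nor destroys Dirac cohomology, so that the building-block analysis really does determine $\widehat G^d$ completely.
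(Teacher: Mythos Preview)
Your overall architecture matches the paper's: invoke Theorem~\ref{thm:3} to write every $\pi\in\widehat G$ with regular half-integral infinitesimal character as $\Ind_{MN}^G(\mathbb{C}_\xi\otimes\pi_u)$, compute $H_D(\pi_u)$ on the building blocks, and pass through parabolic induction. The gap is in how you handle the induction step.

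Your claim that ``the norm equality for $\pi$ reduces to that for $\sigma$'' via Mackey plus the $\rho_c$-shift is too optimistic as stated. The paper's reduction works as follows. Frobenius reciprocity and the decomposition $V_{\mathfrak{k}}(\rho)|_M \cong V_{\mathfrak{k}\cap\mathfrak{m}}(\rho_{\mathfrak{m}})\otimes\mathbb{C}_{\rho_{\mathfrak{n}}}\otimes\bigwedge^\bullet\mathfrak{n}^*$ give
\[
[\pi\otimes V_{\mathfrak{k}}(\rho):V_{\mathfrak{k}}(\tau')]=[\pi_{\mathfrak m}\otimes V_{\mathfrak{k}\cap\mathfrak{m}}(\rho_{\mathfrak{m}})\otimes\mathbb{C}_{\xi+\rho_{\mathfrak{n}}}\otimes{\textstyle\bigwedge^\bullet}\mathfrak{n}^*:V_{\mathfrak{k}}(\tau')|_M],
\]
with $\tau'=2\lambda-\rho'$. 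To collapse the right side to $[\pi_{\mathfrak m}\otimes V_{\mathfrak{k}\cap\mathfrak{m}}(\rho_{\mathfrak{m}}):V_{\mathfrak{k}\cap\mathfrak{m}}(\delta_M)]$ one needs a norm argument, and that argument requires, as input, that \emph{every} $\widetilde{K\cap M}$-type of $\pi_{\mathfrak m}\otimes V_{\mathfrak{k}\cap\mathfrak{m}}(\rho_{\mathfrak{m}})$ has highest weight of the form $\delta_M+\sum_{\gamma\in\Delta_M} m_\gamma\gamma$ with $m_\gamma\ge 0$. This is Proposition~\ref{prop-positive-result}, and it is \emph{not} just the statement that the spin-lowest $K$-type is unique and multiplicity-free: it is a positivity constraint on the full $\widetilde{K\cap M}$-spectrum of $\pi_u\otimes V_{\mathfrak{k}\cap\mathfrak{m}}(\rho_{\mathfrak{m}})$. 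Only with this in hand can one show that every candidate weight differs from $\tau'$ by a sum of roots in the positive system $\Delta'$ making $\tau'$ dominant, and hence has strictly larger norm unless it equals $\tau'$. So the ``tight control of the $K$-spectrum'' you flagged for the building-block step is precisely what the induction step consumes; the two are not independent, and your sketch does not supply this ingredient.

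Second, your clean ``$H_D(\pi)\neq 0$ iff $H_D(\sigma)\neq 0$'' is not what the norm argument alone delivers. When $H_D(\pi_u)=0$ (the off-parity oscillator components in types $C$ and $D$), the norm estimate still leaves a nearest weight $\tau'+{\bf e}_i$ in play; the paper kills it by a parity count on the sum of coordinates (roots of types $C$, $D$ have even coordinate sum, so $\tau'+{\bf e}_i$ and $\tau'$ cannot be linked by the allowed weights). Your sketch does not account for this, and without it the ``only if'' direction of Conjecture~\ref{conj:2} is not established.
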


\subsection{Spin-lowest $K-$type} Following \cite{D1}, we are interested
in studying \textbf{spin-lowest $K-$type}
(spin-LKT) of an admissible $(\fg,K)-$module. See Definition \ref{def-spinnorm}
for the precise meaning of
spin-lowest $K-$type in the setting of complex Lie groups.
If $\pi \in \widehat{G}^d$, then the spin-lowest $K-$types are precisely
those contributing to $H_D(\pi).$ More explicitly,
let $\tau$ be the highest weight of the $\wti{K}-$type occurring $H_D(\pi)$. Then
$$
[V_{\mathfrak{k}}(\tau)\ :\ H_D(\pi)]=\sum_{\eta\  {\rm spin-LKT}} [V_{\mathfrak{k}}(\eta) : \pi]\cdot [V_{\mathfrak{k}}(\eta) \otimes S_G\ :\ V_{\mathfrak{k}}(\tau)],
$$
where $V_{\mathfrak{a}}(\eta)$ is the irreducible, finite-dimensional $\mathfrak{a}-$module with highest weight $\eta$.
In view of this, the following conjecture, formulated in \cite{BP}, makes $\wht{G}^d$ and $H_D(\pi)$ precise.
\begin{conjecture}[\cite{BP} Conjecture 4.1 and J.-S. Huang] \label{conj:multone}
Let $G$ be a connected complex simple Lie group, and $\pi \in \widehat{G}^d$.
Then $\pi$ has a unique spin-lowest $K-$type $V_{\mathfrak{k}}(\eta)$ which occurs with multiplicity one.
\end{conjecture}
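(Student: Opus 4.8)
The plan is to reduce Conjecture~\ref{conj:multone} to a finite, explicit computation by peeling off the inducing data supplied by Theorem~\ref{thm:main1}. By that theorem (i.e.\ Conjecture~\ref{conj:2}) every $\pi\in\widehat G^d$ has the form $\pi=\Ind_{MN}^G((\bC_\xi\otimes\pi_u)\otimes\mathbf 1)$ with $\bC_\xi$ a unitary character of the Levi $M$ and $\pi_u$ a unipotent representation with $H_D(\pi_u)\neq 0$; by the paragraph following Theorem~\ref{thm:3} we may take $\pi_u=triv$ on every Type~$A$ factor, so that the nontrivial part of $\pi_u$ lives on a classical (or $Spin$) group of the same type as $G$ with infinitesimal character one of \eqref{eq:A.1}--\eqref{eq:D.2}. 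The first step is to record that for $\pi\in\widehat G^d$ the spin-lowest $K$-types are exactly the $K$-types $V_{\mathfrak k}(\eta)$ of $\pi$ whose spin norm (Definition~\ref{def-spinnorm}) equals $\|\chi_\pi\|$ --- equivalently, those achieving equality in Parthasarathy's inequality \eqref{Dirac-inequality}, by Theorem~3.5.2 of \cite{HP2} --- and that this characterization is compatible with (a) tensoring by a unitary character, and (b) unitary parabolic induction. Compatibility with (a) is immediate: $\bC_\xi\otimes\pi_u$ has the same $M\cap K$-types as $\pi_u$ up to a fixed translation, and its infinitesimal character and the relevant norms translate by the same amount, so its set of spin-lowest $K$-types, counted with multiplicity, is in canonical bijection with that of $\pi_u$.

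For (b) I would use the $\theta$-stable decomposition $\fp=\fp_M\oplus(\fn\oplus\ovl{\fn})$, which factors the spin module as $S_G\cong S_M\otimes\wedge^\bullet(\fn)$ as $M\cap K$-modules (up to the standard $\rho(\fn)$-twist), together with Frobenius reciprocity for the restriction of $\Ind_{MN}^G(\sigma)$ to $K$. The expected outcome is that the minimal spin norm of $\pi$ equals that of $\sigma=\bC_\xi\otimes\pi_u$, and that the spin-lowest $K$-types of $\pi$ are the images of those of $\sigma$ under a single, multiplicity-preserving ``bottom layer'' map, the extreme $\pm\rho(\fn)$-weight line of $\wedge^\bullet(\fn)$ accounting for the shift. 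Granting this, Conjecture~\ref{conj:multone} for a general $\pi\in\widehat G^d$ follows from the same statement for the unipotent $\pi_u$; and the $Spin$ case with the genuine parameters \eqref{eq:B.2} and \eqref{eq:D.2} collapses entirely to the $GL$ case, since there (as noted after \eqref{eq:D.2}) $\pi_u$ is itself unitarily induced from a unitary character of a type $A_{n-1}$ Levi, so the reduction applies directly and the problem becomes that of unitary characters.

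It then remains to prove the statement for each unipotent representation on the list. For the trivial representation the spin-lowest $K$-type is the trivial $K$-type, which is unique and occurs with multiplicity one. For the nontrivial ones I would use the realizations recorded in Appendix~A and in \cite{B1,B3}: in Type~$A$ the spherical module $\Ind_{GL(a)\times GL(b)}^{GL(a+b)}(triv\otimes triv)$, and in Types~$B,C,D$ the descriptions as (summands of) explicit degenerate principal series, equivalently as iterated $\Theta$-lifts of one-dimensional representations. From either description the $K$-spectrum of $\pi_u$ is computable with multiplicities --- via Frobenius reciprocity in the first case, via the classical branching rules for the relevant dual pair in the second. One then minimizes the spin norm over this $K$-spectrum (for a complex group, following \cite{D1}, the spin norm of $V_{\mathfrak k}(\mu)$ is obtained by passing from $\mu-\rho_c$ to its dominant $W$-conjugate and adding $\rho_c$ back) subject to the interlacing and parity constraints coming from the dual-pair branching. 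Since $\pi_u\in\widehat G^d$, the minimum is forced to equal $\|\chi_{\pi_u}\|$, which together with the equality case of \eqref{Dirac-inequality} should pin down the minimizing weight.

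The main obstacle is precisely this last optimization in Types~$B$, $C$, $D$ (and the $Spin$ groups): showing that the spin-norm minimum over the $K$-spectrum of $\pi_u$ is attained at a \emph{single} highest weight and with \emph{multiplicity one} in $\pi_u$. Two failure modes must be excluded --- a tie between several dominant weights at the minimal value, and a minimizing $K$-type occurring in $\pi_u$ with multiplicity $\ge 2$. For the multiplicity I plan to exploit that the degenerate series and $\Theta$-lifts in question are multiplicity free over the appropriate maximal (or symmetric) subgroup on the range of $K$-types near the bottom of the spin norm; for the tie I would argue directly from the explicit coordinates of $2\chi_{\pi_u}$ in \eqref{eq:A.1}--\eqref{eq:D.1}, showing that the two constraints ``the dominant conjugate of $\mu-\rho_c$, shifted back by $\rho_c$, has norm $\|\chi_{\pi_u}\|$'' and ``$V_{\mathfrak k}(\mu)$ occurs in $\pi_u$'' have a unique common solution. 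A uniform bookkeeping of all the cases --- including keeping track of the two (or more) components occurring in Type~$C$, Type~$D$ and the $Spin$ groups --- is where the bulk of the work lies, and once the unique spin-LKT $V_{\mathfrak k}(\eta)$ with multiplicity one is in hand, the displayed formula in the ``Spin-lowest $K$-type'' subsection also pins down $H_D(\pi)$ as a single $\wti K$-type, recovering the statement of the abstract.
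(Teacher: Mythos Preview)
Your overall architecture matches the paper's: reduce via Theorem~\ref{thm:main1} to $\pi=\Ind_{MN}^G(\bC_\xi\otimes\pi_u)$, handle the twist trivially, push the problem down to $M$ using Frobenius reciprocity together with the factorization $V_{\mathfrak k}(\rho)\big|_M\cong V_{\mathfrak{k}\cap\mathfrak m}(\rho_{\mathfrak m})\otimes\bC_{\rho_{\mathfrak n}}\otimes\bigwedge^\bullet\mathfrak n^*$, and then treat each $\pi_u$ by hand from its explicit $K$-spectrum. That is exactly the route taken in Sections~7--8.

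There is, however, a real gap in your step~(b). You phrase the expected output as ``spin-LKTs of $\pi$ are the bottom-layer images of spin-LKTs of $\sigma$, with the extreme weight line of $\bigwedge^\bullet\mathfrak n$ giving the shift'' and then grant it. But Frobenius plus the spin-module factorization does \emph{not} yield this from the hypothesis ``$\pi_u$ has a unique spin-LKT with multiplicity one'' alone. What the paper actually needs (and proves, Proposition~\ref{prop-positive-result}) is the stronger \emph{positivity}: every $\widetilde{K\cap M}$-type occurring in $\pi_u\otimes V_{\mathfrak{k}\cap\mathfrak m}(\rho_{\mathfrak m})$ has highest weight $\delta_M+\sum_{\gamma\in\Delta_M} m_\gamma\gamma$ with $m_\gamma\ge 0$. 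Only with this input does the norm computation in Proposition~\ref{prop-branching-Dirac} force a single term to survive: one writes an arbitrary contributing weight as $\tau'+\sum m_\gamma\gamma+\sum_{\beta'\in S_1}\beta'-\sum_{\beta\in S_2}\beta$ with $\gamma,\beta',-\beta\in\Delta'$, and since $\tau'$ is $\Delta'$-dominant all inner products are $\ge 0$, pinning down $\delta_M'=\delta_M$ and $S=\Delta(\mathfrak n)\cap\Delta'$. Without the positivity on the $\pi_u$-side, a $\delta_M'$ lying \emph{below} $\delta_M$ in the root order could combine with some non-extremal piece of $\bigwedge^\bullet\mathfrak n^*$ to hit $V_{\mathfrak k}(\tau')$ and spoil multiplicity one.

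So your case-by-case analysis of $\pi_u$ must establish more than you state: not only that the spin-norm minimizer is unique and has multiplicity one, but that \emph{every} PRV component of $V_{\mathfrak k}(\eta')\otimes V_{\mathfrak{k}\cap\mathfrak m}(\rho_{\mathfrak m})$, for $\eta'$ ranging over the $K$-spectrum of $\pi_u$, dominates $\delta_M$ in the root order. Fortunately this comes out of the same explicit coordinate computation you already planned (and is carried out in the proof of Proposition~\ref{prop-positive-result}); the $K$-spectra listed in Theorem~\ref{thm:unitarydual} are multiplicity free, which handles your second failure mode for free. Once you make this strengthening explicit, your sketch becomes the paper's proof.
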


Here is the second main result of this paper:
\begin{theorem} \label{thm:main}
Conjecture \ref{conj:multone} holds for complex connected classical Lie groups and the Spin groups.
\end{theorem}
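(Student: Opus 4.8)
The plan is to combine Theorem \ref{thm:main1} with the explicit description of $K$-types produced in Sections 3 to 6. By Theorem \ref{thm:main1}, every $\pi\in\widehat{G}^d$ has the form $\Ind_{MN}^G(\mathbb{C}_\xi\otimes\pi_u\otimes{\bf 1})$, where $\mathbb{C}_\xi$ is a unitary character of the complex Levi $M$ and $\pi_u$ is either the trivial representation or one of the unipotent representations with parameters \eqref{eq:A.1}--\eqref{eq:D.1} (respectively \eqref{eq:B.2}, \eqref{eq:D.2} in the Spin case), and moreover $\pi_u$ has nonzero Dirac cohomology. I would first reduce Conjecture \ref{conj:multone} to the case $\pi=\pi_u$, and then settle each unipotent case by an explicit spin-norm minimization; since the spin-lowest $K$-types of $\pi$ are, by the discussion preceding Conjecture \ref{conj:multone}, precisely the $K$-types of $\pi$ contributing to $H_D(\pi)$, this is exactly what is needed.

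For the reduction step, note that for a complex group the spin norm of Definition \ref{def-spinnorm} and the $\widetilde{K}$-type decomposition of $V_{\mathfrak{k}}(\mu)\otimes S_G$ depend only on the root datum of $\frk\cong\fg$. Frobenius reciprocity gives $[V_{\mathfrak{k}}(\mu):\pi]=[V_{\mathfrak{k}}(\mu)|_{K\cap M}:\mathbb{C}_\xi\otimes\pi_u]$, and the spin norm relative to $\fg$ differs from the one relative to the Levi $\fm$ only by the fixed shift by $\rho(\fn)$, while tensoring with $\mathbb{C}_\xi$ merely translates highest weights. The one subtlety here is to check that this $\rho(\fn)$-shift is compatible with $\frk$-dominance, so that the spin-lowest $K$-types of $\pi$ are exactly the expected translates of those of $\pi_u$, with the same multiplicities. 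Granting this, it suffices to prove that each relevant $\pi_u$ has a single spin-lowest $K$-type, occurring with multiplicity one. (Alternatively, since Sections 3 to 6 already determine the full $K$-spectrum of every $\pi\in\widehat{G}^d$, the minimization below can be run directly for $\pi$; the reduction is mainly an organizational convenience.)

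For the unipotent representations I would argue type by type, using the realization of $\pi_u$ in Appendix A as an iterated $\Theta$-lift of a one-dimensional representation, together with the branching laws recorded in Sections 3 to 6. In Type $A$ one may take $\pi_u$ trivial, and the claim is a short computation with the $K$-spectrum of a unitary minimal principal series of $GL(n,\mathbb{C})$; the genuine Spin parameters \eqref{eq:B.2} and \eqref{eq:D.2} reduce to this, since by the discussion preceding Theorem \ref{thm:3} the corresponding representations are unitarily induced from unitary characters of type $A_{n-1}$ Levi subgroups, while the remaining Spin representations are inflated from $SO(n,\mathbb{C})$ and hence covered by Types $B$ and $D$. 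For Types $B$, $C$, $D$ the $K$-types $V_{\mathfrak{k}}(\mu)$ of $\pi_u$ are known explicitly; for each of them one computes the spin norm $\|\mu\|_{\mathrm{spin}}$, and Parthasarathy's inequality \eqref{Dirac-inequality} together with the equality criterion of \cite{HP2} identifies the spin-lowest $K$-types as precisely those with $\|\mu\|_{\mathrm{spin}}=\|\chi_{\pi_u}\|$. One then checks that there is a unique such $\mu=\eta$ and that $[V_{\mathfrak{k}}(\eta):\pi_u]=1$; feeding this into the multiplicity formula displayed before Conjecture \ref{conj:multone} also shows that $H_D(\pi_u)$ equals the single $\widetilde{K}$-type $V_{\mathfrak{k}}(\tau)$ with $\tau+\rho_c$ conjugate to $\chi_{\pi_u}$, again with multiplicity one.

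The main obstacle is the uniqueness of the spin-lowest $K$-type for the unipotent representations of Types $B$, $C$, and $D$. Here $\pi_u$ has infinitely many $K$-types, and one must show that the spin norm --- obtained by shifting $\mu$ by $-\rho$ and folding back into the dominant Weyl chamber, hence piecewise quadratic in $\mu$ --- attains its minimum over this family at exactly one weight, with no secondary coincidence that would raise the multiplicity. This is combinatorially heaviest in Type $D$, i.e. for the unipotent representation with parameter \eqref{eq:D.1}, where $\pi_u$ is a two-fold iterated $\Theta$-lift and the associated nilpotent orbit has three columns $(2b,2a-1,1)$; the explicit determination in Sections 3 to 6 of the $K$-spectrum and of the $K$-types on which the invariant Hermitian form is indefinite is precisely the input needed to control this minimization.
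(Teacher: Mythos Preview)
Your reduction step has a genuine gap. You write that ``the spin norm relative to $\fg$ differs from the one relative to the Levi $\fm$ only by the fixed shift by $\rho(\fn)$,'' and then grant that the spin-lowest $K$-types of $\pi$ are the expected translates of those of $\pi_u$. Neither statement is justified. The spin norm $\|\{\eta-\rho\}+\rho\|$ involves folding by the Weyl group of $G$, which does not decompose as a $\rho(\fn)$-shift of the $M$-folding. More seriously, the $K$-types of the induced module $\pi=\Ind_M^G(\bb C_\xi\otimes\pi_u)$ are governed by branching $V_{\frk}(\eta)|_{K\cap M}$, so a single $K$-type of $\pi$ receives contributions from many $K\cap M$-types, and there is no direct bijection between $K$-types of $\pi$ and of $\pi_u$. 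Even granting uniqueness of the spin-lowest $K\cap M$-type for $\pi_u$, nothing prevents \emph{other} $K\cap M$-types of $\pi_u$, after tensoring with the wedge powers coming from the induction, from producing additional $K$-types of $\pi$ with the minimal spin norm.

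The paper closes exactly this gap, and the key ingredient is stronger than what you propose to prove for $\pi_u$. Rather than tracking spin-lowest $K$-types through induction, the paper computes the single multiplicity $[\pi\otimes V_{\frk}(\rho):V_{\frk}(2\la-\rho)]$ directly (Conjecture \ref{conj:sharp}), via Frobenius reciprocity and the decomposition $V_{\frk}(\rho)|_M\cong V_{\frk\cap\fm}(\rho_{\fm})\otimes\bb C_{\rho_{\fn}}\otimes\bigwedge^\bullet\fn^*$. For this to reduce to a computation on $M$, one needs not merely uniqueness of the spin-lowest $\widetilde{K\cap M}$-type in $\pi_u\otimes V_{\frk\cap\fm}(\rho_{\fm})$, but the \emph{positivity} statement of Proposition \ref{prop-positive-result}: every other $\widetilde{K\cap M}$-type occurring there has highest weight of the form $\delta_M+\sum_{\gamma\in\Delta_M} m_\gamma\gamma$ with $m_\gamma\ge 0$. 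This root-lattice positivity is what allows the norm estimate of Proposition \ref{prop-branching-Dirac} to rule out all other contributions. Your case-by-case minimization for $\pi_u$ would need to be upgraded to prove this positivity, and your reduction argument replaced by the explicit Frobenius computation of Section 8; the ``subtlety'' you flag and then grant is in fact the heart of the matter. (Your remark that Sections 3--6 determine the full $K$-spectrum of every $\pi\in\widehat G^d$ is also not accurate: those sections classify the unitary dual and give $K$-spectra only for the unipotent $\pi_u$, not for the general induced modules.)
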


We believe that Theorems \ref{thm:main1} and \ref{thm:main} should hold 
for all complex reductive groups. 
Indeed, based on the results in \cite{DD, D2, DW}, these theorems are shown to be true 
for exceptional groups of type $G_2$, $F_4$, $E_6$ and $E_7$. We give full details on the case of complex $E_8$ in a forthcoming work.

\bigskip

The manuscript is organized as follows. Section 2 includes some preliminary results on complex simple Lie groups,
Dirac cohomology and spin-lowest $K-$types. Sections 3--6 state the classification of the unitary dual for complex
classical Lie groups with half integral regular infinitesimal
character (cf. \cite{B1}, \cite{V1}) and gives complete proofs.  Section 7 proves a stronger version of Conjecture
\ref{conj:multone} for unipotent representations, which  is essential for the determination of
$H_D(\pi)$ in Section 8. In Appendix A, we give an overview of unipotent representations for complex classical Lie groups.
Finally, in Appendix B, we present some calculations on \texttt{atlas} (\cite{ALTV}, \cite{At}) for the modules appearing in Sections 4--6, offering examples for the results in these sections.

\section{Preliminaries}
Let $G$ be a connected complex simple Lie group viewed as a real Lie group. Fix a
maximal compact subgroup $K$ and a Borel subgroup $B$.
Then $T := K \cap B$ is a maximal torus in $K$.

Denote by $\mathfrak{t}_0$ the Lie algebra of $T$. Then $\mathfrak{a}_0 := \sqrt{-1} \mathfrak{t}_0$ is a maximally
split Cartan subalgebra of $\mathfrak{g}_0$. Let $A := \mathrm{exp}(\mathfrak{a}_0)$. Then
$H = TA$ is a Cartan subgroup of $G$ with Lie algebra $\mathfrak{h}_0=\fk t_0+\fk a_0$.

The realization of the complexification of $\mathfrak{g}_0$ in  (2.1.3) -- (2.1.7) of \cite{B1} gives
\begin{equation}\label{identifications}
\mathfrak{g}\cong \mathfrak{g}_{0} \oplus
\mathfrak{g}_0, \quad
\mathfrak{h}\cong \mathfrak{h}_{0} \oplus
\mathfrak{h}_0,\quad \mathfrak{t} \cong \{(x,-x) : x\in
\mathfrak{h}_{0} \}, \quad \mathfrak{a} \cong\{(x, x) : x\in
\mathfrak{h}_{0} \}
\end{equation}
(we drop the subscripts of the Lie algebras to denote their complexifications).

Let $\rho$ be the half sum of positive roots
in $\Delta_G^{+}$. A choice of positive roots of $\fk g$ is
$$
\Delta^+(\fg, \fh)=\{ \al\times 0\}\cup \{0\times (-\al)\}_{\al\in \Delta_G^+}.
$$

Denote by $W$ the Weyl group $W(\fg_0,
\fh_0)$, which has identity element $e$ and longest element
$w_0$. Then $W(\fg, \fh)\simeq W \times W$.

\subsection{Classification of irreducible modules}
The classification of irreducible $(\fk g,K)$-modules for complex Lie
groups was first obtained by Parthasarathy-Rao-Varadarajan \cite{PRV}
and Zhelobenko \cite{Zh}. Let $(\lambda_{L}, \lambda_{R})\in
\fh_0^{*}\times \fh_0^{*}$ be such that $\lambda_{L}-\lambda_{R}$ is
a weight of a finite dimensional holomorphic representation of $G$.
Using \eqref{identifications}, we can view $(\lambda_L, \lambda_R)$ as a real-linear functional on $\fh$ (we will also sometimes denote it as $\mat{\la_L}{\la_R}$), and write $\bC_{(\lambda_L, \lambda_R)}$ as the character of $H$ with differential $(\lambda_L, \lambda_R)$ (which exists) with
$$
\bC_{(\lambda_L, \lambda_R)}|_{T}= \bC_{\mu} := \bC_{\lambda_L-\lambda_R}, \quad \bC_{(\lambda_L, \lambda_R)}|_{A}= \bC_{\nu} :=\bC_{\lambda_L+\lambda_R}.
$$

Put $X(\lambda_{L}, \lambda_{R})
:=K\mbox{-finite part of Ind}_{B}^{G}
(
\bC_{(\lambda_L, \lambda_R)} \otimes {\bf 1}
)
.$

\begin{theorem}\label{thm-Zh} {\rm (\cite{PRV}, \cite{Zh})}
The $K-$type with extremal weight $\mu:=\lambda_{L}-\lambda_{R}$
occurs with multiplicity one in
$X(\lambda_{L}, \lambda_{R})$. Let
$J(\lambda_L,\lambda_R)$ be the unique subquotient of
$X(\lambda_{L}, \lambda_{R})$ containing this
$K-$type.
\begin{itemize}
\item[a)] Every irreducible admissible ($\fg$, $K$)-module is of the form $J(\lambda_L,\lambda_R)$.
\item[b)] Two such modules $J(\lambda_L,\lambda_R)$ and
$J(\lambda_L^{\prime},\lambda_R^{\prime})$ are equivalent if and
only if there exists $w\in W$ such that
$w\lambda_L=\lambda_L^{\prime}$ and $w\lambda_R=\lambda_R^{\prime}$.
\item[c)] $J(\lambda_L, \lambda_R)$ admits a nondegenerate Hermitian form if and only if there exists
$w\in W$ such that $w(\lambda_L-\lambda_R) =\lambda_L-\lambda_R , w(\lambda_L+\lambda_R) = -\overline{(\lambda_L+\lambda_R)}$.
\end{itemize}
\end{theorem}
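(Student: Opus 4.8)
The plan is to reduce each assertion to a statement about the principal series $X(\lambda_L,\lambda_R)=\text{Ind}_B^G(\mathbb{C}_{(\lambda_L,\lambda_R)}\otimes\mathbf{1})$ and its $K$-type structure. First I would establish the multiplicity-one claim: by Frobenius reciprocity, $[V_{\mathfrak{k}}(\mu'):X(\lambda_L,\lambda_R)]=\dim\text{Hom}_T(\mathbb{C}_\mu,V_{\mathfrak{k}}(\mu')|_T)=$ the dimension of the $\mu$-weight space of $V_{\mathfrak{k}}(\mu')$, where $\mu=\lambda_L-\lambda_R$. Since $\mu$ is the extremal (highest, after a Weyl-chamber choice) weight of $V_{\mathfrak{k}}(\mu)$, it occurs with multiplicity one there and does not occur in any $V_{\mathfrak{k}}(\mu')$ with $\mu'$ not $W$-conjugate to a dominant weight $\succeq\mu$; a short highest-weight argument shows $\mu$ occurs in $V_{\mathfrak{k}}(\mu')$ with multiplicity one exactly when $\mu'$ is $W$-conjugate to $\mu$, forcing $V_{\mathfrak{k}}(\mu)$. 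Hence there is a unique irreducible subquotient $J(\lambda_L,\lambda_R)$ containing $V_{\mathfrak{k}}(\mu)$, and it is well-defined.

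For part (a), I would invoke the subquotient theorem (Casselman, or Harish-Chandra): every irreducible admissible $(\mathfrak{g},K)$-module embeds in some principal series $X(\lambda_L,\lambda_R)$; since over a complex group all characters of $H$ arise as $\mathbb{C}_{(\lambda_L,\lambda_R)}$ with $\lambda_L-\lambda_R$ a weight of a finite-dimensional holomorphic representation, and an irreducible submodule must contain its own lowest $K$-type which by the structure of $K$-types of principal series is forced to be the distinguished one $V_{\mathfrak{k}}(\mu)$, we get that the given irreducible module is $J(\lambda_L,\lambda_R)$. For part (b), equivalence of $J(\lambda_L,\lambda_R)$ and $J(\lambda_L',\lambda_R')$: the ``if'' direction follows because the intertwining operator attached to $w\in W$ gives $X(\lambda_L,\lambda_R)\sim X(w\lambda_L,w\lambda_R)$ up to the relevant subquotient (the infinitesimal character $(\lambda_L,\lambda_R)$ and the Langlands data match); the ``only if'' direction follows by comparing infinitesimal characters — equivalence forces $(\lambda_L,\lambda_R)$ and $(\lambda_L',\lambda_R')$ to be $W\times W$-conjugate, and then matching the distinguished $K$-type $\mu=\lambda_L-\lambda_R$ pins the conjugating element down to the diagonal $w\in W$.

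For part (c), the existence of a nondegenerate invariant Hermitian form on $J(\lambda_L,\lambda_R)$ is equivalent to $J(\lambda_L,\lambda_R)\cong\overline{J(\lambda_L,\lambda_R)}^{\,*}$, i.e. to $J(\lambda_L,\lambda_R)$ being isomorphic to its Hermitian dual. The Hermitian dual of $X(\lambda_L,\lambda_R)$ is the principal series attached to the character $(-\overline{\lambda_R},-\overline{\lambda_L})$ (complex-conjugate inverse, with the left/right roles swapped by the duality on $G$ viewed as real); translating into the $(\mu,\nu)$-coordinates with $\mu=\lambda_L-\lambda_R$, $\nu=\lambda_L+\lambda_R$, the condition that the distinguished subquotients agree becomes, via part (b), the existence of $w\in W$ with $w\mu=\mu$ and $w\nu=-\overline{\nu}$. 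I expect the main obstacle to be the careful bookkeeping in part (c): correctly identifying the Hermitian dual of a principal series for a complex group realized as a real group, keeping track of how complex conjugation interacts with the identifications in \eqref{identifications} and with the swap of $\lambda_L,\lambda_R$, and then verifying that the resulting conjugacy condition is exactly the stated one rather than an a priori weaker condition on the full induced module. The multiplicity-one and classification parts (a), (b) are more routine given the subquotient theorem and Frobenius reciprocity.
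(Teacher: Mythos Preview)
The paper does not supply a proof of this theorem; it is stated with attribution to \cite{PRV} and \cite{Zh} and used as a black box throughout. Your outline is essentially the standard route found in those references (and in later expositions such as Duflo or Knapp \cite{Kn}), so there is nothing to compare against in the paper itself.

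Two small comments on your sketch. In the multiplicity-one step your sentence ``$\mu$ occurs in $V_{\mathfrak{k}}(\mu')$ with multiplicity one exactly when $\mu'$ is $W$-conjugate to $\mu$'' is stronger than what is needed and is not literally true: $\mu$ may well appear as an interior weight of $V_{\mathfrak{k}}(\mu')$ for $\mu'\succ\{\mu\}$. All you need from Frobenius reciprocity is that the $\mu$-weight space of $V_{\mathfrak{k}}(\{\mu\})$ is one-dimensional, which is immediate since $\mu$ is extremal there; that already gives $[V_{\mathfrak{k}}(\mu):X(\lambda_L,\lambda_R)]=1$, and uniqueness of the subquotient containing it follows. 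Second, for the ``if'' direction of (b) you do not need intertwining operators in the complex case: any $w\in W=N_K(T)/T$ has a representative in $K$, and conjugation by that representative carries the inducing character $\mathbb{C}_{(\lambda_L,\lambda_R)}$ to $\mathbb{C}_{(w\lambda_L,w\lambda_R)}$, giving a literal $K$-equivariant isomorphism $X(\lambda_L,\lambda_R)\cong X(w\lambda_L,w\lambda_R)$. Your identification of the delicate point---the Hermitian-dual bookkeeping in (c)---is accurate.
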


The $W\times W-$orbit of $(\lambda_L, \lambda_R)$ is the infinitesimal character of $J(\lambda_L, \lambda_R)$.

In general we normalize hermitian forms on irreducible modules
  to be positive on the lowest $K-$type. Occasionally we will say that
  the form is indefinite on a set of $K-$types, with the understanding
  that if one of them is a lowest $K-$type, then the form is
  normalized as stated above.

\subsection{PRV-component}
In this subsection, we summarize Corollaries 1 and 2 to Theorem 2.1 of \cite{PRV}
on the decomposition of the tensor product $V_{\mathfrak{k}}(\sigma_1)\otimes V_{\mathfrak{k}}(\sigma_2)$ for highest weights
$\sigma_1$ and $\sigma_2$.

\begin{theorem}\label{thm-PRV}{\it (\cite{PRV})}
The component $V_{\mathfrak{k}}(\{\sigma_1+w_0\sigma_2\})$ occurs exactly once in $V_{\mathfrak{k}}(\sigma_1)\otimes V_{\mathfrak{k}}(\sigma_2)$,
where $\{\sigma_1+w_0\sigma_2\}$ is the unique dominant element to which
$\sigma_1+w_0\sigma_2$ is conjugate under the action of $W$.
Moreover, any other component $V_{\mathfrak{k}}(\eta^\prime)$ occurring in $V_{\mathfrak{k}}(\sigma_1)\otimes V_{\mathfrak{k}}(\sigma_2)$ must be of the form
$$
\eta^\prime=\{\sigma_1+w_0\sigma_2\}+\sum_{i=1}^{l} n_i \alpha_i, \mbox{  where } n_i\in\bN.
$$
In particular,
$$
\|\{\sigma_1+w_0\sigma_2\}+\rho\| < \|\eta^\prime+\rho\|.
$$
\end{theorem}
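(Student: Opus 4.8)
\textbf{Proof plan for Theorem \ref{thm-PRV}.}
The statement is essentially a restatement of the two corollaries to Theorem 2.1 of \cite{PRV}, so the plan is to reduce everything to the combinatorics of weights of $V_{\fk k}(\sigma_1)\otimes V_{\fk k}(\sigma_2)$ together with the elementary fact that $\|\,\cdot\,\|$ increases along dominant directions. First I would recall that every irreducible constituent $V_{\fk k}(\eta')$ of the tensor product has highest weight $\eta'$ equal to a weight of the tensor product, hence of the form $\mu_1+\mu_2$ where $\mu_i$ is a weight of $V_{\fk k}(\sigma_i)$; writing $\mu_i=\sigma_i-\sum n_i^{(j)}\alpha_j$ with $n_i^{(j)}\in\bN$ (for $\mu_1$) and, using $w_0$ to flip, $\mu_2=w_0\sigma_2+\sum m^{(j)}\alpha_j$, one sees $\eta'=\sigma_1+w_0\sigma_2+\sum c_j\alpha_j$ with $c_j\in\bZ$. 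The point where the PRV argument is genuinely needed is that the \emph{lowest} such $\eta'$ in the root-order — namely $\{\sigma_1+w_0\sigma_2\}$, the dominant conjugate — actually occurs, and occurs exactly once; this is the content of their Theorem 2.1, proved by an explicit construction of a highest-weight vector (via a suitable Cartan component / cyclic vector argument inside the tensor product). I would cite this directly rather than reprove it.

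Granting existence and multiplicity one of the $\{\sigma_1+w_0\sigma_2\}$-component, the next step is to show every other component $\eta'$ satisfies $\eta' \in \{\sigma_1+w_0\sigma_2\} + \sum_i \bN\alpha_i$, i.e. the $c_j$ above are not merely integral but non-negative. This follows because $\{\sigma_1+w_0\sigma_2\}$ is the unique minimal element, with respect to the usual partial order $\preceq$ on $\ft^*$, among the dominant weights $\nu$ such that $V_{\fk k}(\nu)$ can appear: indeed $\sigma_1+w_0\sigma_2$ is itself a weight of the tensor product lying below every highest weight $\eta'$ of a constituent (since $\eta'$ is a dominant weight of the module and $\sigma_1+w_0\sigma_2 = \sigma_1 + w_0\sigma_2$ is obtained from $\eta'$ by subtracting positive roots, as any weight is $\succeq$ the "most negative" contribution), and passing to its dominant representative only moves it down in the order, so $\{\sigma_1+w_0\sigma_2\}\preceq \sigma_1+w_0\sigma_2 \preceq \eta'$. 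Hence $\eta' - \{\sigma_1+w_0\sigma_2\}=\sum_i n_i\alpha_i$ with $n_i\in\bN$, which is exactly the displayed formula.

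For the final norm inequality $\|\{\sigma_1+w_0\sigma_2\}+\rho\| < \|\eta'+\rho\|$, I would use the standard computation: writing $\beta:=\{\sigma_1+w_0\sigma_2\}+\rho$ (a dominant \emph{regular} weight since $\rho$ is strictly dominant) and $\eta'+\rho=\beta+\sum_i n_i\alpha_i$ with the $n_i\in\bN$ not all zero, one has
\[
\|\eta'+\rho\|^2 - \|\beta\|^2 = 2\Big\langle \beta,\ \sum_i n_i\alpha_i\Big\rangle + \Big\|\sum_i n_i\alpha_i\Big\|^2 \geq 2\sum_i n_i\langle\beta,\alpha_i\rangle > 0,
\]
where $\langle\beta,\alpha_i\rangle>0$ because $\beta$ is dominant regular and $\alpha_i$ is a simple root, and the quadratic term is $\geq 0$. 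This gives the strict inequality. The only real obstacle is the first step — the existence and multiplicity-one assertion for the $\{\sigma_1+w_0\sigma_2\}$-component — but since this is precisely \cite[Cor.~1--2 to Thm.~2.1]{PRV} I expect to invoke it as a black box; the remaining steps are the routine order-theoretic and norm estimates sketched above.
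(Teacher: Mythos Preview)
The paper does not give its own proof of this theorem; it is stated purely as a summary of Corollaries 1 and 2 to Theorem~2.1 of \cite{PRV}, so there is nothing in the paper to compare your argument against beyond that citation.

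Your step 3 (deriving the strict norm inequality from the $Q^+$ statement) is correct, and your step 1 (deferring existence and multiplicity one of the PRV component to \cite{PRV}) is exactly what the paper does. The problem is your step 2. You assert that ``passing to its dominant representative only moves it down in the order, so $\{\sigma_1+w_0\sigma_2\}\preceq \sigma_1+w_0\sigma_2 \preceq \eta'$''. The first inequality is backwards: for any weight $\mu$, the dominant conjugate satisfies $\{\mu\}\succeq \mu$, since a dominant weight dominates every element of its $W$-orbit. So your chain collapses at the very first link.

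There is no purely order-theoretic repair. Even granting $\eta'\succeq \sigma_1+w_0\sigma_2$ (which does hold, for instance because $V_{\fk k}(\sigma_1)\subset V_{\fk k}(\eta')\otimes V_{\fk k}(\sigma_2)^*$ forces $\sigma_1\preceq \eta'-w_0\sigma_2$), one cannot in general conclude from ``$\eta'$ dominant and $\eta'\succeq\mu$'' that $\eta'\succeq\{\mu\}$: already in $\fk{sl}_2$, take $\eta'=0$ and $\mu=-\alpha$; then $\eta'\succeq\mu$ but $\{\mu\}=\alpha\not\preceq\eta'$. The positivity statement $\eta'-\{\sigma_1+w_0\sigma_2\}\in\sum\bN\alpha_i$ genuinely uses the tensor-product structure (PRV's analysis of weight multiplicities and the explicit highest-weight vector for the minimal type), and you should invoke \cite{PRV} for this step as well rather than attempt the order-theoretic shortcut.
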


The factor $V_{\mathfrak{k}}(\{\sigma_1+w_0\sigma_2\})$ is usually called the {\bf PRV-component} of $V_{\mathfrak{k}}(\sigma_1)\otimes V_{\mathfrak{k}}(\sigma_2)$.

\subsection{Hermitian modules with Dirac cohomology}\label{sec-spin}
Let $\pi$ be an irreducible $(\fg,K)-$module for a complex Lie group $G$.
By Theorem \ref{thm-HP} and \eqref{identifications}, $\pi$ has Dirac cohomology
if and only if its Zhelobenko parameter $(w_1\la_L,w_2\la_R)$ satisfies
\begin{equation} \label{eq-HPcomplex}
\begin{cases}
  w_1\la_L-w_2\la_R=\tau +\rho\\
  w_1\la_L +w_2\la_R=0,
\end{cases}
\end{equation}
where $V_{\mathfrak{k}}(\tau)$ is a $\widetilde{K}-$type in $H_D(\pi)$. The second equation implies $\la_R=-w_2^{-1}w_1\la_L.$ Since $\tau
+\rho$ is regular integral, the first equation implies that
$2w_1\la_L$ is regular integral.

Write $\la=w_1\la_L.$ The module can be written as $\pi = J(\la,-s\la)$ with $2\la$ regular integral,
and the first equation of \eqref{eq-HPcomplex} implies that the {\it only} $\widetilde{K}-$type
that can appear in $H_D(\pi)$ is $V_{\mathfrak{k}}(2\la - \rho)$.
Furthermore, if $J(\la,-s\la)$ is Hermitian (e.g. if $J(\la,-s\la)$ is unitary),
it follows as in \cite{BP} that $s$ is an involution.

\medskip
Assume further that $\pi = J(\la,-s\la) \in \widehat{G}$, i.e. it is unitary. To relate the above arguments in
terms of Parthasarathy's Dirac inequality, note
that $V_{\mathfrak{k}}(\tau)$ is in $H_D(\pi)$ if and only if
\begin{equation} \label{eq-HPcomplex2}
2\la = \tau + \rho,
\end{equation}
which is precisely when the equality holds in \eqref{Dirac-inequality}. Moreover, if the
$K-$type $V_{\mathfrak{k}}(\eta)$ in $\pi$ contributes to $H_D(\pi)$, then by Theorem \ref{thm-PRV}
it must come from the PRV component of
$$V_{\mathfrak{k}}(\eta) \otimes S_G = 2^{[\frac{l}{2}]}\ V_{\mathfrak{k}}(\eta) \otimes V_{\mathfrak{k}}(\rho),$$
where the equality comes from Lemma 2.2 of \cite{BP}.
This leads to the following
definition given in \cite{D1}.
\begin{definition} \label{def-spinnorm}
The {\bf spin norm} of the $K-$type $V_{\mathfrak{k}}(\eta)$ is defined as
\begin{equation}\label{Spin-norm-K-type}
\|\eta\|_{\mathrm{spin}} := \| \{\eta-\rho\} + \rho \|
\end{equation}
For any irreducible admissible $(\fg, K)-$module $\pi$, we define
\begin{equation}\label{spin-norm-X}
\|\pi\|_{\mathrm{spin}}:=\min \|\eta\|_{\mathrm{spin}},
\end{equation}
where $\eta$ runs over all the $K-$types occurring in $\pi$.
A module  $V_{\mathfrak{k}}(\eta)$ is called a {\bf spin-lowest $K-$type} of $\pi$ if
it occurs in $\pi$ and
$\|\eta\|_{\mathrm{spin}}=\|\pi\|_{\mathrm{spin}}$.
\end{definition}

Using the terminology in Definition \ref{def-spinnorm}, the results of this section can be summarized as follows.

\begin{proposition}\label{prop-D-spin-lowest}
Let $\pi = J(\la,-s\la) \in \widehat{G}$ with $2\la$ regular integral, and $s\in
W$ an involution. Then $\|\pi\|_{spin} \geq \| 2\la \|$, and the equality holds
if and only if $J(\la,-s\la) \in \widehat{G}^d$.

In such cases, $H_D(\pi)$ consists of a single $\widetilde{K}-$type $V_{\mathfrak{k}}(2\lambda - \rho)$ with multiplicity
\begin{align*}
[V_{\mathfrak{k}}(2\lambda - \rho): H_D(\pi)] &= \sum_{\eta\ \text{spin}-LKT} [V_{\mathfrak{k}}(\eta): \pi]\cdot [V_{\mathfrak{k}}(\eta) \otimes S_G: V_{\mathfrak{k}}(2\lambda - \rho)] \\
&= 2^{[\frac{l}{2}]} \sum_{\eta\ \text{spin}-LKT} [V_{\mathfrak{k}}(\eta): \pi]\cdot [V_{\mathfrak{k}}(\eta) \otimes V_{\mathfrak{k}}(\rho): V_{\mathfrak{k}}(2\lambda - \rho)]\\
&= 2^{[\frac{l}{2}]}\sum_{\eta\ \text{spin}-LKT} [V_{\mathfrak{k}}(\eta): \pi].
\end{align*}
\end{proposition}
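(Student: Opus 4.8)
The plan is to assemble Proposition~\ref{prop-D-spin-lowest} from the three ingredients already laid out in Section~\ref{sec-spin}: Parthasarathy's Dirac inequality \eqref{Dirac-inequality}, the spin norm of Definition~\ref{def-spinnorm}, and the PRV decomposition (Theorem~\ref{thm-PRV}) of $V_{\fk k}(\eta)\otimes S_G$. First I would unwind what the inequality \eqref{Dirac-inequality} says for $\pi=J(\la,-s\la)$ unitary. The $\wti K$-types occurring in $\pi\otimes S_G$ are exactly the $V_{\fk k}(\tau)$ appearing in $V_{\fk k}(\eta)\otimes S_G=2^{[l/2]}V_{\fk k}(\eta)\otimes V_{\fk k}(\rho)$ for $\eta$ a $K$-type of $\pi$; by Theorem~\ref{thm-PRV} the smallest such $\tau+\rho$ in norm is $\tau=\{\eta-\rho\}$, with $\|\tau+\rho\|=\|\{\eta-\rho\}+\rho\|=\|\eta\|_{\mathrm{spin}}$, and all other $\tau$ have strictly larger $\|\tau+\rho\|$. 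Since the infinitesimal character norm $\|\chi_\pi\|$ equals $\|2\la\|$ in our normalization, \eqref{Dirac-inequality} applied to this minimal $\tau$ gives $\|2\la\|\le\|\eta\|_{\mathrm{spin}}$ for every $K$-type $\eta$ of $\pi$; minimizing over $\eta$ yields $\|2\la\|\le\|\pi\|_{\mathrm{spin}}$, which is the first assertion.

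Next I would establish the equivalence ``equality $\iff$ $\pi\in\wht G^d$''. For the forward direction: if $\|\pi\|_{\mathrm{spin}}=\|2\la\|$, pick a spin-LKT $V_{\fk k}(\eta)$; then the $\wti K$-type $V_{\fk k}(\{\eta-\rho\})$ sits in $\pi\otimes S_G$ and saturates \eqref{Dirac-inequality}, so by Theorem~3.5.2 of \cite{HP2} (quoted right after \eqref{Dirac-inequality}) it lies in $H_D(\pi)$, hence $H_D(\pi)\ne0$. Conversely, if $\pi\in\wht G^d$, the discussion around \eqref{eq-HPcomplex}--\eqref{eq-HPcomplex2} shows that any $\wti K$-type $V_{\fk k}(\tau)$ in $H_D(\pi)$ satisfies $2\la=\tau+\rho$, so $\tau=2\la-\rho$ is forced, and this $\tau$ arises from some $K$-type $V_{\fk k}(\eta)$ of $\pi$ via its PRV component $\{\eta-\rho\}=2\la-\rho$, i.e. $\{\eta-\rho\}+\rho=2\la$, whence $\|\eta\|_{\mathrm{spin}}=\|2\la\|$ and (combined with the already-proven inequality) $\|\pi\|_{\mathrm{spin}}=\|2\la\|$. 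This also pins down $H_D(\pi)$ to the single $\wti K$-type $V_{\fk k}(2\la-\rho)$.

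Finally, for the multiplicity formula I would compute $[V_{\fk k}(2\la-\rho):H_D(\pi)]$. Because $D$ is self-adjoint on the unitary module $\pi\otimes S_G$ we have $H_D(\pi)=\Ker D^2$ by \eqref{Dirac-unitary}, and on each $\wti K$-isotypic component $D^2$ acts by the scalar $\|\tau+\rho\|^2-\|2\la\|^2$ from \eqref{D-square}; this vanishes precisely for $\tau=2\la-\rho$, so $H_D(\pi)$ is the full $V_{\fk k}(2\la-\rho)$-isotypic subspace of $\pi\otimes S_G$, giving the first displayed equality. Substituting $S_G=2^{[l/2]}V_{\fk k}(\rho)$ (Lemma~2.2 of \cite{BP}) gives the second line. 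For the third line, I would argue that for $\eta$ a spin-LKT, $2\la-\rho=\{\eta-\rho\}$ is exactly the PRV component of $V_{\fk k}(\eta)\otimes V_{\fk k}(\rho)$, which by Theorem~\ref{thm-PRV} occurs with multiplicity exactly one; moreover if $\eta$ is a spin-LKT then $\|\eta\|_{\mathrm{spin}}=\|2\la\|$ forces $V_{\fk k}(2\la-\rho)$ to be this PRV component and not reachable as a non-PRV summand (which would have strictly larger norm), so $[V_{\fk k}(\eta)\otimes V_{\fk k}(\rho):V_{\fk k}(2\la-\rho)]=1$ for each such $\eta$, collapsing the sum to $2^{[l/2]}\sum_{\eta\ \text{spin-}LKT}[V_{\fk k}(\eta):\pi]$.

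The main obstacle is the bookkeeping in the third equality: one must be sure that (a) a $K$-type $\eta$ of $\pi$ that is \emph{not} a spin-LKT cannot contribute a copy of $V_{\fk k}(2\la-\rho)$ to $\pi\otimes S_G$ — which follows because any such contribution is either the PRV component (forcing $\|\eta\|_{\mathrm{spin}}=\|2\la\|$, contradiction) or a non-PRV summand (forcing $\|2\la-\rho+\rho\|>\|\{\eta-\rho\}+\rho\|=\|\eta\|_{\mathrm{spin}}\ge\|2\la\|$, also a contradiction) — and (b) for a genuine spin-LKT the relevant multiplicity is exactly $1$ rather than merely $\ge1$. Both are consequences of the sharp norm inequality in Theorem~\ref{thm-PRV} together with the equality case of \eqref{Dirac-inequality}, so the argument is essentially a careful chase through the norm bookkeeping rather than anything requiring new input.
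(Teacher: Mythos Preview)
Your argument is correct and is exactly the unpacking the paper intends: the proposition is stated there as a summary of Section~\ref{sec-spin} without a separate proof, and your derivation from the Dirac inequality \eqref{Dirac-inequality}, the equality criterion of \cite[Theorem~3.5.2]{HP2}, the PRV norm inequality of Theorem~\ref{thm-PRV}, and the identification $S_G=2^{[l/2]}V_{\fk k}(\rho)$ is precisely that summary made explicit. The only point worth tightening is the claim that a spin-LKT $\eta$ has $\{\eta-\rho\}=2\la-\rho$ (not merely equal norm): this follows because the PRV component $V_{\fk k}(\{\eta-\rho\})$ lies in $\ker D^2=H_D(\pi)$ and then Theorem~\ref{thm-HP} forces $\{\eta-\rho\}+\rho=2\la$, which you use implicitly but could state once.
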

Conjecture \ref{conj:multone} can be rephrased in the following sharper form. This is the main result of the paper in the case of groups of classical type.
\begin{conjecture}\label{conj:sharp}
Let $\pi = J(\lambda,-s\lambda) \in \widehat{G}$. Then
$$[\pi \otimes V_{\mathfrak{k}}(\rho)\ :\ V_{\mathfrak{k}}(2\lambda - \rho)] := \sum_{\kappa} [V_{\mathfrak{k}}(\kappa)\ :\ \pi]\cdot [V_{\mathfrak{k}}(\kappa) \otimes  V_{\mathfrak{k}}(\rho)\ :\ V_{\mathfrak{k}}(2\lambda - \rho)] = \begin{cases} 1 & \text{if }\pi \in \widehat{G}^d \\ 0 &\text{otherwise} \end{cases}.$$
Consequently, if $\pi \in \widehat{G}^d$, then $H_D(\pi) = 2^{[\frac{l}{2}]}V_{\mathfrak{k}}(2\lambda - \rho)$ by Proposition \ref{prop-D-spin-lowest}.
\end{conjecture}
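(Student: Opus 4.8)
The plan is to reduce Conjecture \ref{conj:sharp} to an assertion purely about branching multiplicities and then to verify it case by case along the classification in Theorem \ref{thm:3}. First I would observe that by Theorem \ref{thm:3} every $\pi = J(\lambda,-s\lambda)\in\widehat{G}$ with $2\lambda$ regular half-integral is of the form $\mathrm{Ind}_{MN}^G((\bC_\xi\otimes\pi_u)\otimes\mathbf 1)$, and that by Proposition \ref{prop-D-spin-lowest} the quantity $[\pi\otimes V_{\frk}(\rho):V_{\frk}(2\lambda-\rho)]$ counts (with $\wti K$-multiplicities absorbed into $2^{[l/2]}$) exactly the spin-lowest $K$-types of $\pi$, each with its multiplicity in $\pi$; so the claim is equivalent to saying $\|\pi\|_{\mathrm{spin}}\ge\|2\lambda\|$ always, with equality iff $\pi\in\widehat G^d$, and in the equality case the contributing $K$-types have total multiplicity $1$. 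The inequality and the first part of the equivalence are already in Proposition \ref{prop-D-spin-lowest}; the genuinely new content is the multiplicity-one statement.

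The core step is therefore a uniform multiplicity computation for the building blocks, carried out in Section 7 (the ``stronger version of Conjecture \ref{conj:multone} for unipotent representations''): for each unipotent $\pi_u$ in the list \eqref{eq:A.1}--\eqref{eq:D.1} (plus the trivial representation), one must identify the spin-lowest $K$-type(s) and show the relevant branching multiplicity is $1$ when $\pi_u\in\widehat G^d$ and $0$ otherwise. For the trivial representation this is immediate: $\|triv\|_{\mathrm{spin}}=\|\rho\|=\|2\lambda\|$ with $\lambda=\rho/2$ wait—more precisely the trivial representation has $2\lambda=2\rho$ at integral infinitesimal character, and the single $K$-type $V_{\frk}(0)$ gives $[V_{\frk}(0)\otimes V_{\frk}(\rho):V_{\frk}(2\rho-\rho)]=[V_{\frk}(\rho):V_{\frk}(\rho)]=1$. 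For the nontrivial unipotent representations one uses the explicit $\Theta$-lift descriptions recalled in Appendix A together with the explicit $K$-type decompositions available from \cite{B1}, \cite{B3}: one writes down the lowest $K$-types, computes their spin norms against $\|2\lambda\|$, and then evaluates $[V_{\frk}(\eta)\otimes V_{\frk}(\rho):V_{\frk}(2\lambda-\rho)]$ using Theorem \ref{thm-PRV} — in fact Theorem \ref{thm-PRV} says only the PRV component $\{\eta+w_0\rho\}=\{\eta-\rho\}$ (translated back) can contribute, so the multiplicity is controlled by whether $\{\eta-\rho\}+\rho = 2\lambda-\rho$... i.e. $\{\eta-\rho\}=2\lambda-2\rho$, reducing everything to a Weyl-group/weight-combinatorics check that the spin-LKT is unique and occurs once.

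Finally I would handle the induced representations by an induction-in-stages and ``lowest $K$-type of an induced module'' argument: for $\pi=\mathrm{Ind}_{MN}^G((\bC_\xi\otimes\pi_u)\otimes\mathbf 1)$ one relates the $K$-types of $\pi$ to $M\cap K$-types of $\bC_\xi\otimes\pi_u$ via Frobenius reciprocity, tracks how the spin norm behaves under (real) parabolic induction — here the key inequality is that $\|\mathrm{Ind}(\sigma)\|_{\mathrm{spin}}\ge$ the spin norm computed from $\sigma$ plus a contribution from the $A$-part of the infinitesimal character, with equality exactly when the $\bC_\xi$-twist is by the specific character making $2\lambda$ come out regular and the inducing data is itself a Dirac-series representation — and then invokes the multiplicity-one result for $\pi_u$ to conclude the total contributing multiplicity is still $1$. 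The main obstacle I expect is precisely this last step: controlling the spin-lowest $K$-types of a unitarily induced module and proving that no ``accidental'' extra $K$-types of the same spin norm appear after induction and after taking the Langlands/Zhelobenko subquotient $J(\lambda,-s\lambda)$ rather than the full induced module. This requires the fine information promised in the introduction — ``when $\pi$ is not unitary, we specify precisely on which $K$-types the Hermitian form is indefinite'' — so that one can rule out, for the non-Dirac-series $\pi$, that the putative spin-LKT survives in the irreducible quotient; this is where the detailed case analysis of Sections 3--6 is unavoidable, and where I would expect the bulk of the technical work to lie.
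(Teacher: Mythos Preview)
Your overall architecture (classify via Theorem \ref{thm:3}, handle the unipotent building blocks, then push through induction) matches the paper, but there is a genuine gap in the induction step and a missed strengthening in the unipotent step.

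For the unipotent representations you aim only at ``spin-LKT is unique and occurs once''. The paper proves more in Proposition \ref{prop-positive-result}: \emph{every} $\widetilde{K\cap M}$-type of $\pi_u\otimes V_{\frk\cap\fm}(\rho_{\fm})$ has highest weight of the form $\delta_M+\sum m_\gamma\gamma$ with $\gamma\in\Delta_M^+$ and $m_\gamma\ge 0$. This positivity, not just multiplicity one, is exactly what drives the induction step; without it your ``no accidental extra $K$-types after induction'' worry is well founded and you have no mechanism to resolve it. (Also, a small slip: the PRV condition is $\{\eta-\rho\}=2\lambda-\rho$, not $\{\eta-\rho\}+\rho=2\lambda-\rho$.)

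For the induced module the paper does \emph{not} track spin norms through Frobenius reciprocity in the way you sketch, and it does not use the signature information from Sections 3--6 at all (that is only for the classification theorem). Instead it uses the restriction formula $V_{\frk}(\rho)|_M \cong V_{\frk\cap\fm}(\rho_{\fm})\otimes\bC_{\rho_{\fn}}\otimes\bigwedge^\bullet\fn^*$ from \cite{BP} to write
\[
[\pi\otimes V_{\frk}(\rho):V_{\frk}(\tau')]=[\pi_u\otimes V_{\frk\cap\fm}(\rho_{\fm})\otimes\bC_{\xi+\rho_{\fn}}\otimes\textstyle\bigwedge^\bullet\fn^*:V_{\frk}(\tau')|_M],
\]
and then shows by a direct norm computation (Proposition \ref{prop-branching-Dirac}) that every weight arising on the left has the form $\tau'+(\text{sum of roots in }\Delta')$, hence has norm $\ge\|\tau'\|$, with equality forcing $\delta_M'=\delta_M$ and a unique choice of $S\subseteq\Delta(\fn)$. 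This reduces the count to $[\pi_u\otimes V_{\frk\cap\fm}(\rho_{\fm}):V_{\frk\cap\fm}(\delta_M)]$, which is $1$ by the unipotent case. The $H_D(\pi_u)=0$ case is handled by a parity argument on coordinate sums, not by signature considerations.
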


\section{Unitary Dual} \label{sec-dual}
We use the notation and terminology in the previous section. We determine the unitary representations $J(\lambda, -s\lambda)$
with $2\lambda$ regular and integral; as already mentioned,  $s$ must be an involution. The results were first proved in \cite{B1} and \cite{V1}, and can be summarized as follows.

\begin{theorem}[\cite{B1}, \cite{V1}] \label{thm:unitarydual}
Let $G$ be a classical complex Lie group. Any irreducible unitary representation
$\pi := J(\lambda, -s\lambda)$ of $G$ with $2\lambda$ regular and integral
must be of the form
$$\pi := {\rm Ind}_{LU}^G((\mathbb{C}_{\mu} \otimes \pi_u) \otimes {\bf 1} ),$$
where $P=LU$ is a parabolic subgroup of $G$ with Levi factor $L$, $\mathbb{C}_{\mu}$ is a unitary character of  $L$,
and $\pi_u$ is either the trivial representation, or one of the unipotent representations listed in \eqref{eq:B.1a} -- \eqref{eq:D.1} for Type B, C or D:
\begin{description}
\item[Type $B_n$] The spherical unipotent representations
$$\pi_u =  J\begin{pmatrix}-b+1/2,\dots , -1/2; -a,\dots , -1\\ -b+1/2,\dots , -1/2; -a,\dots , -1\end{pmatrix},
\quad 0 < a \le b \text{ integers and } a+b = n.$$
   It has $K-$spectrum
$$
V_{\mathfrak{k}}(\alpha_1, \alpha_1, \dots, \alpha_a, \alpha_a, \underbrace{0, \dots, 0}_{b-a}),\quad \alpha_1\geq \cdots \geq\alpha_a\geq 0.
$$
\item[Type $C_n$] The Oscillator representations
$$\pi_u^{even} = J\begin{pmatrix}-n+1/2,&\dots ,&-1/2\\-n+1/2,&\dots ,&- 1/2\end{pmatrix}
\text{ and } \pi_u^{odd} = J\begin{pmatrix}-n+1/2,&\dots ,&-1/2\\-n+1/2,&\dots ,& 1/2\end{pmatrix},$$
Their $K-$spectra are given by
$$V_{\mathfrak{k}}(2k,0,\dots ,0)\quad \text{and}\quad V_{\mathfrak{k}}(2k+1,0,\dots, 0), \quad k \geq 0$$
\item[Type $D_n$] The unipotent representations
\begin{align*}
\pi_u^{even} &= J\begin{pmatrix}
-a+1/2,&\dots ,&-3/2,&-1/2,&;-b+1,&\dots ,& 0\\
-a+1/2,&\dots ,&-3/2,&-1/2,&;-b+1,&\dots ,& 0
\end{pmatrix}
  \text{ and }\\
\pi_u^{odd} &= J\begin{pmatrix}
-a+1/2,&\dots ,&-3/2,&-1/2,&;-b+1,&\dots ,& 0\\
-a+1/2,&\dots ,&-3/2,& 1/2,&;-b+1,&\dots ,& 0
\end{pmatrix}
\end{align*}
  with $0 < a\le b$ integers and $a+b = n$. Their $K-$spectra are
$$
V_{\mathfrak{k}}(\alpha_1,  \dots,  \alpha_{2a}, \underbrace{0, \dots, 0}_{b-a}),\quad \alpha_1\geq \cdots \geq \alpha_{2a}\geq 0,\quad \sum_i \al_i \text{ is even/odd.}
$$
\end{description}
\end{theorem}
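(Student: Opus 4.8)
The plan is to follow the strategy of \cite{B1}: reduce an arbitrary Hermitian module $\pi=J(\la,-s\la)$ (with $2\la$ regular integral and $s\in W$ an involution, as arranged in Section~\ref{sec-spin}) by unitary induction to a short list of \emph{building blocks}, and then decide the unitarity of each building block by a signature computation on a controlled family of $K$-types. For the reduction, I would first use the $W\times W$ action to place $(\la,-s\la)$ in a standard position; the involution $s$ organizes the coordinates of $\la$ into blocks, and the pattern of which coordinates are moved by $s$ (and whether they are integral or genuinely half-integral) singles out a parabolic $P=LU$ whose Levi $L$ is a product of $GL$-factors and one classical simple factor of the same type as $G$, together with a unitary character $\bC_\mu$ of $L$, so that $\pi\cong\Ind_{LU}^G((\bC_\mu\otimes\pi_u)\otimes{\bf 1})$ with $\pi_u$ a module of the classical factor whose infinitesimal character is ``real'', i.e.\ admits no further splitting off of a unitary character. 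One must check that this induced module is irreducible and that unitarity passes in both directions: the forward implication is automatic, and for the converse one uses irreducibility of the relevant standard modules at regular infinitesimal character together with the fact that complementary series through $P$ stay nonunitary until they reach such an induced module. Iterating, the problem reduces to classifying the unitary building blocks $\pi_u$.

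For the nonunitarity half, each building block is the Langlands quotient of a spherical principal series -- or, in the nonspherical Type~C and Type~D cases, of a nearly spherical one -- at a real parameter, so its invariant Hermitian form is the one carried by the long intertwining operator $A(s_0,\la_0)$. I would factor $A(s_0,\la_0)$ into rank-one operators and restrict the form to the ``relevant'' $K$-types $V_{\mathfrak{k}}(\eta)$, those $\eta$ on which each rank-one factor acts by an explicit scalar that is a product of linear factors in the coordinates of $\la_0$, exactly as in \cite{B1}. Positivity of the form then becomes a finite system of inequalities on $\la_0$; a type-by-type analysis shows that the only regular half-integral solutions are the parameters in \eqref{eq:A.1}--\eqref{eq:D.1} (and \eqref{eq:B.2}, \eqref{eq:D.2} for the Spin groups), while for every other parameter one exhibits a single relevant $K$-type on which the form is indefinite. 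This is the content of Sections~3--6, which moreover record, when $\pi$ is nonunitary, the precise $K$-types on which the form fails to be positive, as needed later for Type $E$.

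For the converse, I would show the listed $\pi_u$ are unitary by identifying them with the iterated $\Theta$-lifts of trivial and sign characters described just before the theorem -- the two halves of the oscillator representation in Type~C, and their $\Theta$-lifts in Types~B and D -- which are unitary by Howe duality; alternatively, from the explicit $K$-spectra in the statement one verifies directly that the intertwining-operator form is positive definite, for instance by realizing $\pi_u$ as the endpoint of a complementary series issuing from the trivial representation. The $K$-spectra themselves follow either from the $\Theta$-lift realization or from computing the kernel of the spherical intertwining operator.

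The main obstacle is the nonunitarity step: uniformly across Types B, C, and D one has to pin down, for each ``bad'' region of parameters, a relevant $K$-type witnessing indefiniteness, and then match the surviving parameters with the nilpotent-orbit column data $(2b+1,2a)$, $(2n-1,1)$, and $(2b,2a-1,1)$. This bookkeeping, compounded by the combinatorics of writing a complex-group module as $J(\la,-s\la)$ with $s$ an involution and of keeping track of the $GL$-factors that absorb the unitary character, is the technical heart of Sections~3--6; by comparison the parabolic reduction and the positivity at the listed parameters are comparatively soft.
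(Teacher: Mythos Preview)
Your overall architecture is right and matches the paper's: reduce to a module on a smaller classical factor by stripping off $GL$-pieces carrying unitary characters, then decide unitarity of the residual piece by signatures on a controlled family of $K$-types, with the unitarity of the listed $\pi_u$ coming from $\Theta$-lifts. The difference is in how the nonunitarity step is executed. You propose the direct method of \cite{B1}: factor the long intertwining operator into rank-one pieces, restrict to relevant $K$-types on which each factor acts by an explicit rational scalar, and read off a system of inequalities. The paper instead runs a \emph{deformation/semicontinuity} argument (Section~\ref{subsec-strategy} and Sections~4--6): it organizes the spherical parameter into ``strings'' $\kappa_i,\sigma_j$, builds a parabolic $P(\la)$ and an induced module $I_{P(\la)}$ in which the cx-relevant $K$-types occur with the same multiplicity as in $J(\la,\la)$, and then deforms one string upward by $t$. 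Since multiplicities are constant for $0\le t<1/2$, signatures cannot change there; at $t=1/2$ either the induction hypothesis applies, the parameter becomes unitary (in which case an auxiliary nonspherical factor of $I_{P(\la_{1/2})}$ supplies a pair of cx-relevant $K$-types with opposite sign), or one can continue to $t\to\infty$ and invoke the Casimir inequality on the trivial and adjoint $K$-types.

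Two concrete points where your sketch underestimates the work. First, the reduction step in the paper is not done by abstractly reading off a parabolic from the involution $s$, but via bottom-layer $K$-types (Section~\ref{sec:bottom}): one forms $M_1,M_2$ from the lowest $K$-type $\mu$, shows (Proposition~\ref{p:lambdar}) that each $GL(\mu_r)$-piece must carry a unitary character, and then transfers indefiniteness on cx-relevant $K$-types back to $J(\la,-s\la)$ because these are bottom layer in $I_2$. Second, your claim that ``each rank-one factor acts by an explicit scalar'' on the relevant $K$-types is only literally true for petite/single-petaled $K$-types; the paper singles out $V_{\mathfrak{k}}(2,0,\dots,0)$ in Type~B as \emph{not} petite (Lemma~\ref{lem:bij}), and handles it by checking that the relevant $SL(2)_\alpha$-operators are still isomorphisms under the string conditions \eqref{eq:medium}--\eqref{eq:extraB}. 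The deformation method sidesteps ever needing a closed formula for the signature on such $K$-types, which is what it buys over the direct computation you outline.
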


\subsection{Bottom Layer $\mathbf{K-}$types}\label{sec:bottom}
We use the standard realizations of the classical groups and Lie algebras.
As in \cite{B2}, we will use the notion of relevant $K-$types to detect non-unitarity of
$\pi$.
\begin{definition}
The $K-$types $V_{\mathfrak{k}}(1,\dots ,1,0,\dots ,0,-1,\dots ,-1)$ with equal
number of $1$ and $-1$ for type A, and $V_{\mathfrak{k}}(1,\dots ,1,0,\dots ,0)$ and $V_{\mathfrak{k}}(2,1,\dots
,1,0,\dots ,0)$ in types B, C, D will be called \textbf{cx-relevant}.  The ones with coordinates $\pm 1$ only, will be called \textbf{fundamental cx-relevant}.
\end{definition}

We will make heavy use of bottom layer $K-$types as detailed in
\cite{KnV}. The special case of complex groups  is in Section 2.7 of \cite{B1}.
For the classical groups of Type B, C or D, the results in coordinates are as follows.
Write the lowest $K-$type of $J(\lambda,-s\lambda)$ as
$$
\mu=(\dots ,\underset{\mu_r}{\underbrace{r,\dots ,r}},\dots
,\underset{\mu_1}{\underbrace{1,\dots
    ,1}},\underset{\mu_0}{\underbrace{0,\dots ,0}}) = (\dots,
r^{\mu_r},\dots,1^{\mu_1},0^{\mu_0}).
$$
Let
$$
\begin{aligned}
&{M_1=\prod_{r\geq 1} GL(\mu_r) \times G(\mu_0)} && J_1=\bigotimes_{r\geq 1}
J_{GL(\mu_r)}(\lambda_L^r,\lambda_R^r) \otimes
J_{G(\mu_0)}(\lambda_L^0,\lambda_R^0)\\
&{M_2=\prod_{r \geq 2} GL(\mu_r) \times G(\mu_1 + \mu_0)} &&
J_2=\bigotimes_{r\geq 2} J_{GL(\mu_r)}(\lambda_L^r,\lambda_R^r) \otimes
J_{G(\mu_1 + \mu_0)}(\lambda_L^1 \cup \lambda_L^0,\lambda_R^1 \cup
\lambda_R^0)
\end{aligned}
$$
be Levi components of
real parabolic subalgebras containing the centralizer of $\mu,$ and
irreducible modules. Let
\begin{equation} \label{eq:inclusion}
I_1 := \Ind_{M_1}^G(J_1), \quad I_2:= \Ind_{M_2}^G(J_2)
\end{equation}
be induced modules containing $J(\la,-s\la)$. We only specify the information
on the Levi subgroup for parabolic induction when there is no danger of confusion.
Bottom layer $K-$types
are of the form $\mu_i=\mu+\mu_{M_i}$ where
$\mu_{M_i}$ are $K\cap M_i-$types in $J_i$  so that $\mu_i$
is dominant. They possess the  crucial property that the multiplicities and
signatures of $\mu_{M_i}$ on the $J_i$ and $\mu_i$ in the induced
modules in \eqref{eq:inclusion} and the lowest $K-$type factor $J$
coincide.  By Section 2.7 of \cite{B1}, some of the bottom layer $K-$types for $I_1$ are
obtained by adding $(1,\dots ,1,0,\dots ,0,-1,\dots , -1)$ (equal
number of $1$ and $-1$) to the coordinates equal to $r\ge 1$ in $\mu.$ In
addition one can add $(1,\dots ,1,0,\dots ,0)$ to the
  coordinates of $\mu$ equal to $0$; an even number in cases $C,D$.
For $I_2$, there are extra bottom layer
  $K-$types obtained by replacing the coordinates
  $(1^{\mu_1},0^{\mu_0})$ with
  $(2^{\mu_2'},1^{\mu_1'},0^{\mu_0'})$ which also denote a $K\cap M_2-$type
  coming from $J_{G(\mu_1+\mu_0)}$.

\subsection{Necessary Conditions for Unitarity}
\begin{proposition}
  \label{p:lambdar}
Assume that $\la$ is half-integral regular. The parameter
$(\lambda_L^r,\lambda_R^r)$ in \eqref{eq:inclusion}  for $r\ge 1$
consists of at most two strings,
\[\begin{pmatrix}
A,&\dots,&\frac{r}{2} +1, &\frac{r}{2}, &\frac{r}{2} -1, &\dots, &a\\
-a,&\dots,&-\frac{r}{2} +1, &-\frac{r}{2}, &-\frac{r}{2} -1, &\dots, &-A\\
\end{pmatrix},\]
and/or
\[\begin{pmatrix}
B,&\dots,&\frac{r+1}{2}, &\frac{r-1}{2}, &\dots, &b\\
-b,&\dots,&-\frac{r-1}{2}, &-\frac{r+1}{2}, &\dots, &-B\\
\end{pmatrix}
\]
only ($a+A=r,$ and $B+b=r$).
\end{proposition}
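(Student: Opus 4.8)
The plan is to start from the fact established in Section \ref{sec-spin}: since $J(\la,-s\la)$ is a Hermitian module whose parameter $\la$ is half-integral regular, the involution $s$ together with $\la$ determines, on the factor $GL(\mu_r)$ centralizing the coordinates equal to $r$, a parameter $(\lambda_L^r,\lambda_R^r)$ of the form $(\nu_r,-w_r\nu_r)$ for some involution $w_r$ in the symmetric group on $\mu_r$ letters, with $2\nu_r$ regular integral. The first step is therefore to translate the problem into a statement purely about $GL(n,\bC)$: a Hermitian irreducible module $J_{GL}(\nu,-w\nu)$ with $2\nu$ regular and \emph{genuinely} half-integral (no coordinate is an integer) must have its coordinates arranged into the two displayed strings. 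This is where the half-integrality hypothesis does its work: the coordinates of $2\nu$ are odd integers, so they split according to residue, and the requirement that $\nu$ appear together with $-w\nu$ (with $w$ an involution) forces the coordinates to come in $\pm$ pairs except possibly for those equal to $\pm r/2$.

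Next I would invoke the classification of the unitary/Hermitian dual of $GL(n,\bC)$ at half-integral infinitesimal character, as recorded in Theorem \ref{thm:unitarydual} and the discussion following \eqref{eq:A.1}: the only unitary building blocks in type $A$ are unitary characters, and the Hermitian (not necessarily unitary) modules that can occur inside a bottom-layer analysis of the type B/C/D module are, up to the induction bookkeeping, built from ``strings'' (Speh-type ladder pieces) of the shape $(A,A-1,\dots,a)$ on the left paired with $(-a,\dots,-A)$ on the right, and similarly for the other parity class. Concretely: the coordinates equal to $r$ in $\mu$ came from a single $GL$-factor in which $\la$ is a union of consecutive half-integers; the condition that $w$ be an involution pairing $\la_L^r$-entry with the negative of a $\la_R^r$-entry forces the entry-set to be a union of intervals of half-integers symmetric about $0$ after applying $w$, and counting residues mod $2$ of the doubled entries shows there are at most two such intervals — one whose doubled entries are $\equiv r \pmod 4$ (the $\frac r2$-centered string, parametrized by $a$ with $a+A=r$) and one whose doubled entries are $\equiv r+2\pmod 4$ (the $\frac{r\pm1}2$-centered string, parametrized by $b$ with $b+B=r$). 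The relations $a+A=r$ and $B+b=r$ are exactly the statement that each string, read off against the fixed ``level'' $r$, is centered so that its extreme left and right endpoints sum to $r$; I would derive these by writing down $\rho$-shift / extremal-weight constraints coming from the fact that $V_{\frk}(\mu)$ is the lowest $K$-type and $\mu$ has the coordinate $r$ with multiplicity $\mu_r$.

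The main obstacle I expect is the combinatorial bookkeeping that pins down \emph{exactly two} strings and no more, and in particular ruling out a third interval or an interval that is ``off-center'' relative to $r$. The key technical device will be Theorem \ref{thm-Zh}(c) applied inside the $GL(\mu_r)$ factor (the condition $w(\la_L-\la_R)=\la_L-\la_R$, $w(\la_L+\la_R)=-\overline{(\la_L+\la_R)}$ with $w$ an involution), combined with the observation that the left-minus-right difference on this factor must be (a permutation of) the constant vector $(r,\dots,r)$ because these coordinates all equal $r$ in the lowest $K$-type $\mu$. Intersecting ``difference is constant $=r$'' with ``sum is anti-fixed by an involution'' over half-integer strings leaves precisely the two displayed shapes, and I would finish by checking that each genuinely occurs (consistency), i.e. that the bound ``at most two'' is the best possible. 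I would organize the argument so that the B/C/D cases are uniform, handling the $G(\mu_0)$ factor and the parity subtleties in types $C$ and $D$ by a remark rather than a separate proof.
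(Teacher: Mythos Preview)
Your proposal has a genuine gap: you never make essential use of the \emph{unitarity} of the ambient module $J(\la,-s\la)$, and you try to squeeze the conclusion out of the Hermitian condition (Theorem~\ref{thm-Zh}(c)) plus combinatorics alone. That cannot work. On the $GL(\mu_r)$ factor, the conditions ``$\la_L^r-\la_R^r=(r,\dots,r)$'' and ``$\la_L^r+\la_R^r$ is $W$-conjugate to its negative'' are satisfied by many parameters that are \emph{not} of the displayed two-string shape---for instance, any Hermitian finite-dimensional representation tensored with $\det^{r}$, or products of several such pieces. Your parity-mod-$4$ bookkeeping is also the wrong split: the two strings correspond simply to the coordinates of $\la_L^r$ lying in $\bZ$ versus $\bZ+\tfrac12$, not to residues mod~$4$, and nothing in the Hermitian condition prevents each of these parity classes from containing more than one string.

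The paper's argument supplies exactly the missing leverage. One first observes that, because $2\la$ is regular integral, the $GL(\mu_r)$ module is unitarily induced irreducible from $J_e\otimes J_o$ on $GL_e\times GL_o\subset GL(\mu_r)$, where $J_e$ and $J_o$ are the Hermitian finite-dimensional pieces coming from the integer and half-integer coordinates respectively. Then one invokes the Casimir inequality (\cite[Lemma~12.6]{V1}): unless $J_e$ and $J_o$ are unitary \emph{characters}, the form on $J_{GL(\mu_r)}(\la_L^r,\la_R^r)$ is indefinite on the pair $V_{\frk\cap\mathfrak{gl}(\mu_r)}(r,\dots,r)$ and $V_{\frk\cap\mathfrak{gl}(\mu_r)}(r{+}1,r,\dots,r,r{-}1)$. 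These $K$-types are bottom layer for the induced module $I_1$ in \eqref{eq:inclusion}, so indefiniteness propagates to $J(\la,-s\la)$ itself. Unitarity of the ambient module therefore \emph{forces} $J_e$ and $J_o$ to be one-dimensional, and a unitary character with lowest $K$-type $(r,\dots,r)$ on each factor is precisely one string of the displayed form. The passage from the ambient module's unitarity to a constraint on the $GL$ factor via bottom-layer $K$-types and the Casimir/Dirac-type inequality is the heart of the proof; without it, the combinatorics you outline does not close.
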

\begin{proof}
The irreducible module
$J_{GL(\mu_r)}(\lambda_L^r,\lambda_R^r)$ in \eqref{eq:inclusion} has
$1-$dimensional lowest $K-$type $V_{\mathfrak{k} \cap \mathfrak{gl}(\mu_r)}(r,\dots,r)$. The condition that $2\la$ be regular
integral implies that $J(\la_L^r,\la_R^r)$ is unitarily induced
irreducible from a finite dimensional $J_e\times J_o$ of a Levi component $GL_{e}\times GL_{o}\subset GL(\mu_r)$, where the parameters of $J_e$ and $J_o$ come from the $\mathbb{Z}$ and $\mathbb{Z} + \frac{1}{2}$ coordinates of $J_{GL(\mu_r)}(\lambda_L^r,\lambda_R^r)$
respectively.

Note that by Theorem \ref{thm-Zh}(c), and the assumption that $J(\la_L,\la_R)$ has an invariant Hermitian form, both $J_e$ and $J_o$ have invariant Hermitian forms. Using Casimir's inequality \cite[Lemma 12.6]{V1}, unless $J_e$ and $J_o$ are unitary characters, otherwise $J_{GL(\mu_r)}(\lambda_L^r,\lambda_R^r)$ have indefinite form on $K-$types $V_{\mathfrak{k} \cap \mathfrak{gl}(\mu_r)}(r+1,r,\dots
,r,r-1)$ and $V_{\mathfrak{k} \cap \mathfrak{gl}(\mu_r)}(r,\dots ,r)$. Since these $K-$types are bottom layer in the induced modules \eqref{eq:inclusion}, $J$ is unitary only
if $J_{GL(\mu_r)}(\lambda_L^r,\lambda_R^r)$ is unitary and induced from unitary characters. So   $\mat{\lambda_L^r}{\lambda_R^r}$ must
consist of at most two strings as in the statement of the Proposition.
\end{proof}

\begin{remark} \label{rmk:unitarya}
Since all Levi subgroups of $G = GL(n, \bb C)$ consist only of $GL-$factors, one can apply the above Proposition for all $r \in \mathbb{Z}$ to conclude
that Theorem \ref{thm:unitarydual} holds for Type A. Hence we focus on the classical groups of Type B, C and D from now on.
\end{remark}

\begin{corollary}
  \label{l:regularintegral}

{Assume   $\mathbf{\mu_1 \neq 0}$}. Then
$$
  \mat{\lambda_L^1}{\lambda_R^1}= \begin{cases} {\mat{\frac{1}{2}}{-\frac{1}{2}}}\quad
  \text{ in types } B,C\\
\mat{\frac{1}{2}}{-\frac{1}{2}}\ \text{ or } \
\mat{1,0}{0,-1}  \quad\text{ in type } D.
\end{cases}
$$
\end{corollary}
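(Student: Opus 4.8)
The plan is to invoke Proposition~\ref{p:lambdar} at $r=1$ and then cut the resulting strings down to size, first by the regularity of $2\la$ and then by a bottom-layer non-unitarity argument. Specializing Proposition~\ref{p:lambdar} to $r=1$, both strings that can appear are centred at $\tfrac12$: a ``half-integer'' string $\mat{A,\dots,\tfrac12,\dots,a}{-a,\dots,-\tfrac12,\dots,-A}$ with $a+A=1$, whose left-hand coordinates are the consecutive half-integers $a,a+1,\dots,1-a$ (odd length), and an ``integer'' string $\mat{B,\dots,0,\dots,b}{-b,\dots,0,\dots,-B}$ with $b+B=1$, whose left-hand coordinates are the consecutive integers $b,b+1,\dots,1-b$ (even length). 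Since $\mu_1\neq 0$, at least one of the two occurs.

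Next I would exploit the regularity of $2\la$, equivalently of $\la$. If the half-integer string had length $\ge 3$, its left-hand coordinates would contain both $\tfrac12$ and $-\tfrac12$; as $e_i+e_j$ is a root of $\fg$ in types $B$, $C$, $D$, the corresponding coroot pairs to $\tfrac12+(-\tfrac12)=0$ against $\la$, contradicting regularity, so the half-integer string, when present, is exactly $\mat{\frac12}{-\frac12}$. The integer string always carries a coordinate $0$ on the left; in types $B$ and $C$ the coroot of the short root $e_i$ (resp.\ of the long root $2e_i$) then pairs to $0$ against $\la$, so no integer string occurs in types $B$, $C$ -- and those types are now done. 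In type $D$ there is no such short root, but a length-$\ge 4$ integer string would contain both $1$ and $-1$ on the left and again make $\langle\la,(e_i+e_j)^\vee\rangle$ vanish; so in type $D$ the integer string, when present, is exactly $\mat{1,0}{0,-1}$, and we are reduced to ruling out the case where both strings occur, i.e.\ $\mat{\la_L^1}{\la_R^1}=\mat{1,\frac12,0}{0,-\frac12,-1}$ with $\mu_1=3$.

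To kill that last case I would pass to the Levi $M_2$ of Section~\ref{sec:bottom}: the three $\mu_1$-coordinates and the $\mu_0$ zeros are absorbed into the factor $G(\mu_1+\mu_0)$, and its constituent $J_{G(\mu_1+\mu_0)}(\la_L^1\cup\la_L^0,\la_R^1\cup\la_R^0)$ carries the cx-relevant $K$-types $V_{\mathfrak{k}}(1,\dots,1,0,\dots,0)$ and $V_{\mathfrak{k}}(2,1,\dots,1,0,\dots,0)$ as bottom-layer $K$-types of $I_2$. A Casimir/intertwining-operator computation in the spirit of the proof of Proposition~\ref{p:lambdar} and of Section~2.7 of \cite{B1} then shows that the coexistence on the left of $\la_L^1$ of the pair $\{1,0\}$ (from the integer string) with the $\tfrac12$ (from the half-integer string) forces the invariant form to change sign between these two $K$-types, so $J(\la,-s\la)$ is not unitary; only $\mat{\frac12}{-\frac12}$ and, in type $D$, $\mat{1,0}{0,-1}$ survive, which is the assertion. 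I expect this last step to be the genuine obstacle: the first two steps are bookkeeping once Proposition~\ref{p:lambdar} is available, but excluding the simultaneous occurrence of the two strings is an analytic statement -- the combined $GL(3)$-block parameter is itself irreducibly induced from unitary characters, hence unitary, so the obstruction cannot be seen inside that block, and one must control the signature of the Hermitian form across a pair of bottom-layer $K$-types of $I_2$, dovetailing with the sign computations on (fundamental) cx-relevant $K$-types that drive the type $D$ analysis in Section~\ref{sec-dual}.
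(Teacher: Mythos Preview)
Your first two steps correctly unpack the paper's one-line proof: specializing Proposition~\ref{p:lambdar} to $r=1$ and then invoking regularity of $2\la$ to shrink each string is exactly what ``direct consequence of the fact that $2\la$ is assumed regular integral'' means. That part matches the paper.

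The problem is step~3. You read the Corollary as excluding the simultaneous occurrence of both strings in type $D$ (forcing $\mu_1\le 2$), and you propose to rule out $\mu_1=3$ by a bottom-layer non-unitarity argument. But this exclusion is neither what the paper proves nor true in general. Take $G=SO(6,\bC)$ with $\mu_0=0$, $\mu_1=3$, so $\la_L^1=(1,\tfrac12,0)$, $\la_R^1=(0,-\tfrac12,-1)$: then $2\la=(2,1,0)$ is regular integral for $D_3$, and the module is unitarily induced from unitary characters on $GL(1)\times GL(2)$, hence unitary. No sign change on bottom-layer $K$-types can exist. Your anticipated obstacle is real, but for a different reason than you suppose---the step fails because the claim is false, not because the signature computation is delicate. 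The paper's later treatment in Section~\ref{sec:nsphd} handles the case ``$\mat{1\ 0}{0\ -1}$ occurs'' without assuming the half-integer string is absent, confirming that the ``or'' in the Corollary lists the possible string shapes (each forced to its minimal length by regularity) rather than asserting that at most one appears. Drop step~3 and the proof is complete.
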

\begin{proof}
The statement is a direct consequence of the fact that $2\la$ is assumed regular integral.
\end{proof}

We consider
 $J_{G(\mu_1 + \mu_0)}(\lambda_L^1,\lambda_L^0,\lambda_R^1,\lambda_R^0)$ appearing in $J_2$ of \eqref{eq:inclusion}.
A consequence of Proposition \ref{p:lambdar} and Corollary
\ref{l:regularintegral} is that we can write the
parameter as
\begin{equation} \label{eq:larel}
(\la_{rel},-s_{rel}\la_{rel}) := (\lambda^1,\lambda^0,-\lambda^1,
\lambda^0)\quad \text{ with }\quad
\la^1=(\underset{\mu_1}{\underbrace{1,\dots ,1}})\quad  \mu_1=0, 1, 2.
\end{equation}
Specifically, $\la_{rel}=(\la^1,\la^0)$ and $s_{rel}$ is an
involution so that $s_{rel}(\la^1,\la^0)=(-\la^1,\la^0).$
Sections 4--6 is devoted to proving the following:
\begin{theorem} \label{thm:j2} Assume that the parameter is half-integral regular, and $\mu_r=0$ for $r\ge 2$ so that $\la=\la_{rel}.$
Then $J(\la,-s_{rel}\la)$ is unitary if and only if it
is of the form given in Theorem \ref{thm:unitarydual}; i.e. unipotent
tensored with a unitary character. When it is not unitary, the form is indefinite
on cx-relevant $K-$types.
\end{theorem}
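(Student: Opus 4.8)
The plan is to reduce to the lowest-rank building blocks via bottom layer $K$-types, and then to push the analysis down to the factor $J_{G(\mu_1+\mu_0)}$ with the "relevant" parameter $(\la_{rel},-s_{rel}\la_{rel})$ of \eqref{eq:larel}. Since we assume $\mu_r=0$ for $r\ge 2$, the parameter is already $\la=\la_{rel}=(\la^1,\la^0)$ with $\la^1=(1,\dots,1)$ of length $\mu_1\in\{0,1,2\}$ and $\la^0$ the remaining coordinates. The strategy is: first, treat $\mu_1=0$ (the spherical/nearly spherical case) and show that $J(\la^0,-s_0\la^0)$ is unitary iff it is the trivial-or-unipotent form of Theorem \ref{thm:unitarydual}; second, handle $\mu_1\in\{1,2\}$ by a similar argument, using Corollary \ref{l:regularintegral} to pin down the $r=1$ string. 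Throughout, the detector of non-unitarity is the signature of the intertwining operator on the cx-relevant $K$-types $V_{\mathfrak{k}}(1,\dots,1,0,\dots,0)$ and $V_{\mathfrak{k}}(2,1,\dots,1,0,\dots,0)$, which by Section \ref{sec:bottom} are bottom layer for the inductions $I_1, I_2$ of \eqref{eq:inclusion} and so carry the same signature in $J$ as in the small factor.

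The main computational engine is the explicit diagonalization of the standard intertwining operator on these low-lying $K$-types, in the spirit of \cite{B1,B2,V1}. First I would reduce to the case where all coordinates of $\la^0$ are distinct half-integers (regularity) and, using the $W\times W$ action of Theorem \ref{thm-Zh}(b), arrange them in a standard order; the parameter $s_{rel}$, being the involution with $s_{rel}(\la^1,\la^0)=(-\la^1,\la^0)$, then has a normal form as a product of sign changes and transpositions. Next I would compute the operator $A(s_{rel},\la)$ restricted to each cx-relevant $K$-type as a product of rank-one factors indexed by the "bad" root-string data, getting a scalar (or small matrix) in each $\al$-coordinate; the sign of this scalar is governed by whether the relevant coordinate differences land in the complementary-series window. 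The unitarity condition then reads off as: the nontrivial $O(1)$- or $Sp$-character $\Theta$-lift data forces exactly the patterns \eqref{eq:B.1a}--\eqref{eq:D.1}, and any other choice produces a sign change on $V_{\mathfrak{k}}(1,\dots,1,0,\dots,0)$ or $V_{\mathfrak{k}}(2,1,\dots,1,0,\dots,0)$. For the "only if" direction the key point is that these two families of $K$-types already suffice to rule out everything outside Theorem \ref{thm:unitarydual}; for the "if" direction one invokes that the listed unipotents are genuinely unitary (they are $\Theta$-lifts of unitary characters, cf. Appendix A), and that unitary induction preserves unitarity.

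I expect the main obstacle to be the bookkeeping in type $D$, where there are two inequivalent unipotent representations $\pi_u^{even},\pi_u^{odd}$ distinguished only by the parity of $\sum_i\al_i$ on the $K$-spectrum, and where the outer automorphism and the possibility $\mat{1,0}{0,-1}$ for the $r=1$ string (Corollary \ref{l:regularintegral}) interact with the sign computation; one has to check that the even/odd constraint is exactly what the intertwining operator signature enforces, rather than an artifact of the normalization on the lowest $K$-type. A secondary difficulty is ensuring the cx-relevant $K$-types are actually \emph{present} in the small factor $J_{G(\mu_1+\mu_0)}$ (so that "indefinite on cx-relevant $K$-types" is a meaningful statement) — this requires a branching/multiplicity check, for which the explicit $K$-spectra recorded in Theorem \ref{thm:unitarydual} together with Theorem \ref{thm-PRV} should be enough. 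Once these are in place, induction in stages (peeling off one $GL$-block of $\mu$ at a time, using Proposition \ref{p:lambdar} to control each string) reduces the general case to the base cases handled above, completing the proof.
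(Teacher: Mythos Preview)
Your proposal sketches a \emph{direct computation} of the intertwining operator $A(s_{rel},\la)$ on cx-relevant $K$-types as a product of rank-one factors, with unitarity read off from whether coordinates fall in a ``complementary-series window.'' This is not the paper's approach, and as written it has a real gap.

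The paper's argument (Sections 4--6, with the strategy in Section~\ref{subsec-strategy}) is a \emph{deformation argument}, not a direct signature computation. For the spherical part $J(\la^0,\la^0)$ one first organizes $\la^0$ into \emph{strings} $\kappa_i,\sigma_j$ (maximal arithmetic progressions in the half-integer and integer coordinates), builds a parabolic $P(\la)$ and an induced module $I_{P(\la)}$ whose cx-relevant $K$-type multiplicities agree with those of $J(\la^0,\la^0)$ (Lemma~\ref{lem:bij} and its analogues), and then deforms a string $\xi\mapsto\xi+t$ upward. Signatures on the cx-relevant $K$-types are constant for $0\le t<1/2$; at $t=1/2$ either $P(\la_t)$ is unchanged (and one continues, eventually hitting a Casimir-inequality indefiniteness on the trivial/adjoint pair), or $P(\la_t)$ changes and an explicit non-spherical factor appears whose bottom-layer $K$-types are indefinite. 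This reduces everything to a short list of \emph{initial cases} handled by hand. The non-spherical part $\mu_1>0$ is then dealt with by bottom-layer reduction to the spherical analysis, not by a separate intertwining-operator computation.

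Two concrete problems with your plan: (1) The string decomposition and the parabolic $P(\la)$ are absent from your outline, yet they are exactly what makes the multiplicity-preservation statement (Lemma~\ref{lem:bij}) and the deformation argument work; without them you have no induction mechanism and no way to reach the ``initial cases.'' (2) You propose factoring the operator into rank-one pieces, but the paper explicitly notes that $V_{\mathfrak{k}}(2,0,\dots,0)$ is \emph{not} petite/single-petaled in type~B (see the proof of Lemma~\ref{lem:bij}); the $SL(2)$-intertwining operators are isomorphisms on it only because of conditions \eqref{eq:medium}--\eqref{eq:extraB} on the strings, a point that your direct-computation sketch cannot see. Your description of ``the sign of this scalar is governed by whether the relevant coordinate differences land in the complementary-series window'' is too vague to be a proof, and in any case the paper never computes these signs directly---it infers them via semicontinuity of signatures under deformation.
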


\begin{corollary} Let $J(\la,-s\la)$ be an irreducible module with half integral regular
infinitesimal character. Then Theorem \ref{thm:unitarydual} holds.
\end{corollary}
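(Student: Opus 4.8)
The plan is to deduce the general case of Theorem \ref{thm:unitarydual} from the relevant-parameter case already settled in Theorem \ref{thm:j2}, by peeling off the ``large coordinate'' blocks of the lowest $K$-type $\mu$ one string at a time via bottom-layer arguments. First I would recall the setup: given $J(\la,-s\la)$ with $2\la$ regular integral, write its lowest $K$-type in the form $\mu=(\dots,r^{\mu_r},\dots,1^{\mu_1},0^{\mu_0})$ and form the induced modules $I_1=\Ind_{M_1}^G(J_1)$ and $I_2=\Ind_{M_2}^G(J_2)$ of \eqref{eq:inclusion}, which contain $J(\la,-s\la)$ as the lowest-$K$-type factor. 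The point of bottom layer $K$-types is that unitarity of $J(\la,-s\la)$ forces unitarity of each tensor factor $J_{GL(\mu_r)}(\la_L^r,\la_R^r)$ in $J_1$ for $r\ge 1$ and of the ``tail'' factor $J_{G(\mu_1+\mu_0)}$ in $J_2$; this is exactly the content of Remark \ref{rmk:unitarya} for Type A.

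Next I would invoke Proposition \ref{p:lambdar}: for each $r\ge 1$ the parameter $(\la_L^r,\la_R^r)$ of the $GL(\mu_r)$-factor must consist of at most the two displayed strings, which is precisely the statement that $J_{GL(\mu_r)}(\la_L^r,\la_R^r)$ is a unitary character of $GL(\mu_r)$ after an induction in stages (the two strings giving the two blocks $GL_e\times GL_o$). Thus, modulo the tail, $J(\la,-s\la)$ is already exhibited as $\Ind$ of (unitary characters on $GL$-blocks) times $J_{G(\mu_1+\mu_0)}$ on a smaller classical group of the same type. Applying induction in stages, the whole question reduces to the factor $J_{G(\mu_1+\mu_0)}(\la^1\cup\la^0,-\la^1\cup\la^0)$. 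By Corollary \ref{l:regularintegral}, its parameter is of the relevant form \eqref{eq:larel} with $\la^1=(1^{\mu_1})$, $\mu_1\in\{0,1,2\}$, so we are exactly in the hypothesis of Theorem \ref{thm:j2}. That theorem then says this factor is unitary iff it is a unipotent representation (one of \eqref{eq:B.1a}--\eqref{eq:D.1}) tensored with a unitary character, and otherwise has indefinite form on a cx-relevant $K$-type --- which, being bottom layer, propagates to indefiniteness of $J(\la,-s\la)$. Assembling the $GL$-characters from the large-coordinate strings together with this unipotent-times-character tail and re-packaging the parabolic as $P=LU$ gives the form $\Ind_{LU}^G((\bC_\mu\otimes\pi_u)\otimes\mathbf 1)$ asserted in Theorem \ref{thm:unitarydual}.

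I would organize the write-up as: (i) reduce to showing each $GL(\mu_r)$-factor, $r\ge2$, contributes only a unitary character via Proposition \ref{p:lambdar} and induction in stages; (ii) handle $r=1$ and the zero coordinates by Corollary \ref{l:regularintegral}, putting the tail into the shape \eqref{eq:larel}; (iii) quote Theorem \ref{thm:j2} for the tail; (iv) reassemble. Type A is already disposed of by Remark \ref{rmk:unitarya}, so only B, C, D need this argument, and the non-unitarity clause follows because every obstruction produced along the way sits on a bottom-layer (cx-relevant) $K$-type.

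The main obstacle, and the only genuinely nontrivial input, is Theorem \ref{thm:j2} itself --- the case-by-case classification in Sections 4--6 showing that among the relevant-parameter modules exactly the unipotent-times-character ones are unitary. Everything else here is bookkeeping with bottom-layer $K$-types and induction in stages; the subtle point to be careful about is that peeling off the $GL$-strings must be compatible with the chain of real parabolics containing the centralizer of $\mu$, so that the bottom-layer signature-matching property (multiplicities and signs of $\mu_{M_i}$ on $J_i$ agree with those of $\mu_i$ on the induced module and on $J$) genuinely transfers non-unitarity of any factor back to $J(\la,-s\la)$.
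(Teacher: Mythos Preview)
Your proposal is correct and follows essentially the same approach as the paper: reduce via bottom-layer $K$-types and Proposition \ref{p:lambdar} to the tail factor $J_{G(\mu_1+\mu_0)}(\la_{rel},-s_{rel}\la_{rel})$, then invoke Theorem \ref{thm:j2} and reassemble by induction in stages. The paper's proof is terser and additionally records (via Theorem 14.1 of \cite{B1}) that in the unitary case the induced module $I_2$ is in fact irreducible and equal to $J(\la,-s\la)$, not merely containing it as its lowest $K$-type subquotient.
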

\begin{proof} The Corollary (and therefore Theorem
  \ref{thm:unitarydual}) follows immediately from properties of bottom
  layer $K-$types.
Suppose $J(\la_{rel},-s_{rel}\la_{rel})$ is not of the form given in
Theorem \ref{thm:unitarydual}. Then by Proposition \ref{thm:j2}
it must be non-unitary, which has indefinite form on cx-relevant $K-$types.
Since all cx-relevant $K-$types are bottom layer in $I_2$, this implies that
$J(\la,-s\la)$ is not unitary.

On the other hand, if $J(\la_{rel},-s_{rel}\la_{rel})$ is of the form given in Theorem \ref{thm:unitarydual}, then
by induction in stages $I_2$ is of the form given by Theorem \ref{thm:unitarydual},
with $J(\la,-s\la)$ being its lowest $K-$type subquotient. Since it is
a subquotient of the unitary module $I_2,$ $J$ is unitary. A sharper
result holds -- by Theorem 14.1 of \cite{B1}, $I_2 = J(\la,-s\la)$.
\end{proof}
\subsection{General Strategy} \label{subsec-strategy}
By the corollary above, it suffices to prove Theorem \ref{thm:j2}. In particular, when the parameter
is not as in Theorem \ref{thm:unitarydual},  the form is indefinite on a cx-relevant
$K-$type. These give rise to bottom layer $K-$type in the general case.

\medskip
To treat the case $J((\la^1,\la^0),(-\la^1,\la^0))$ given in Theorem \ref{thm:j2},
the spherical case $J(\la^0,\la^0)$ plays an important role.
Write $\la=\la^0$ from now on.
We define a parabolic subgroup $P(\la)$  and a representation
$\pi_{L(\la)}$ on its Levi component so that
the induced module $I_{P(\la)} := \Ind_{P(\la)}^G(\pi_{L(\la)})$ is Hermitian, and the cx-relevant
$K-$types occur with full multiplicity in the spherical subquotient $J(\la,\la)$.
The induction step proceeds as follows.  Deform $\la$ and the induced module $I_{P(\la)}$
to $\la+t\nu$ where $\nu$ is central for $L(\la)$, so that the norm of the parameter
becomes larger, and the multiplicities of the cx-relevant $K-$types do
not change for small $t.$ Let $t_0>0$ be the nearest where the
multiplicities change; $P(\la+t_0\nu)$ changes as well. If the
condition in Theorem \ref{thm:unitarydual} are not satisfied, the
induction hypothesis holds, so the form is {\it indefinite on cx-relevant
$K-$types}, that is, the form has different signatures
on the lowest $K-$type and at least one of the cx-relevant $K-$types,
and the semi-continuity of the signature implies that the
form was indefinite on cx-relevant $K-$types at $t=0.$ The exceptions
are when $J(\la+t_0\nu,\la+t_0\nu)$ is unitary, or the deformation goes on
to ``$\infty$''. In the first case we find a non-spherical factor in the deformed
induced module with a pair of
indefinite cx-relevant $K-$types. In the second case, the Casimir
inequality implies that the form is indefinite on the trivial and
adjoint $K-$types.

\bigskip
\textit{We will henceforth concentrate on the cases when $\la$ is NOT regular
  integral. The cases when $\la$ is regular integral, are covered by \cite{E};
  the unipotent representations occurring are $\pi_u=Triv.$}

\section{Proof of Theorem \ref{thm:j2} -- Type B}
Let $G = SO(2m+1,\mathbb{C})$ and $K = SO(2m+1)$. The $K-$types have highest weights
$\eta$ with coordinates integers only. Since $\rho=(m-1/2,\dots
,1/2)$,  $2\la=\{\eta - \rho\} + \rho$, $2\la$ must have  integer coordinates only; so $\la$ has integer and half-integer coordinates.
Since we assume that $\la$ is regular half-integral but not integral,
the integral system determined by $\la$ is type $C\times C.$

\subsection{Spherical Representations} \label{sec:4.1}
In the next few subsections, we will prove the following Proposition.
\begin{proposition} \label{thm:unitb}
Let $\la$ be regular half-integral. The spherical irreducible module $J(\la,\la)$ is unitary if and only if it is unipotent, i.e. the parameter is
$$
\la=\left(-K_0+\frac12,\dots,-\frac12; -N_0,\dots, -1 \right)
$$
with $N_0 \leq K_0$. This is a unipotent representation attached to the
nilpotent orbit $[2^{2N_0}1^{2K_0-2N_0+1}]$.

When not unitary, the form is indefinite on the set of cx-relevant $K-$types with highest weights
$$
CXB:=\{(0,\dots ,0),\quad (1,\dots ,1,0,\dots ,0),\quad (2,0,\dots ,0)\}.
$$
\end{proposition}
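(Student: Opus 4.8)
The plan is to reduce the analysis of the spherical module $J(\la,\la)$ at a regular half-integral (non-integral) parameter to a combinatorial problem on the two strings coming from the $\bZ$ and $\bZ+\tfrac12$ coordinates of $\la$, then to run the deformation argument of Subsection \ref{subsec-strategy} to pin down exactly which parameters stay unitary. First I would normalize: since $\la$ is regular half-integral but not integral, the integral root system it determines is of type $C\times C$, so write $\la$ (up to $W$) as an interleaving of a ``half-integer string'' $(-K_0+\tfrac12,\dots,-\tfrac12)$ and an ``integer string'' $(-N_0,\dots,-1)$ with lengths $K_0,N_0$ (together with possibly repeated/shuffled blocks); the spherical module only depends on the $W$-orbit. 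I would then invoke the theory of spherical unitary dual for the complex group $Sp(2m,\bC)$ (which governs each $C$-factor), as in \cite{B1}, \cite{B2}, to identify the genuinely new unitary point: the module is unitary precisely when the two strings are ``nested'' with $N_0\le K_0$, which is the unipotent representation attached to $[2^{2N_0}1^{2K_0-2N_0+1}]$ — this matches \eqref{eq:B.1a} after the coordinate bookkeeping. The positive direction (that this parameter is indeed unitary, with the stated $K$-spectrum) I would take from Theorem \ref{thm:unitarydual} / Appendix A, since it is an iterated $\Theta$-lift of a trivial representation.

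For the negative direction — the heart of the proposition — I would argue by the semicontinuity/deformation scheme already outlined. Fix $\la$ not of the nested unipotent form. Build the parabolic $P(\la)$ and the module $\pi_{L(\la)}$ on its Levi so that $I_{P(\la)}=\Ind_{P(\la)}^G(\pi_{L(\la)})$ is Hermitian, $J(\la,\la)$ is its spherical subquotient, and each of the three cx-relevant $K$-types in $CXB=\{(0^m),(1^p 0^{m-p}),(2,0^{m-1})\}$ appears in $J(\la,\la)$ with full multiplicity (this is the content of the bottom-layer discussion in Subsection \ref{sec:bottom}). Deform $\la\rightsquigarrow \la+t\nu$ along a direction $\nu$ central for $L(\la)$ chosen to increase $\|\la\|$; the cx-relevant multiplicities are locally constant. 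At the first wall $t_0>0$ where a multiplicity jumps, either the limiting parameter is again unitary — in which case, exactly as in Subsection \ref{subsec-strategy}, one extracts a non-spherical constituent on which a pair of cx-relevant $K$-types carries indefinite form, and then semicontinuity of the signature pushes the indefiniteness back to $t=0$ — or the deformation runs to $\infty$, in which case Casimir's inequality (\cite[Lemma 12.6]{V1}, as used in Proposition \ref{p:lambdar}) forces an indefinite form already on the trivial and adjoint $K$-types, both of which lie in $CXB$. Either way the signature on $CXB$ relative to the spherical (lowest) $K$-type is not definite, so $J(\la,\la)$ is non-unitary with indefinite form on $CXB$.

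The genuinely technical step — and the one I expect to be the main obstacle — is establishing the precise \emph{position} of the first wall $t_0$ and controlling the structure of the reducible induced module $I_{P(\la+t_0\nu)}$ there: one must verify that the ``new'' subquotient produced at the wall really does contain two cx-relevant $K$-types with opposite signs, which amounts to an explicit intertwining-operator / Hermitian-form computation on the low $K$-types of a smaller-rank factor. This is where the $C\times C$ integral structure is used essentially: the two strings interact only through a bounded number of ``overlap'' configurations, so the wall-crossing analysis reduces to finitely many base cases in small rank (the analogues of the $SO(3,\bC)$, $SO(5,\bC)$ computations), which I would handle directly or cite from \cite{B1}. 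Once the non-nested configurations are all shown non-unitary with indefiniteness localized on $CXB$, and the nested one is matched to the unipotent of Theorem \ref{thm:unitarydual}, the proposition follows.
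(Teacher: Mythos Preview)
Your overall strategy matches the paper's --- build $P(\la)$ and $I_{P(\la)}$, deform along a central direction, and use semicontinuity of signatures on the cx-relevant $K$-types --- but two concrete points are off and would block the argument as written.

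First, your normalization is incomplete. A regular half-integral $\la$ does \emph{not} in general consist of a single half-integer string and a single integer string: there can be gaps among the coordinates, producing several strings $\kappa_0,\kappa_1,\dots$ and $\sigma_0,\sigma_1,\dots$ (this is exactly the decomposition built in Section~\ref{sec:4.1}). The paper's argument is an induction on the number of strings, not a reduction to small-rank groups: the base case (Section~\ref{sec-bn>k}) is $\la=\kappa_0\cup\sigma_0$ with $N_0>K_0$ in \emph{full} rank, handled by deforming $\sigma_0$ by $t$ and, at $t=1/2$, exhibiting an explicit non-spherical factor of $Ind(\la_{1/2})$ whose form is indefinite on a specific pair of fundamental cx-relevant $K$-types (equation~\eqref{eq:brelevant}). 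The remaining strings $\kappa_i,\sigma_j$ ($i,j>0$) are then deformed upward (Section~\ref{sec:additionalb}) until one hits one of three \emph{initial cases}: (a) the deformation runs to $\infty$ (Casimir); (b) the two-string base case; or (c) the spherical quotient at $t=1/2$ becomes \emph{unitary}, in which case a separate non-spherical factor supplies indefiniteness on $V_{\mathfrak{k}}(1,1,0,\dots,0)$ and $V_{\mathfrak{k}}(2,0,\dots,0)$. Your sketch collapses all of this into ``finitely many base cases in small rank (the analogues of the $SO(3,\bC)$, $SO(5,\bC)$ computations),'' which is not what is needed and not what works.

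Second, your appeal to the bottom-layer discussion of Section~\ref{sec:bottom} for the claim that the cx-relevant $K$-types occur with full multiplicity in $J(\la,\la)$ is misplaced. The actual statement is Lemma~\ref{lem:bij}, and its proof is more delicate than a bottom-layer argument because $(2,0,\dots,0)$ is \emph{not} petite/single-petaled for the long root in type~B. The conditions~\eqref{eq:medium} and~\eqref{eq:extraB} on the strings are precisely what guarantee that the relevant $SL(2)_\alpha$-intertwining operators are isomorphisms on these $K$-types; without establishing this, the semicontinuity step has no footing. (Relatedly, your remark that ``the spherical unitary dual for the complex group $Sp(2m,\bC)$ governs each $C$-factor'' is misleading: the integral system is $C\times C$, but the analysis is carried out directly in $SO(2m+1,\bC)$, not by reduction to $Sp$.)
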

The unipotent representation in Proposition \ref{thm:unitb} is unitary because it
can be realized via  the dual pair correspondence as
$\Theta(\mathrm{triv}_{Sp})$, from the pair $Sp(2N_0,\bC) \times SO(2K_0+2N_0+1,\bC)$
in the stable range.

\medskip
In order to prove the non-unitarity of other parameters, we use the strategy in Section \ref{subsec-strategy}. We construct an induced module
$I_{P(\la)}$ having  $J(\la,\la)$ as a quotient.
Let $\la$ be half-integral and dominant for the standard
positive system, i.e.
$$\la = (\dots \la_i\ge \la_{i+1} \ge \dots \ge 0),\quad 2\la_i \in \mathbb{N}.$$
If $\la$ is further assumed to be regular, then the above inequalities are strict.
We construct a parabolic subgroup $P(\la) = L(\la)U(\la)$ and an induced module
$I_{P(\la)}$ so that $J(\la,\la)$ is the spherical irreducible
  factor in $I_{P(\la)}$, and the multiplicities of the cx-relevant
  $K-$types are the same.

\begin{itemize}
\item[(i)]
If $1/2$ is a coordinate of $\la,$ form the longest
\textit{string}
\begin{equation*}
  \kappa_0 := (-K_0 + 1/2,\dots ,-1/2)
\end{equation*}
such that all the half-integers starting from $1/2$ to $K_0-1/2$ are coordinates of
$\la$, but $K_0+1/2$ is not. If the coordinate $1$ occurs, form the longest
string
$$\sigma_0 := (-N_0,\dots ,-1)$$
where $N_0$ is the largest integer coordinate that occurs in $\la,$ but $N_0+1$ does not.
Add a factor to $L(\la)$ of type $G(K_0+N_0)=SO(2K_0+2N_0+1)$ and the spherical irreducible
representation with parameter
$$
\begin{pmatrix}
  -K_0+1/2&,\dots ,&-1/2&;&-N_0&,\dots,&-1\\
  -K_0+1/2&,\dots ,&-1/2&;&-N_0&,\dots,&-1
\end{pmatrix}
$$
If $1/2$ is not a coordinate, let $k_1-1/2>0$ be the smallest half-integer coordinate, and form the string $\kappa_1=(k_1-1/2,\dots ,K_1-1/2)$ with increasing coordinates differing by 1 as before. Add a
factor  $GL(K_1-k_1+1),$ and the 1-dimensional representation of $GL(K_1-k_1+1)$ with parameter
$$
\begin{pmatrix}
  k_1-1/2&,\dots ,&K_1-1/2 \\
  k_1-1/2&,\dots ,&K_1-1/2
\end{pmatrix}
$$
to $L(\la)$. Similarly if $1$ does not occur as a coordinate, form $\sig_1=(n_1,\dots ,N_1)$, add a factor $GL(N_1-n_1+1)$ to the Levi component $L(\la)$, and the 1-dimensional representation of $GL(N_1-n_1+1)$ with parameter
$$
\begin{pmatrix}
  n_1 &,\dots ,&N_1 \\
  n_1&,\dots ,&N_1
\end{pmatrix}
$$
\item[(ii)] Remove the coordinates in Step (i) from $\la$, and repeat
  on the remainder until there
  are no half-integer coordinates left. Since the assumption was that at most one
  coordinate was equal to $1/2,$ only $GL-$factors are created.
\item[(iii)] Repeat Steps (i) and (ii) on the integer coordinates
  until there are none left.
\end{itemize}
The process produces a parabolic subgroup, and an induced module on its Levi component. The Levi
component is
\begin{equation} \label{eq:deformb}
L(\la) := \prod_{i > 0} GL(\sigma_j) \times \prod_{j>0} GL(\kappa_i)\times G(K_0+N_0).
\end{equation}
If $\lambda$ is assumed to be regular, its corresponding strings $\kappa_i$, $\sigma_j$
satisfy
\begin{equation}
  \label{eq:medium}
\begin{cases} k_i>2 &\text{if }\ 1/2 \text{ is a coordinate},\\ k_i\ge 2 &\text{otherwise}\end{cases}
\quad \text{and} \quad \begin{cases} n_j> 2 &\text{if }\ 1 \text{ is a coordinate},\\ n_j\ge 2 &\text{otherwise}\end{cases}
\end{equation}

In the proof of Proposition \ref{thm:unitb} below, we begin with $J(\la,\la)$ where $\la$ is regular and
half-integral. Then we deform some $GL$--strings $\kappa_i$, $\sigma_j$, $i, j > 0$ upward and analyze
the new parameter $\la_{new}$ and its corresponding induced module $I(\la_{new})$.
Here $\la_{new}$ is half-integral but is not necessarily regular
(see Example \ref{eg:deforms3} below). Nevertheless, by the above construction of
$\kappa$ and $\sigma-$strings, it is easy to see that the more general parameters satisfy
\begin{equation}
  \label{eq:extraB}
\begin{cases} k_{i+1}-K_i\ge 2,\ \text{or}\\
k_i\le k_{i+1}\le K_{i+1}\le K_i,\end{cases} \quad \text{and} \quad
\begin{cases} n_{j+1}-N_j\ge 2,\ \text{or} \\
n_{i}\le n_{i+1}\le N_{i+1}\le N_{i} \end{cases}.
\end{equation}
We say the strings $\kappa_i$, $\kappa_{i+1}$ (or $\sigma_j$, $\sigma_{j+1}$)
{\bf nested} if its parameters satisfy \eqref{eq:extraB} for all $i, j \geq 0$. The parabolic subgroup is determined by the order of
the factors, and the integer and half-integer strings are interchangeable.
\medskip

The main property of the cx-relevant $K-$types is the following Lemma.

\begin{lemma} \label{lem:bij} Let $\la$ be dominant whose coordinates are
half-integers. Assume that the strings of $\la$ satisfy \eqref{eq:medium} and \eqref{eq:extraB}.
The multiplicities of the cx-relevant $K-$types in $I_{P(\la)}$
coincide with those in $J(\la,\la).$
\end{lemma}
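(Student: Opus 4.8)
The strategy is to realize each cx-relevant $K$-type as a \emph{bottom layer} $K$-type for the parabolic induction $I_{P(\la)}=\Ind_{P(\la)}^G(\pi_{L(\la)})$, and then quote the multiplicity-matching property of bottom layer $K$-types recalled in Section~\ref{sec:bottom} (see also Section~2.7 of \cite{B1} and \cite{KnV}): a bottom layer $K$-type occurs with the same multiplicity in $I_{P(\la)}$, in the inducing module (as a $(K\cap L(\la))$-type), and in the spherical (lowest $K$-type) subquotient $J(\la,\la)$.

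First I would pass to $K$. Since $P(\la)=L(\la)U(\la)$ is a real parabolic subgroup, $I_{P(\la)}|_K\cong\Ind_{L(\la)\cap K}^{K}\bigl(\pi_{L(\la)}|_{L(\la)\cap K}\bigr)$, so by Frobenius reciprocity for the compact pair $L(\la)\cap K\subset K$ the multiplicity of a $K$-type $\mu$ in $I_{P(\la)}$ equals $\sum_{\mu_L}[\mu_L:\pi_{L(\la)}]\cdot[\mu|_{L(\la)\cap K}:\mu_L]$. By the construction \eqref{eq:deformb}, $\pi_{L(\la)}$ is a tensor product of one-dimensional characters on the $GL(\kappa_i)$ and $GL(\sigma_j)$ factors with the spherical module on $G(K_0+N_0)$; since those $GL$-characters have equal top and bottom Zhelobenko rows, they restrict trivially to the unitary groups $U(\kappa_i),U(\sigma_j)$. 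Hence the $(L(\la)\cap K)$-spectrum of $\pi_{L(\la)}$ is the trivial representation on every $GL$-factor tensored with the $K$-spectrum of the spherical $SO(2K_0+2N_0+1)$-module written out in Theorem~\ref{thm:unitarydual}.

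Next I would verify that every member of $CXB$ is a bottom layer $K$-type for $I_{P(\la)}$ and that, conversely, each occurrence of such a $K$-type in $I_{P(\la)}$ is a bottom layer occurrence. Concretely: $(0,\dots,0)$ is the common lowest $K$-type and occurs once in both modules; for $(1,\dots,1,0,\dots,0)$ and $(2,0,\dots,0)$ one writes them as dominant reorderings of $(L(\la)\cap K)$-types of the form $(0,\dots,0;\beta)$ with $\beta$ in the spherical $K$-spectrum of the $G(K_0+N_0)$-factor, and checks that no other $(L(\la)\cap K)$-type of $\pi_{L(\la)}$ contributes to these $K$-types of $G$ under Frobenius reciprocity. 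This last check is where the hypotheses \eqref{eq:medium} and \eqref{eq:extraB} are used: the conditions that the $GL$-strings start at $k_i\ge 2$, $n_j\ge 2$ (strictly $>2$ when $1/2$, resp.\ $1$, is a coordinate of $\la$) force any non–bottom-layer $K$-type of $I_{P(\la)}$ to have an entry $\ge 3$, or an entry $\ge 2$ in a position excluded for the members of $CXB$; and the nesting conditions \eqref{eq:extraB} are needed precisely because the deformation argument produces non-regular parameters $\la_{new}$ whose strings may overlap, ensuring that the overlapping strings still contribute the expected count.

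Once each $\mu\in CXB$ is known to be a bottom layer $K$-type, the multiplicity-matching property yields $[\mu:I_{P(\la)}]=[\mu:J(\la,\la)]$, which is the assertion. The main obstacle is exactly the combinatorial verification in the third step — ruling out cx-relevant $K$-types above the bottom layer in the presence of several $GL$-strings and of the possibly non-regular, nested parameters $\la_{new}$ — and this is where \eqref{eq:medium} and \eqref{eq:extraB} enter essentially.
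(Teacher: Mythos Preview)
Your bottom-layer strategy is genuinely different from the paper's argument, and as written it has a real gap.

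The paper does \emph{not} argue via bottom layer. It uses the factorization
\[
I_B(\la,\la)\;\twoheadrightarrow\; I_{P(\la)}\;\xrightarrow{\ A\ }\; I_B(-\la,-\la),
\]
so that $J(\la,\la)$ is the image of $A$. The lemma then reduces to showing that $A$ is injective on each cx-relevant $K$-isotypic component. One factors $A$ as a product of rank-one intertwining operators induced from $SL(2)_\al$'s (one for each reflection needed to ``flip'' every string $\kappa_i,\sig_j$ to its negative) and checks that on the restrictions of the cx-relevant $K$-types each such operator is an isomorphism. This is where \eqref{eq:medium} and \eqref{eq:extraB} enter: they guarantee that every pairing $\langle\check\al,w\la\rangle$ encountered is either non-integral or $\ge 2$, so the $SL(2)$-kernel misses the cx-relevant $K$-types. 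For the fundamental $K$-types this is the petite/single-petaled mechanism of \cite{B2}; the novelty is that $(2,0,\dots,0)$ is \emph{not} petite for long roots, and \eqref{eq:medium} is precisely what is needed to push the argument through for it.

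Your approach breaks at the step ``every member of $CXB$ is bottom layer for $I_{P(\la)}$.'' Since $\pi_{L(\la)}$ restricts trivially on each $GL$-factor, its $(K\cap L)$-types all have the form $(0,\dots,0;\beta)$ with $\beta$ supported on the $G(K_0+N_0)$-block; hence every bottom-layer $K$-type of $I_{P(\la)}$ has at most $K_0+N_0$ nonzero coordinates. But $CXB$ contains $(1^k,0^{m-k})$ for \emph{all} $k\le m$, and these are not bottom layer once $k>K_0+N_0$. More seriously, your proposed use of \eqref{eq:medium} and \eqref{eq:extraB} conflates two different objects: those conditions bound the numerical \emph{values} of the entries of $\la$ on each string, whereas the $K$-spectrum of $I_{P(\la)}$ depends only on the block sizes of $L(\la)$ and on the $(K\cap L)$-spectrum of $\pi_{L(\la)}$, neither of which sees the inequalities $k_i\ge 2$, $n_j\ge 2$. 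So the assertion that these hypotheses ``force any non-bottom-layer $K$-type of $I_{P(\la)}$ to have an entry $\ge 3$'' cannot be correct as stated; the hypotheses are tailored to control $SL(2)$-intertwining operators, not Frobenius branching.
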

{\begin{proof} This kind of result can be found in
    \cite{B2}. The main difference is that $(2,0,\dots
    ,0)$ is not petite/single petaled. The condition that the value of
    $\check\al$ for $\al$ a long root on the highest weight of the $K-$type be $\le 3$ is
    satisfied except for the case of $(2,0,\dots ,0)$ and a long root.
    The crucial property needed is that
    $SL(2)-$intertwining operators be isomorphisms on these $K-$types. Condition
    \eqref{eq:medium} insures that this property is still valid for
    the larger class of $K-$types. We sketch the steps.

Recall that $\la$ was assumed dominant. Then $J(\la,\la)$ is the image
of the long intertwining operator from $I_B(\la,\la)$ to
$I_B(-\la,-\la).$ The module $I_{P(\la)}$ is a homomorphic image of
$I_B(\la,\la).$ The long intertwining operator $A_{w_0}$ factors into
$$
I_B(\la,\la)\longrightarrow I_{P(\la)}\longrightarrow I_B(-\la,-\la).
$$
We only need to show that the intertwining operator on the right is an
injection  on the cx-relevant $K-$types. We need to ``\textit{flip}''
the coordinates of the $\kappa_i$ and $\sig_j$ into their negatives.   This is done by embedding into a larger induced module where it is possible to factor the operator further into ones  induced from $SL(2)_\alpha'$s. Condition
\eqref{eq:medium} insures that they are isomorphisms on the
restrictions of the cx-relevant $K-$types.  This is also the reason
that we have put $\kappa_0$ and $\sig_0$ into the Levi component.
\end{proof}}

We finish this subsection by giving a necessary condition on the spherical parameter:
\begin{lemma}\label{l:bottomb}
If $J(\lambda,\lambda)$ is unitary, then the string $\kappa_0 = (-K_0+1/2, \dots, 1/2)$ must appear in $(\lambda,\lambda)$.
\end{lemma}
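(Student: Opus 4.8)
The plan is to prove the contrapositive: if $\tfrac12$ does not occur among the coordinates of $\lambda$ --- equivalently, the string $\kappa_0$ is empty in the construction of $P(\lambda)$ in Section~\ref{sec:4.1} --- then $J(\lambda,\lambda)$ is not unitary, with the Hermitian form already indefinite on the set $CXB$ of cx-relevant $K$-types. The first, elementary, observation is that under this hypothesis every half-integer string $\kappa_i$ produced by the construction satisfies $k_i\ge 2$; in particular $\lambda$ is dominant with leading coordinate $\lambda_1\ge\tfrac32$, and if the integer $1$ occurs as a coordinate then the $G(K_0+N_0)=SO(2N_0+1)$-factor of $L(\lambda)$ carries the spherical module $J_{SO(2N_0+1)}\big((-N_0,\dots,-1),(-N_0,\dots,-1)\big)$, whose infinitesimal character $(N_0,\dots,1)$ is regular \emph{integral} but $\ne\rho_{SO(2N_0+1)}$.

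Next I would feed this into the general strategy of Section~\ref{subsec-strategy} applied to $I_{P(\lambda)}$. By Lemma~\ref{lem:bij} the cx-relevant $K$-types occur with the same multiplicities in $I_{P(\lambda)}$ and in $J(\lambda,\lambda)$, and the form on these $K$-types in $J(\lambda,\lambda)$ is computed by the long intertwining operator $A_{w_0}$, which factors through $I_{P(\lambda)}$ and, on cx-relevant $K$-types, into the rank-one operators attached to the $SL(2)_\alpha$'s that flip the $GL$-strings $\kappa_i,\sigma_j$, together with the operator of the $SO(2N_0+1)$-factor. If the integer $1$ occurs, the $SO(2N_0+1)$-factor is spherical with regular integral infinitesimal character $\ne\rho$, hence not unitarily induced from the trivial representation; by the analysis of the integral case (\cite{E}, equivalently Parthasarathy's inequality) it is non-unitary with the form indefinite on its cx-relevant $K$-types, and by bottom layer this indefiniteness transfers to a pair in $CXB$ for $J(\lambda,\lambda)$. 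If $1$ does not occur, $L(\lambda)$ consists only of $GL$-factors carrying the non-unitary determinant twists $|\det|^{2c_i}$, and one computes directly that flipping the bottom string $(k_1-\tfrac12,\dots,K_1-\tfrac12)$ with $k_1\ge2$ makes $A_{w_0}$ take opposite signs on $V_{\mathfrak{k}}(0,\dots,0)$ and on a fundamental cx-relevant $K$-type; here the engine is the classical $SL(2,\bC)$ intertwining-operator computation, which shows the spherical $SL(2,\bC)$-module with parameter $\nu$ is non-unitary for $\nu>\rho$, the offending rank-one factor being negative precisely because the string starts at $k_1-\tfrac12\ge\tfrac32$. Either way $J(\lambda,\lambda)$ is not unitary, which is the desired contrapositive.

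The step I expect to be the main obstacle is the transfer of the rank-one discrepancy to $J(\lambda,\lambda)$ itself rather than to some other subquotient. A naive reduction to the $SL(2,\bC)$ sitting in the first coordinate direction does see a non-unitary rank-one factor, but the offending $K$-type then lies a priori in \emph{some} subquotient of the ambient induced module --- indeed for genuinely unitary $\lambda$ that factor can already be non-unitary --- so one is forced to route the argument through $I_{P(\lambda)}$, whose bottom-layer structure (Lemma~\ref{lem:bij}) is exactly what guarantees that every occurrence of a cx-relevant $K$-type sits in the Langlands quotient $J(\lambda,\lambda)$. Making the factorization of $A_{w_0}$ explicit on the \emph{non-petite} $K$-type $V_{\mathfrak{k}}(2,0,\dots,0)$ and checking that the excluded string $\kappa_0$ is precisely the one whose presence in the Levi would have kept the bottom rank-one factor nonnegative is the technical heart of the argument.
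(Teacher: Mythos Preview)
Your contrapositive is correct and the case split works, but you have routed the argument through far more machinery than is needed. The paper's proof is a single line: if $\tfrac12$ is not a coordinate of the (dominant, regular half-integral) $\lambda$, then every coordinate of $\lambda$ is $\ge 1$; the Casimir inequality (in the form of \cite[Lemma~12.6]{V1}) applied directly to the spherical module $J(\lambda,\lambda)$ then forces the Hermitian form to take opposite signs on the trivial $K$-type $V_{\mathfrak{k}}(0,\dots,0)$ and the adjoint $K$-type $V_{\mathfrak{k}}(1,1,0,\dots,0)$. Both lie in $CXB$, and that is the end of the proof. There is no need to pass to $I_{P(\lambda)}$, to invoke Lemma~\ref{lem:bij}, to split according to whether $1$ is a coordinate, or to transfer anything via bottom layer.

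What your detour buys, and what it costs: you are essentially reproving the Casimir inequality by reducing to smaller pieces (the $SO(2N_0+1)$-factor in your Case~1, the rank-one $SL(2,\bC)$ operators in your Case~2) and then pushing the resulting indefiniteness back up through bottom layer. This is not wrong, but it is circular in spirit---the $SL(2,\bC)$ computation you cite in Case~2 \emph{is} the Casimir inequality for $SL(2)$, and in Case~1 the claim that the $SO(2N_0+1)$-factor has indefinite form ``on its cx-relevant $K$-types'' is not something \cite{E} states (Enright classifies, he does not locate the offending $K$-types); you would again fall back on Casimir/Parthasarathy for that factor, at which point you might as well have applied it to $G$ in the first place. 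Your worry about transferring the discrepancy to $J(\lambda,\lambda)$ rather than some other subquotient is legitimate for the indirect route you chose, but simply does not arise in the paper's argument, which works on $J(\lambda,\lambda)$ itself with no ambient induced module in sight.
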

\begin{proof}
  The coordinates on the spherical part of $I_1$ in Equation \eqref{eq:inclusion}
	are all $\geq 1.$ The Casimir inequality implies that the form is indefinite on the adjoint $V_{\mathfrak{k}}(1,1,0,\dots ,0)$ $K-$type and the trivial $K-$type $V_{\mathfrak{k}}(0,\dots,0)$ . These give rise to bottom layer $K-$types of $I_1$,
	and hence the irreducible $J(\lambda,\lambda)$ is not unitary.
\end{proof}

\subsection{Proof of Proposition \ref{thm:unitb} -- $\mathbf{\la=\kappa_0\cup\sig_0}$} \label{sec-bn>k}
If only $\sig_0$ occurs in $\la$, then it is not unitary by Lemma \ref{l:bottomb}.
Furthermore, the case when $\la = \kappa_0\cup\sig_0$ with $K_0\ge N_0$ is unitary. So assume
\begin{equation}
  \label{zerostrings}
 \la = \kappa_0\cup \sigma_0\quad \text{  satisfying }\quad  N_0>K_0\ge 1.
\end{equation}
Let
$$
Ind(\la_t):=\Ind^G_{GL(\sigma_0)\times G(K_0)}(\left(1+t,\dots, N_0+t\right) \otimes \mathrm{triv}),
$$
The signatures and multiplicities of the fundamental cx-relevant $K-$types of the form
$V_{\mathfrak{k}}(1,\dots ,1,0,\dots ,0)$ coincide on $Ind(\la_0)$ and $J(\la,\la)$.
Indeed, $Ind(\la)$ is a homomorphic image of $\Ind_B^G(\la,\la)$, and
the intertwining operator changing $(1,\dots ,N_0)$ to
  $(-N_0,\dots ,-1)$ involves only $(\check\al,w\la)$ which are integers $\ge 2$:
  $$
    \mat{i}{i}\mapsto \mat{-i}{i}
  $$
  The kernel of the intertwining operator has lowest $K-$type of highest weight $(2i)$ for $1 \leq i \leq N_0$.
  So the intertwining operator is an isomorphism on the cx-relevant $K-$types $V_{\mathfrak{k}}(1,\dots ,1,0,\dots ,0)$
(but {\it not} necessarily for $V_{\mathfrak{k}}(2,0\dots ,0)$).
These values remain unchanged for all
$Ind(\la_t)$ with $t \in [0,1/2)$ because the multiplicities do not change. At $t=1/2,$
\begin{align*}
\la_{1/2} &= (3/2, 5/2, \dots, N_0 + 1/2; -K_0+1/2, \dots, -1/2)\\
&= (-N_0-1/2,\dots ,-1/2) \cup (3/2,\dots ,K_0-1/2).
\end{align*}
So the induced module $I_{P(\la_{1/2})}$ defined in Section \ref{sec:4.1} is given by
$$
I_{P(\la_{1/2})} = \Ind^G_{GL(K_0 - 1)\times G(N_0)}((3/2,\dots ,K_0-1/2) \otimes \mathrm{triv}),
$$
and differs from  $Ind(\la_{1/2})$.
More precisely, apart from $J(\la_{1/2},\la_{1/2})$, $Ind(\la_{1/2})$ has a non-spherical irreducible
factor whose parameter is given by
$$
\begin{pmatrix}
  1/2,\dots ,K_0 -1/2;& 3/2,\dots ,K_0 + 1/2;&K_0+3/2,\dots, N_0+1/2\\
  3/2,\dots, K_0 + 1/2; & 1/2,\dots ,K_0-1/2;&K_0+3/2,\dots ,N_0+1/2
\end{pmatrix}
$$
This module has indefinite form on the $K-$types $V_{\mathfrak{k}}(\underset{2K_0}{\underbrace{1,\dots ,1}},0,\dots ,0)$ and
\begin{equation} \label{eq:brelevant}
\begin{cases}
V_{\mathfrak{k}}(\underset{ 2K_0}{\underbrace{1,\dots ,1}},1) & \text{if}\ N_0 = K_0 + 1;\\
  V_{\mathfrak{k}}(\underset{ 2K_0}{\underbrace{1,\dots ,1}},1,1,0,\dots ,0) & \text{otherwise}
	\end{cases}
	\end{equation}
Indeed, the second $K-$type is bottom layer for the parabolic subgroup with Levi
component $GL(K_0) \times G(N_0-K_0)$. The \textit{spherical part} of
the parameter $
\begin{pmatrix}
  K_0+3/2,\dots, N_0+1/2\\
  K_0+3/2,\dots ,N_0+1/2
\end{pmatrix}
$ is a finite dimensional representation of $G(N_0-K_0)$, so the form is
indefinite on the trivial and adjoint $K-$types of $G(N_0-K_0)$.

Consequently, by semicontinuity of
signatures, $Ind(\la_0)$ and $J(\la,\la)$ also have indefinite form
on the $K-$types given in \eqref{eq:brelevant}.

\subsection{Proof of Proposition \ref{thm:unitb} -- Other Strings}\label{sec:additionalb} Assume  $\la$ contains strings other
than $\kappa_0$ and $\sigma_0$. We do an induction upward on the length of
the parameter, downward on the number of strings.

Assume  there is a $\kappa_i = (k_i-1/2,\dots ,K_i-1/2)$ with $i>0$ or $\sig_j = (n_j,\dots ,N_j)$ with $j>0$.  Replace it by $(k_i-1/2+t,\dots ,K_i-1/2+t)$ (or $(n_j+t,\dots ,N_j+t)$),
  and denote the new parameter by $\la_t.$  At $t=0$, $I_{P(\la)}=I_{P(\la_{0})},$
  and the signatures of cx-relevant $K-$types do not change for $0\le
  t<1/2.$ At $t=1/2,$ if the induction hypothesis (condition for the
  form to be indefinite on the cx-relevant $K-$types) holds for
  $J(\la_{1/2},\la_{1/2})$ we conclude that $J(\la,\la)$ is not
  unitary, with form indefinite on the cx-relevant $K-$types. It may
  happen that $I_{P(\la_{1/2})}$ is unchanged, and we can continue to
  deform $t$ upward. $I_{P(\la)}$ may be  unchanged as
  $t\longrightarrow\infty.$ In this case the form is indefinite on
  the adjoint $K-$type $V_{\mathfrak{k}}(1,1,0,\dots ,0)$. We call this an initial
  case. The other case is when the spherical module
  $J(\la_{1/2},\la_{1/2})$ is unitary. This is the case
  $\sig_0\cup\kappa_0$ with $K_0\ge N_0.$ Note that it includes the
  case when the spherical module is the trivial representation.

  In summary, these cases, which we call \textit{initial cases} are
 \begin{itemize}
\item[(a)] There is a string $\kappa_i$ or $\sig_j$ with $i,j>0$ such that
  $P(\la_t)$ does not change as $t\to\infty$,
\item[(b)] The strings are
  $$
  (-K_0 +1/2, \dots, -1/2; -N_0, \dots, -1),\quad \text{with}\ K_0<N_0$$
  as in the previous section.
\item[(c)] The strings are
$(-K_0 +1/2, \dots, -1/2; -N_0, \dots, -1) \cup \xi$ satisfying
$$
\xi = (K_0,\dots , K_1) \text{ or } (N_0+1/2,\dots , N_1-1/2),
$$
\end{itemize}
so that the deformation of $\xi$ to $t=1/2$ yields a unitary spherical
module. This means that $K_1\geq N_0$ in one case, $K_0\ge N_1$ in  the
other case.
See Example \ref{eg:deforms3} for more
details. In Case (a), as already mentioned, the Casimir inequality implies that the spherical
irreducible module at $t=1/2$ has indefinite form on the trivial and
adjoint $K-$types $V_{\mathfrak{k}}(0,\dots ,0)$ and $V_{\mathfrak{k}}(1,1,0,\dots ,0).$

Case (b) was discussed in the previous section.

For Case (c), we give details for  $\xi =(K_0)$. The other $\xi$ are
similar.  $I_{P(\la_{1/2})}$ has another irreducible factor with
parameter containing
  $$
  \begin{pmatrix}
   - K_0+1/2&-K_0-1/2& K_0 - 3/2& \dots&1/2\\
  -K_0-1/2 &-K_0+1/2& K_0 - 3/2&\dots& 1/2
  \end{pmatrix}
  $$
with the rest of the  spherical part formed of  integer coordinates coming from $\sig_0.$

The lowest $K-$type is $V_{\mathfrak{k}}(1,1,0,\dots ,0)$ and
$V_{\mathfrak{k}}(2,0,\dots ,0)$ is bottom layer. Since for such a parameter the form
on the $GL(2)-$factor is indefinite on $(1,1)$ and
$(2,0)=(1,1)+(1,-1)$, semicontinuity of the signature implies the same
for the parameter at $\la.$

\medskip
The proof of Proposition \ref{thm:unitb} is now complete. \qed

\begin{example} \label{eg:deforms3}
Let $\la = (-11/2,-9/2,-7/2,-5/2,-3/2,-1/2;-1) \cup (3,4) \cup (6)$.
Note that $\kappa_0$ is longer than $\sigma_0$. Deform all $\sigma_i$ into $\kappa_i$ for $i > 0$:
\begin{align*}
\la = &\ (-11/2,-9/2,-7/2,-5/2,-3/2,-1/2;-1) \cup {\bf (3,4) \cup (6)}\\
\longrightarrow &\ (-11/2,-9/2,-7/2,-5/2,-3/2,-1/2;-1) \cup (7/2,9/2) \cup (13/2)\\
= &\ (-13/2,-11/2,-9/2,-7/2,-5/2,-3/2,-1/2;-1) \cup (7/2,9/2)
\end{align*}
Deform the new $\kappa_i$ for $i > 0$ and get
\begin{align*}
&\ (-13/2,-11/2,-9/2,-7/2,-5/2,-3/2,-1/2;-1) \cup {\bf (7/2,9/2)}\\
\longrightarrow &\ (-13/2,-11/2,-9/2,-7/2,-5/2,-3/2,-1/2;-1) \cup (13/2,15/2)\\
= &\ (-15/2,-13/2,-11/2,-9/2,-7/2,-5/2,-3/2,-1/2;-1) \cup {\bf (13/2)}\\
\longrightarrow &\ (-17/2,-15/2,-13/2,-11/2,-9/2,-7/2,-5/2,-3/2,-1/2;-1) \cup (9)
\end{align*}
and we are in Case (c) above.
\end{example}

\subsection{Non-spherical Case} Now we study the case when $\mu_1 > 0$. Then the parameter $(\la_{rel},-s\la_{rel})$ does not have a
$\kappa_0$, or else the regularity condition is violated.
Consider the spherical part of the parameter. It only contains $\kappa_i$ for $i > 0$ and
$\sigma_j$ for $j \geq 0$.  By Lemma \ref{l:bottomb} this spherical parameter yields
indefinite form on $V_{\mathfrak{k}}(1,1,0,\dots,0)$ and $V_{\mathfrak{k}}(0,\dots,0)$,
both are bottom layer in $J(\la_{rel},-s\la_{rel})$. Therefore there cannot be any spherical parameter, and the only unitary case is $(\la_{rel},-s\la_{rel}) = (1/2, -1/2)$.

\subsection{Spin Groups} \label{sec:spinb} In this section, we give a brief idea on how our results can be extended to Spin groups $G=Spin(2n+1,\bC)$. We only
consider genuine representations of $G$, i.e. representations
whose $K-$types have highest weights with
coordinates of the form $\mathbb{N} + \frac12$ only. As $\rho=(m-1/2,\dots
,1/2)$, so $2\la=\{\eta-\rho\}+\rho$ must have
 coordinates of the form $\mathbb{N} + \frac12$ only. The integral system for $\la$ is type A.

We study the case when the lowest
$K-$type of $J(\la_L,\la_R)$ is
$$
Spin = V_{\mathfrak{k}}(\frac12,\dots ,\frac12).
$$
The parameter is
\begin{equation}
  \label{eq:spinparameter}
  \begin{aligned}
&\la_L=&&(1/4,\dots ,1/4)&&+(\nu_1,\dots ,\nu_k,-\nu_k,\dots
,-\nu_1)\\
&\la_R=&&\left(-1/4,\dots ,-1/4\right)&&+(\nu_1,\dots ,\nu_k,-\nu_k,\dots
,-\nu_1)\\
&\text{ or } &&&&\\
&\la_L=&&(1/4,\dots ,1/4)&&+(\nu_1,\dots ,\nu_k,0,-\nu_k,\dots
,-\nu_1)\\
&\la_R=&&(-1/4,\dots ,-1/4)&&+(\nu_1,\dots ,\nu_k,0,-\nu_k,\dots
,-\nu_1)
  \end{aligned}
\end{equation}
The symmetry $\nu_i\longleftrightarrow -\nu_i$ follows from the assumption that
the parameter must be Hermitian.
Since $2\la_L=(\frac12+2\nu_1,\dots ,\frac12-2\nu_1)$ must be regular
integral consisting of half-integers, it follows that
\begin{equation}
  \label{eq:integrality}
    2\nu_i\in \bZ \text{ for all } i,
\end{equation}
satisfying $\nu_i\pm \nu_{j}\ne 0,$ and $\nu_i\ne 0.$

\smallskip
Separate the $\nu_i$ into integers
$\nu_a$  and half-integers $\nu_b.$ The Hermitian property implies
that $\nu_a$ must be conjugate to $-\nu_a$ by the symmetric group, and
similarly for $\nu_b.$

There are two finite
dimensional Hermitian representations $F_a$ and $F_b$ of Type A (with lowest
$K-$types $V_{\mathfrak{u}}(\frac12,\dots,\frac12)$) so that
\begin{equation}
  \label{eq:spinlkt}
J(\la_L,\la_R)=\Ind^G_{GL\times GL}(F_a\otimes F_b).
\end{equation}

The restriction of $V_{\mathfrak{k}}(\frac32,\frac12,\dots ,\frac12)$ to $GL$ contains
$$
V_{\mathfrak{u}}(\frac32,\frac12,\dots ,-\frac12)=V_{\mathfrak{u}} (\frac12,\dots ,\frac12)\otimes V_{\mathfrak{u}}(1,0,\dots,0,-1).
$$
Therefore, as in Proposition \ref{p:lambdar},
the Hermitian form of $J(\la_L,\la_R)$ on the $K-$types $V_{\mathfrak{k}}(\frac32,\frac12,\dots
,\frac12,\frac12)$ and $V_{\mathfrak{k}}(\frac12,\dots ,\frac12)$ is indefinite
 unless $F_a,F_b$ are unitary characters. In the case when there is only $F_a$ or $F_b$ in \eqref{eq:spinlkt},
we obtain the genuine unipotent representation with infinitesimal character given in \eqref{eq:B.2}.


\section{Proof of Theorem \ref{thm:j2} -- Type C}
Let $G = Sp(2m,\mathbb{C})$ and  $K = Sp(2m)$. The $K-$types have highest weights $\eta$
formed of integers only. Since $\rho=(m,\dots ,1),$
$2\la=\{\eta - \rho\} + \rho$ must have positive integer coordinates only.
So $\la$ must have  integers and half integer coordinates only.
Since $\la$ is regular half-integral but not integral, the integral
system determined by $\la$ is type $B \times D.$

\subsection{Spherical Representations}
\begin{proposition} \label{thm:unitc}
Let $\la$ be regular half-integral. The spherical irreducible module $J(\la,\la)$ is unitary if and only if it is unipotent, i.e. the parameter is
$$
  \la=\left(-K_0+\frac12,\dots,-\frac12\right)\quad \text{ or } \quad
  \la=\left(-N_0,\dots,-1\right)
$$
The first representation is the spherical component of the Oscillator
representation attached to the
nilpotent orbit $[2^11^{2N_0-2}]$, and the second case is the trivial
representation attached to $[1^{2N_0}]$.

When not unitary, the form is indefinite on the set of cx-relevant $K-$types with highest weights
$$
CXC:=\{(0,\dots ,0),\quad (1,1,0,\dots ,0),\quad (2,0,\dots ,0)\}.
$$
\end{proposition}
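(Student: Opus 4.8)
The plan is to follow the strategy of Section~\ref{sec:4.1} for Type~B, adapting every step to the integral system of type $B\times D$ determined by $\la$. First I would dispose of the ``if'' direction. The string $\la=(-K_0+1/2,\dots,-1/2)$ is the spherical parameter for $Sp(2K_0,\bC)$ whose irreducible quotient is the spherical constituent of the Segal--Shale--Weil representation, a $\Theta$-lift of a character of $O(1)$, hence unitary; the string $\la=(-N_0,\dots,-1)$ is the parameter of the trivial representation of $Sp(2N_0,\bC)$, which is unitary. For a general $\la$ of the stated form, induction in stages produces a unitarily induced module, and by Theorem~14.1 of \cite{B1} this module equals $J(\la,\la)$, so $J(\la,\la)$ is unitary.

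For the converse I would construct, exactly as in Steps~(i)--(iii) of Section~\ref{sec:4.1}, a parabolic $P(\la)=L(\la)U(\la)$ whose Levi is a product of $GL$-factors indexed by the maximal integer and half-integer strings of $\la$ together with one bottom factor of type~$C$ carrying the oscillator or trivial spherical unipotent representation, so that $J(\la,\la)$ is the unique spherical irreducible constituent of $I_{P(\la)}$. The analog of Lemma~\ref{lem:bij} --- that the cx-relevant $K$-types in $CXC$ occur with equal multiplicities in $I_{P(\la)}$ and in $J(\la,\la)$ --- is proved as there: the long intertwining operator $A_{w_0}\colon I_B(\la,\la)\to I_B(-\la,-\la)$ factors through $I_{P(\la)}$, and on the cx-relevant $K$-types the remaining operator is injective because, under conditions~\eqref{eq:medium} and \eqref{eq:extraB}, the $SL(2)_\al$-operators used to flip the $GL$-strings act as isomorphisms on the restrictions of these $K$-types; the only subtlety, as in Type~B, is that $(2,0,\dots,0)$ is not petite, so one checks the relevant $SL(2)$-operator on it directly and notes that the needed eigenvalue bound holds for long roots. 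I would also prove the analog of Lemma~\ref{l:bottomb}: if the bottom string ($\kappa_0$ or $\sig_0$) fails to appear, the spherical part of $I_1$ in~\eqref{eq:inclusion} has all coordinates $\ge 1$, so the Casimir inequality makes the form indefinite on the trivial $K$-type $(0,\dots,0)$ and the adjoint $K$-type $(2,0,\dots,0)$, both bottom layer, whence $J(\la,\la)$ is not unitary.

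Next comes the deformation induction, upward on the length of $\la$ and downward on the number of strings, as in Section~\ref{sec:additionalb}. Deforming a non-bottom $GL$-string $\kappa_i$ or $\sig_j$ ($i,j>0$) to the right by $t$, the signatures and multiplicities of the cx-relevant $K$-types are constant for $t\in[0,1/2)$. If $P(\la_t)$ never changes as $t\to\infty$, the Casimir inequality forces the form to be indefinite on $(0,\dots,0)$ and $(2,0,\dots,0)$, and semicontinuity transfers this back to $t=0$. Otherwise, at the first $t_0$ where two strings merge, either $J(\la_{t_0},\la_{t_0})$ falls under the induction hypothesis (done by semicontinuity), or it is unitary; the latter produces a short list of \emph{initial cases} --- essentially a $\kappa_0$- and/or $\sig_0$-string satisfying the unitary length inequalities, possibly with one extra deformed string --- to be handled individually. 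In each such case, mimicking Section~\ref{sec-bn>k} and Case~(c) there, $Ind(\la_{t_0})$ acquires a non-spherical constituent containing an indefinite pair of cx-relevant $K$-types, typically a $GL(2)$-factor on which the form has opposite signs on $(1,1)$ and $(2,0)=(1,1)+(1,-1)$, promoted through bottom-layer $K$-types; semicontinuity then pushes the indefiniteness down to $\la$ itself. The remaining non-spherical case $\mu_1>0$ is disposed of exactly as in the corresponding subsection for Type~B.

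The main obstacle I anticipate is the bookkeeping: since the integral system is $B\times D$ rather than $C\times C$, the parity assignment of the strings, the precise form of conditions~\eqref{eq:medium}--\eqref{eq:extraB} they satisfy, and above all the enumeration of the initial cases --- which string lengths make the bottom $C$-factor the oscillator versus the trivial representation, and where the distinguished $0$-coordinate forced by type~$D$ is placed --- all differ from Type~B and require careful attention. I expect, however, that no genuinely new idea beyond those already in Section~\ref{sec:4.1} is needed.
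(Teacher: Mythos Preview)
Your proposal is correct and follows essentially the same deformation strategy as the paper. One Type~C-specific correction: unlike Type~B, the two-string base case $\la=\kappa_0\cup\sigma_0$ with both strings non-empty is \emph{always} non-unitary --- there is no analogue of the $N_0\le K_0$ ``length inequality'' --- so your phrase ``unitary length inequalities'' and your description of the bottom $C$-factor as ``carrying the oscillator or trivial spherical unipotent representation'' need adjustment; the paper handles $\kappa_0\cup\sigma_0$ by deforming one of the two strings as a $GL$-factor, locating an indefinite non-spherical constituent at $t=1/2$, and concluding that the form is indefinite on $(0,\dots,0)$ and $(1,1,0,\dots,0)$.
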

Unlike Types B or D, only (1,1,0,\dots,0), rather than (1,\dots,1,
  0,\dots,0) suffices.
The proof will be given in the next subsection. The unipotent
representation is unitary because, when not the trivial module, it is
the spherical component of the Oscillator representation.

As in the case of Type B, we construct a parabolic subgroup $P(\la)=L(\la)U(\la)$ and an induced module
$I_{P(\la)}$ so that $J(\la,\la)$ is the spherical irreducible
  factor in $I_{P(\la)},$ and the multiplicities of the cx-relevant
  $K-$types coincide in the two modules. Write $\la$ dominant for the standard
  positive system, i.e.
	$$\la = (\dots \la_i\ge \la_{i+1} \ge \dots \geq 0), \quad 2\la_i \in \mathbb{Z}.$$
  Since the parameters we are going to study are obtained by deforming a regular parameter {\it upward},
	we can further assume that all $\la_i$ are positive.
\begin{itemize}
\item[(i)]
If $1/2$ is a coordinate of $\la,$ form the longest
\textit{string}
\begin{equation*}
  \kappa_0=(-K_0 +1/2,\dots ,-1/2)
\end{equation*}
such that all the half-integers starting from $1/2$ to $K_0-1/2$ are
coordinates of  $\la$, but $K_0+1/2$ is not. If the coordinate $1$
occurs, form the longest string
$$\sig_0=(-N_0,\dots ,-1)$$
where $1,\dots ,N_0$ occur as coordinates in $\la,$ but $N_0+1$ does not.
Add a factor of $L(\la)$ of type $
G(K_0+N_0)=Sp(2K_0+2N_0)$ and the spherical irreducible representation with parameter
$$
\begin{pmatrix}
  -K_0+1/2&,\dots ,&-1/2&;&-N_0,&\dots,&-1\\
  -K_0+1/2&,\dots ,&-1/2&;&-N_0,&\dots,&-1
\end{pmatrix}
$$
If $1/2$ is not a coordinate, let $k_1-1/2>0$ be the smallest
half-integer coordinate, and form the longest string
$\kappa_1=(k_1-1/2,\dots ,K_1-1/2)$ increasing by 1, as before. Add a
factor  $GL(K_1-k_1+1),$ and the 1-dimensional representation with parameter
$$
\begin{pmatrix}
  k_1-1/2&,\dots ,&K_1-1/2\\
  k_1-1/2&,\dots ,&K_1-1/2
\end{pmatrix}
$$
to $M(\la)$. Similarly if $1$ does not occur as a coordinate, form $\sig_1=(n_1,\dots ,N_1)$ and add a factor $GL(N_1-n_1+1)$ to the Levi component $M(\la).$
\item[(ii)] Remove the coordinates in Step (i) from $\la$, and repeat
  on the remainder until there
  are no half-integer coordinates left. Since the assumption was that at most one
  coordinate was equal to $1/2,$ only $GL-$factors are created.
\item[(iii)] Repeat Steps (i) and (ii) on the integer coordinates
  until there are none left.
\end{itemize}
The process produces a parabolic subgroup, and an induced module on its Levi component. The Levi
component is
\begin{equation} \label{eq:deformc}
\prod_{i > 0} GL(\sigma_j) \times \prod_{j>0} GL(\kappa_i)\times G(K_0+N_0).
\end{equation}
As in the case of Type B, we are interested in the cases when the \textit{strings} satisfy the properties:
\begin{equation}
  \label{eq:mediumC}
\begin{cases} k_i>2 &\text{if }\ 1/2 \text{ is a coordinate},\\ k_i\ge 2 &\text{otherwise}\end{cases}
\quad \text{and} \quad \begin{cases} n_j> 2 &\text{if }\ 1 \text{ is a coordinate},\\ n_j\ge 2 &\text{otherwise}\end{cases}
\end{equation}
along with the nested condition:
\begin{equation}
  \label{eq:extraC}
\begin{cases} k_{i+1}-K_i\ge 2,\ \text{or}\\
k_i\le k_{i+1}\le K_{i+1}\le K_i,\end{cases} \quad \text{and} \quad
\begin{cases} n_{j+1}-N_j\ge 2,\ \text{or} \\
n_{i}\le n_{i+1}\le N_{i+1}\le N_{i} \end{cases}.
\end{equation}
The main property of the cx-relevant $K-$types is the following Lemma.
\begin{lemma}
  Let $\la$ be such that \eqref{eq:mediumC} and \eqref{eq:extraC} are satisfied. The
  multiplicities of the cx-relevant $K-$types is the same in
  $I_{P(\la)}$ and $J(\la,\la)$.
\end{lemma}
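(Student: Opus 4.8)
The plan is to mirror the proof of Lemma \ref{lem:bij} for Type B, adapting each step to the symplectic setting. Recall $\la$ is dominant and $J(\la,\la)$ is the image of the long intertwining operator $A_{w_0}: I_B(\la,\la) \to I_B(-\la,-\la)$, and that $I_{P(\la)}$ sits as a homomorphic image of $I_B(\la,\la)$, so the long operator factors as $I_B(\la,\la) \to I_{P(\la)} \to I_B(-\la,-\la)$. It suffices to show that the second map is injective on the cx-relevant $K$-types $V_\fk(0,\dots,0)$, $V_\fk(1,1,0,\dots,0)$, $V_\fk(2,0,\dots,0)$. As in Type B, one embeds into a larger induced module to factor this ``flip'' operator (sending the coordinates of each $\kappa_i$, $\sigma_j$ into their negatives) into a product of rank-one intertwining operators induced from $SL(2)_\alpha$'s; the task is then to check each such rank-one factor is an isomorphism on the restrictions of the three cx-relevant $K$-types to the relevant $SL(2)$.

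First I would record the $SL(2)$-decomposition: for a long root $\alpha$, the restrictions of $V_\fk(1,1,0,\dots,0)$ and $V_\fk(0,\dots,0)$ to $SL(2)_\alpha$ involve only $SL(2)$-types on which the value of $\check\alpha$ on the weight is $\le 2$ (petite/single-petaled behavior), whereas $V_\fk(2,0,\dots,0)$ has a component where $\check\alpha$ takes the value $3$ on a long root — this is exactly the non-petite phenomenon flagged in the Type B proof. For short roots, and for long roots on the petite types, the rank-one operator is an isomorphism on these restrictions as long as the relevant parameter $(\check\alpha, w\la)$ avoids the bad values; condition \eqref{eq:mediumC} (namely $k_i > 2$ when $1/2$ is a coordinate, $k_i \ge 2$ otherwise, and similarly for $n_j$) is precisely what guarantees this. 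The subtle point specific to Type C, noted in the remark after Proposition \ref{thm:unitc}, is that here one only needs $(1,1,0,\dots,0)$ rather than the full string $(1,\dots,1,0,\dots,0)$; this makes the $SL(2)$-calculations for the long root slightly simpler than in Type B, but the non-petite value $3$ still occurs for $(2,0,\dots,0)$ on a long root, so that case must be handled separately exactly as before — by arguing that the relevant rank-one operator is still an isomorphism on that restriction when \eqref{eq:mediumC} holds.

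Next I would verify that the nested condition \eqref{eq:extraC} ensures the factorization of the flip operator into rank-one pieces is valid even when $\la$ is not regular (which happens after the upward deformations, cf. Example \ref{eg:deforms3}): overlapping nested strings $k_i \le k_{i+1} \le K_{i+1} \le K_i$ contribute operators that are isomorphisms on the cx-relevant $K$-types because the corresponding $(\check\alpha, w\la)$ values are either $0$ (giving an isomorphism trivially, no hyperplane crossed on these $K$-types) or large enough to avoid the bad set, while well-separated strings ($k_{i+1} - K_i \ge 2$) behave as in the regular case. The placement of $\kappa_0$ and $\sigma_0$ inside the $G(K_0+N_0)$ factor of the Levi \eqref{eq:deformc} is again essential: those coordinates are never flipped, so no rank-one operator with a potentially bad small parameter arises from them.

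The main obstacle I anticipate is the $V_\fk(2,0,\dots,0)$ case on long roots, where the single-petaled machinery of \cite{B2} does not directly apply: one must verify by an explicit rank-one ($Sp(2)$ or $SL(2)$) computation that the intertwining operator, with the parameter constrained by \eqref{eq:mediumC}, remains an isomorphism on the relevant two-dimensional $SL(2)$-isotypic piece of $V_\fk(2,0,\dots,0)$. Everything else is bookkeeping: tracking which $SL(2)_\alpha$'s appear in the factorization of the flip, confirming their parameters lie outside the finite bad set dictated by \eqref{eq:mediumC} and \eqref{eq:extraC}, and invoking the standard bottom-layer/multiplicity-preservation principle to transfer the conclusion from $I_{P(\la)}$ to $J(\la,\la)$.
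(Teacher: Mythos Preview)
Your proposal is correct and follows essentially the same approach as the paper: factor the long intertwining operator through $I_{P(\la)}$ and verify that the residual ``flip'' operators, decomposed into $SL(2)_\al$ pieces, are isomorphisms on the restrictions of the cx-relevant $K$-types, with conditions \eqref{eq:mediumC} and \eqref{eq:extraC} controlling the parameters.

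One small correction: the obstacle you anticipate with $V_\fk(2,0,\dots,0)$ on a long root does \emph{not} materialize in Type C. For $\al = 2e_i$ the coroot is $\check\al = e_i$, so $\langle (2,0,\dots,0),\check\al\rangle \le 2$; likewise for the short roots $e_i\pm e_j$. Thus in Type C all three cx-relevant $K$-types have $\langle\mu,\check\al\rangle\le 2$ for every root, and the single-petaled/petite argument from \cite{B2} applies uniformly without the separate rank-one computation you flagged. (The non-petite issue in Type B comes from the short root $e_i$ with coroot $2e_i$, giving $\langle(2,0,\dots,0),2e_1\rangle=4$; there is no such root in Type C.) The paper's proof records exactly this simplification: ``the coordinates of the highest weights of the $K$-types are $\le 2$.''
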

\begin{proof}
 The proof follows the one for the analogous result in Type B. We have
 to show that certain $SL(2)_\al-$operators are isomorphisms. For the
 cx-relevant $K-$types this follows from conditions \eqref{eq:mediumC} and \eqref{eq:extraC}
 and the fact that the coordinates of the highest weights of the $K-$types are $\le 2.$
\end{proof}

\subsection{Proof of Proposition \ref{thm:unitc} -- $\la = \mathbf{\sig_0} \cup \mathbf {\kappa_i}$ or $\mathbf{\kappa_0} \cup \mathbf {\sigma_i}$}  \label{sec:unitc1}
If $\la$ contains only $\sig_0 = (-N_0, \dots, -1)$ or $\kappa_0 = (-K_0 + 1/2, \dots, -1/2)$, the parameter is unitary. So consider $\la = \sig_0 \cup \kappa_i$ or $\kappa_0 \cup \sigma_i$ for $i = 0$ or $1$, and the induced module
$$
\Ind_{GL(K_i)\times G(N_0)}^G\left(\kappa_i \otimes(-N_0,\dots, -1)\right) \quad \text{or}\quad \Ind_{GL(N_i)\times G(K_0)}^G\left(\sigma_i \otimes(-K_0 + 1/2,\dots, -1/2)\right).
$$
If $i = 1$, i.e. $k_1 \geq 3/2$ or $N_1 \geq 2$, then the above induced modules admit deformations
where the multiplicities of all cx-relevant $K-$types coincide with that of $J(\la,\la)$ for $0 \leq t < 1/2$.
If $i = 0$, the deformations still preserve multiplicities of the cx-relevant
$K-$types of the form $V_{\mathfrak{k}}(1,\dots,1,0,\dots,0)$. There are two cases:

\smallskip
\noindent (a) Suppose If $k_i - N_0 > 1$ or $n_i - K_0 \geq 1$ (so that $i = 1$), or equivalently
one has $|n - k| \geq 3/2$ for all $n \in \sigma_i$ and $k \in \kappa_j$,
the deformations on $\kappa_1$ or $\sigma_1$
does not produce new $P(\la)$ for all $t \geq 0$. So by Casimir inequality the form is indefinite on
the trivial and the adjoint $K-$type $V_{\mathfrak{k}}(2,0,\dots, 0)$.

\smallskip
\noindent (b) Otherwise,
At $t=1/2,$ the spherical parameter acquires a new $\sig_1$ or $\kappa_1$. As in Type B, we can apply induction hypothesis
and reduce to the \textit{initial cases} when the
spherical parameter at $t=1/2$ is either the trivial representation, or the spherical
Oscillator representation. These are
$$
(N_0+1/2,\dots , N_1+1/2) \cup (-N_0,\dots ,-1) \quad \text{or} \quad  (K_0,\dots , K_1) \cup (-K_0 + 1/2,\dots , -1/2)
$$
The argument for type B applies. At $t=1/2$ there is another factor
\begin{equation} \label{eq:nonunitc}
\begin{aligned}
&\pmat
&K_0+1/2&-K_0+1/2& K_1+1/2&\dots &K_0+3/2&K_0-3/2&\dots &1/2\\
&K_0-1/2&-K_0-1/2&K_1+1/2&\dots &K_0+3/2&K_0-3/2&\dots &1/2
\epmat\\
&\text{ respectively }\\
&\pmat
&N_0+1&-N_0& N_1+1&\dots &N_0+2&N_0-1&\dots &1\\
&N_0-1&-N_0-1&N_1+1&\dots &N_0+2&N_0-1&\dots &1
\epmat
\end{aligned}
\end{equation}
The $K-$types $V_{\mathfrak{k}}(2,0,\dots ,0)$ and $V_{\mathfrak{k}}(1,1,0,\dots ,0)$  are
bottom layer for the parameter in \eqref{eq:nonunitc}, and the form is indefinite. In this case one
can in fact show that at $t=0$ the form is indefinite on
$V_{\mathfrak{k}}(1,1,0,\dots ,0)$ and $V_{\mathfrak{k}}(0,\dots ,0).$ The reason is that one
can deform the string $\kappa_1$ or $\sig_1$ all the way to a place
where the module is unitarily induced irreducible, and $V_{\mathfrak{k}}(2,0,\dots
,0)$ occurs with full multiplicity in the spherical irreducible
module. So its sign must be the same as that of $V_{\mathfrak{k}}(0,\dots ,0).$ Therefore,
$J(\la,\la)$ has
indefinite forms on $V_{\mathfrak{k}}(1,1,0,\dots ,0)$ and $V_{\mathfrak{k}}(0,\dots ,0)$.

\begin{remark} \label{rmk:generalc}
More generally, if $\la = \sigma_i \cup \kappa_j$ satisfies
$k_j \leq N_i+1 \leq K_j$ or $n_i \leq K_j \leq N_i$, i.e. there
are $n \in \sigma_i$ and $k \in \kappa_j$ such that $|n - k| = 1/2$,
then one can deform both strings $\sigma_i$, $\kappa_j$ downwards simultaneously
$$\sigma_i \cup \kappa_j \mapsto \sigma_i \cup \kappa_j - (t, \dots, t),$$
until it reaches Case (b) above. Then one can conclude that $J(\la,\la)$ has
indefinite forms on $V_{\mathfrak{k}}(1,1,0,\dots ,0)$ and $V_{\mathfrak{k}}(0,\dots ,0)$.
\end{remark}

\subsection{Proof of Proposition \ref{thm:unitc} -- Other Strings} We do an induction, downward on the number of strings, upward on the length of the parameter, as in type B. The claim is that if there is a string $\kappa_1$ or $\sigma_1,$ the spherical module cannot be unitary.

For $i > 0$, let $\xi=(k_i-1/2,\dots ,K_i-1/2)$ or $(n_i,\dots ,N_i)$ be a string. Deform upward $\xi_t = (k_i-1/2+t,\dots , K_i-1/2+t)$ or $(n_i+t,\dots ,N_i+t).$ The signatures and multiplicities of the all cx-relevant $K-$types do not change for $0\le t<1/2.$ At $t=1/2,$ one of several cases may occur:
\begin{enumerate}
\item[(a)] There is no $\xi$, that is, $\la = \kappa_0 \cup \sig_0$.
	We have dealt with this in the previous section.
\item[(b)] {$P(\la_{1/2}) = P(\la_{0})$}.
  Continue deforming upwards. If no change occurs as $t\to\infty$ (this includes Case (a) in Section \ref{sec:unitc1}), the form is indefinite on $V_{\mathfrak{k}}(0,\dots ,0)$
  and the adjoint $K-$type $V_{\mathfrak{k}}(2,0,\dots, 0)$.
\item[(c)] {$P(\la_{1/2}) \ne P(\la_{0})$}. Then we are in the setting of Remark \ref{rmk:generalc},
and the form is indefinite on $V_{\mathfrak{k}}(0,\dots ,0)$
  and $V_{\mathfrak{k}}(1,1,\dots, 0)$.
\end{enumerate}
The cases when indefiniteness is first detected on the
$K-$type $V_{\mathfrak{k}}(2,0,\dots ,0)$ rather than $V_{\mathfrak{k}}(1,1,0,\dots,0)$
is when the entries of two different strings in $\la$ differ by at least $1$.
For example, this holds for the strings $\la = (21/2, 23/2) \cup (8,9) \cup (7/2,9/2,11/2)$.

\subsection{Non-spherical Case}\label{sec:nonshc} Consider the case
$\mu_1=1$ and the parameter contains
$
\begin{pmatrix}
  1/2\\-1/2
\end{pmatrix}.
$
As before, there cannot be a $\kappa_0$ present.
The fundamental cx-relevant $K-$types for the spherical parameter
produce bottom layer $K-$types.  We are reduced to the cases when these bottom layer $K-$types
do not detect non-unitarity. By the last paragraph in the previous section,
this is the case when there is a $\kappa_i, \sig_j$ with $i,j > 0$
in the spherical parameter
deforming to $\infty$. The case when there is only $\kappa_1=(3/2,\dots , K_1-1/2)$ in the spherical parameter
gives a unitary representation.
We are reduced to the case when there is another string
$\kappa_i\ge 5/2$ and/or $n_j\ge 2$ deforming to $\infty$. The $K-$types
$$
V_{\mathfrak{k}}(1,0,\dots ,0)\quad V_{\mathfrak{k}}(2,1,0,\dots ,0)
$$
occur with the same multiplicities in the unitarily induced module from
$GL(1)\times G(\mu_0)$ with $J(\la^0,\la^0)$ on the $G(\mu_0)-$factor,
and in $J(\la,-s\la).$  The form is indefinite
on these $K-$types, since they restrict to
$K\cap M-$types for which the form on $J(\la^0,\la^0)$ is indefinite.

\section{Proof of Theorem \ref{thm:j2} -- Type D}
Let $G = SO(2m,\mathbb{C})$ and $K = SO(2m)$. The $K-$types have highest weight with
integer coordinates only.  Since $\rho = (m-1, \dots, 1,0)$, it
follows that $2\la=\{\eta - \rho\} + \rho$ has integer coordinates
only. So $2\la$ is  regular integral it has integer coordinates only. Since $\la$ is not assumed integral, its coordinates are integers and half integers, and the integral system is
of type $D\times D.$

\subsection{Spherical Representations}
\begin{proposition} \label{thm:unitd}
Let $\la$ be regular half-integral. The spherical irreducible module $J(\la,\la)$ is unitary if and only if it is unipotent, i.e.
$$
\la=\left(-K_0+\frac12,\dots,-\frac12; -N_0+1,\dots, -1,0 \right)\quad\text{ satisfying } \quad N_0 \ge K_0.
$$
When $K_0>0,$ the representation is attached to the
nilpotent orbit $[3^12^{2K_0}1^{2N_0-2K_0-1}]$.

When $K_0=0,$ the nilpotent orbit is the  trivial one.

When not unitary, the form is indefinite on the set of cx-relevant $K-$types with highest weights
$$
CXD:=\{ (0,\dots ,0),\quad (1,\dots ,1,0,\dots ,0),\quad
(2,0,\dots ,0)\}.
$$
\end{proposition}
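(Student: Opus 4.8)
The plan is to follow, in the type $D\times D$ integral system, the deformation argument used for Type B in Sections \ref{sec:4.1}--\ref{sec:additionalb} and for Type C in Section \ref{sec:unitc1}; only the genuinely new points need comment. Unitarity of the claimed parameter is immediate: when $K_0=0$ the parameter $\la=(-N_0+1,\dots,-1,0)$ is $\rho$ for $SO(2N_0)$, so $J(\la,\la)$ is the trivial representation; when $K_0>0$, $J(\la,\la)$ is an iterated $\Theta$-lift of a one-dimensional representation (through $Sp(2K_0,\bC)$ and a spherical oscillator representation, in the spirit of \eqref{eq:D.1}), realized so as to land in the stable range, hence unitary, with the $K$-spectrum recorded in Theorem \ref{thm:unitarydual}, Type $D_n$.

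Next, exactly as in \eqref{eq:deformb} and \eqref{eq:deformc}, from a dominant half-integral $\la$ I would build a parabolic $P(\la)=L(\la)U(\la)$ with Levi factor $\prod_{j>0}GL(\sigma_j)\times\prod_{i>0}GL(\kappa_i)\times G(K_0+N_0)$, $G(K_0+N_0)=SO(2K_0+2N_0)$, carrying on the last block the unipotent above and one-dimensional modules on the $GL$-blocks, with the strings nested (conditions as in \eqref{eq:medium}/\eqref{eq:extraB}). The structural ingredient is the type-D analogue of Lemma \ref{lem:bij}: the multiplicities of the $K$-types in $CXD$ agree in $I_{P(\la)}$ and in $J(\la,\la)$. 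As for Types B and C this reduces to showing that the relevant $SL(2)_\al$-intertwining operators are isomorphisms on the restrictions of the $CXD$-types; the only such type with a coordinate exceeding $1$ is $V_{\mathfrak{k}}(2,0,\dots,0)$, and since $D_m$ is simply laced the numerical input is precisely $k_i>2$ (resp.\ $n_j>2$) when $1/2$ (resp.\ $1$) occurs, together with having put both $\kappa_0$ and $\sig_0$ inside the $SO$-block so that every residual sign-flip $\mat{i}{i}\mapsto\mat{-i}{i}$ uses a value $\langle\chal,w\la\rangle\ge 2$. I expect this to be the main obstacle, because in type D the ``long'' integer string $\sig_0=(-N_0+1,\dots,-1,0)$ descends to $0$ rather than to $1$: flipping $\sig_0$ or a neighbouring integer string involves the coordinate $0$, which cannot be negated inside $W(D_m)$. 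This forces one to track the parity of $\sum\al_i$ on the $CXD$-types (already visible in the $D_n$ $K$-spectrum of Theorem \ref{thm:unitarydual}), to decide carefully which $W(D_m)$-class a ``flip'' lands in, and in the steps below to enumerate the new configurations in which $\sig_0$ abuts an integer string $\sig_1$ with $n_1=2$.

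The analogue of Lemma \ref{l:bottomb} says that if $\kappa_0$ or $\sig_0$ is missing then the spherical part of $I_1$ in \eqref{eq:inclusion} has all coordinates bounded away from $0$, so by the Casimir inequality the form is indefinite on the trivial and adjoint $K$-types, which are bottom layer; hence both $\kappa_0$ and $\sig_0$ must occur. For $\la=\kappa_0\cup\sig_0$ with $K_0>N_0$ (violating $N_0\ge K_0$) I would deform $\kappa_0$ upward as in Section \ref{sec-bn>k}; at $t=1/2$ a non-spherical factor splits off carrying a pair $\mat{-K_0+1/2}{-K_0-1/2},\ \mat{-K_0-1/2}{-K_0+1/2}$ (or its integer analogue next to $0$) whose lowest $K$-type is $V_{\mathfrak{k}}(1,1,0,\dots,0)$ and on which $V_{\mathfrak{k}}(2,0,\dots,0)$ is bottom layer and indefinite, and semicontinuity transports this back to $t=0$.

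The general case then goes by induction downward on the number of strings and upward on the length of the parameter: deform some $\kappa_i$ or $\sig_j$ ($i,j>0$) upward — signatures of the $CXD$-types are frozen for $0\le t<1/2$ — and at $t=1/2$ one is in an initial case: (a) $P(\la_t)$ never changes as $t\to\infty$, so the Casimir inequality gives indefiniteness on $V_{\mathfrak{k}}(0,\dots,0)$ and $V_{\mathfrak{k}}(2,0,\dots,0)$; (b) $P(\la_{1/2})\ne P(\la_0)$ but $J(\la_{1/2},\la_{1/2})$ is still not of the unipotent form, so the induction hypothesis and semicontinuity apply; or (c) $J(\la_{1/2},\la_{1/2})$ is one of the unitary modules of the Proposition, and then, as in Type B Case (c) and Type C, the deformed induced module acquires an extra $GL(2)$-type factor with indefinite form on $(1,1)$ and $(2,0)=(1,1)+(1,-1)$, yielding indefiniteness on $CXD$ at $\la$ after pull-back. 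The only configuration surviving all of this is $\la=\kappa_0\cup\sig_0$ with $N_0\ge K_0$, which is exactly the claimed unipotent, completing the proof.
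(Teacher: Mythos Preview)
Your overall strategy is the paper's, but two pieces are off. First, the analogue of Lemma \ref{l:bottomb} only forces $\sigma_0$ to be present (this is Lemma \ref{l:bottomd} in the paper): if $0$ is not a coordinate then every $|\la_i|\ge 1/2$ and the Casimir inequality applies, but absence of $1/2$ alone gives no such bound once $0$ is still there. Indeed $K_0=0$ is permitted in the statement---it is the trivial representation---so your claim that ``both $\kappa_0$ and $\sigma_0$ must occur'' is too strong.

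Second, your handling of $\la=\kappa_0\cup\sig_0$ with $K_0>N_0$ misidentifies the extra factor and misplaces the Type-D subtlety. After deforming $\kappa_0$ up by $t=1/2$ the half-integers become integers $(1,\dots,K_0)$, so no pair of the form $\mat{-K_0+1/2}{-K_0-1/2}$ can appear; the actual extra factor is \eqref{eq:factor}, with lowest $K$-type $\mu_0=(1^{2N_0},0,\dots,0)$, and indefiniteness is detected on the \emph{fundamental} cx-relevant types $\mu_0$ and (when $K_0-N_0$ is even) $\mu_1=(1^{2N_0+2},0,\dots,0)$, not on $(2,0,\dots,0)$. The parity issue you correctly anticipate does enter, but here rather than in the multiplicity lemma: when $K_0-N_0$ is \emph{odd} the factor \eqref{eq:factor} is not Hermitian, because its spherical tail $(-K_0,\dots,-N_0-1)$ of odd length cannot be sent to its negative inside $W(D_m)$; one must pair it with its Hermitian dual, and the form is then indefinite already on the single $K$-type $\mu_0$. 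By contrast, the multiplicity lemma (the analogue of Lemma \ref{lem:bij}) is \emph{easier} than in Type B, not harder: since $D_m$ is simply laced, $\langle\check\al,(2,0,\dots,0)\rangle\le 2$ for every root, so all of $CXD$ is petite and no special care for $(2,0,\dots,0)$ is needed.
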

The proof will take up most of the next few subsections.
The unipotent representations are unitary because they
can be realized via  the dual pair correspondence in the stable range, as
$\Theta(\mathrm{triv}_{Sp})$, with the pair $Sp(2K_0,\bC) \times
SO(2K_0+2N_0,\bC)$ and one of the components of the the Oscillator representation on the
$Sp-$factor.

\medskip
As in Type B and C, we construct a parabolic subgroup $P(\la) = L(\la)U(\la)$ and an induced module
$I_{P(\la)}$ for each $\la$ dominant for the standard
  positive system, i.e.
  $$
  \la = (\dots \la_i\ge \la_{i+1},\dots \ge \la_{m-1}\ge |\la_m|\ge 0), \quad 2\la_i\in\mathbb{Z}.
  $$
	
\begin{itemize}
\item[(i)]
If $1/2$ is a coordinate of $\la,$ form the longest
\textit{string}
\begin{equation*}
  \kappa_0=(-K_0 +1/2,\dots ,-1/2)
\end{equation*}
such that all the half-integers staring from $1/2$ to $K_0-1/2$ are
coordinates of  $\la$, but $K_0+1/2$ is not. If the coordinate $0$
occurs, form the longest string
$$
\sig_0=(-N_0+1,\dots ,-1,0)
$$
where
$N_0-1$ is the largest integer coordinate that occurs in $\la,$ but
$N_0$ does not.
Add a factor  of type $G(K_0+N_0)=SO(2K_0+2N_0)$ to $L(\la)$, and the spherical irreducible representation with parameter
$$
\begin{pmatrix}
  -K_0+1/2&,\dots ,&-1/2&;&-N_0+1,&\dots,&-1&0\\
  -K_0+1/2&,\dots ,&-1/2&;&-N_0+1,&\dots,&-1&0
\end{pmatrix}.
$$
If $1/2$ is not a coordinate, let $k_1-1/2>0$ be the smallest
half-integer coordinate, and form the longest string
$\kappa_1=(k_1-1/2,\dots ,K_1-1/2)$ going up by one as before. Add a
factor  $GL(K_1-k_1+1),$ to $L(\la),$ and the 1-dimensional representation with parameter
$$
\begin{pmatrix}
  k_1-1/2&,\dots ,&K_1-1/2\\
  k_1-1/2&,\dots ,&K_1-1/2
\end{pmatrix}.
$$
 Similarly if $0$ does not occur as a coordinate, form
 $\sig_1=(n_1,\dots ,N_1)$ and add a factor $GL(N_1-n_1+1)$ to the
 Levi component $L(\la).$
\item[(ii)] Remove the coordinates in Step (i) from $\la$, and repeat
  on the remainder of half integer coordinates until there
  are no half-integer coordinates left. Similarly for the integer
  coordinates. Since the regularity assumption implies that at most one
  coordinate can be  equal to $1/2,$ and at most one coordinate equal
  to $0,$  only $GL-$factors are created.
\end{itemize}
The process produces a parabolic subgroup, and an irreducible module on its Levi component. The Levi
component is
\begin{equation} \label{eq:deformb1}
\prod_{i > 0} GL(\sigma_j) \times \prod_{j>0} GL(\kappa_i)\times G(K_0+N_0).
\end{equation}
The parameters $\la$ we are going to study satisfy:
\begin{equation}
  \label{eq:mediumD}
\begin{cases} k_i>2 &\text{if }\ 1/2 \text{ is a coordinate},\\ k_i\ge 2 &\text{otherwise}\end{cases}
\quad \text{and} \quad \begin{cases} n_j> 1 &\text{if }\ 0 \text{ is a coordinate},\\ n_j\ge 1 &\text{otherwise}\end{cases}
\end{equation}
and the nested condition
\begin{equation}
  \label{eq:extraD}
\begin{cases} k_{i+1}-K_i\ge 2,\ \text{or}\\
k_i\le k_{i+1}\le K_{i+1}\le K_i,\end{cases} \quad \text{and} \quad
\begin{cases} n_{j+1}-N_j\ge 2,\ \text{or} \\
n_{i}\le n_{i+1}\le N_{i+1}\le N_{i} \end{cases}.
\end{equation}
The main property of the cx-relevant $K-$types is the following Lemma.

\begin{lemma} \label{prop:bij} Assume that the strings of $\la$ satisfy
\eqref{eq:mediumD} and \eqref{eq:extraD}.
The multiplicities of the  cx-relevant $K-$types in $I_{P(\la)}$
coincide with those in $J(\la,\la).$
\end{lemma}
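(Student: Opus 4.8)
The plan is to follow verbatim the strategy used for the analogous statements in Type B (Lemma \ref{lem:bij}) and Type C, now with the $D\times D$ integral root system. First I would recall that, since $\la$ is dominant for the standard positive system, $J(\la,\la)$ is the image of the long intertwining operator $A_{w_0}\colon I_B(\la,\la)\to I_B(-\la,-\la)$, and that $I_{P(\la)}$ is a homomorphic image of $I_B(\la,\la)$ through which $A_{w_0}$ factors:
$$
I_B(\la,\la)\longrightarrow I_{P(\la)}\longrightarrow I_B(-\la,-\la).
$$
Since $I_{P(\la)}$ is a quotient of $I_B(\la,\la)$ and $J(\la,\la)$ --- the image of $A_{w_0}$, hence of the second arrow --- is a quotient of $I_{P(\la)}$, it suffices to prove that the second arrow is \emph{injective} on the $K-$types of $CXD$; this immediately forces the multiplicities of those $K-$types in $I_{P(\la)}$ and in $J(\la,\la)$ to agree.

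Next I would analyze the second arrow. It realizes the product of those $SL(2)_\alpha$ intertwining operators that ``flip'' the coordinates of the strings $\kappa_i$, $\sig_j$ with $i,j>0$ to their negatives, while leaving $\kappa_0$ and $\sig_0$ --- which were deliberately absorbed into the Levi factor $G(K_0+N_0)$ --- untouched. As in Type B, I would embed $I_{P(\la)}$ into a larger induced module in which this operator factors as a composition of rank-one operators, and then track, on the $K-$types of $CXD$, the values $\langle\check\alpha, w\la\rangle$ that occur. Flipping a single string $\kappa_i$ or $\sig_j$ only involves roots for which these values are integers $\ge 2$ --- this is where \eqref{eq:mediumD} enters --- and flipping one string past an overlapping string involves values bounded below by the nested condition \eqref{eq:extraD}. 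For such parameters each rank-one operator is an isomorphism on the restriction of any $K-$type whose highest weight has coordinates $\le 2$, which covers $(0,\dots,0)$, $(1,\dots,1,0,\dots,0)$ and $(2,0,\dots,0)$; composing, the second arrow is injective on $CXD$.

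The step I expect to be the main obstacle is the $K-$type $(2,0,\dots,0)$, which is not petite/single-petaled: for a root $\alpha = e_a\pm e_b$ one has $\langle\check\alpha,(2,0,\dots,0)\rangle = 4$, so the usual ``value $\le 3$'' criterion guaranteeing that a rank-one operator is an isomorphism on the $K-$type fails. This is exactly what forces the strict inequalities $k_i>2$ (when $1/2$ is a coordinate) and $n_j>1$ (when $0$ is a coordinate) in \eqref{eq:mediumD}: they ensure the parameter $\langle\check\alpha,w\la\rangle$ of the offending rank-one operator is large enough for the operator to remain an isomorphism on $(2,0,\dots,0)$. The only genuinely $D$-specific bookkeeping is that $\sig_0$ now ends in the coordinate $0$, that $\sig_1$ may begin at $n_1\ge 1$ rather than $n_1\ge 2$, and that in type $D$ all roots are of the form $e_a\pm e_b$ (there is no sign-change root); I would check that the short final steps of the flips involving the coordinate $0$ still satisfy the needed lower bounds on $\langle\check\alpha,w\la\rangle$, which is precisely the content of the second clauses of \eqref{eq:mediumD} and \eqref{eq:extraD}.
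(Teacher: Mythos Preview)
Your overall strategy is exactly that of the paper: factor the long intertwining operator through $I_{P(\la)}$ and verify that the right-hand map is injective on the $K-$types in $CXD$ by decomposing into rank-one $SL(2)_\alpha$ operators. However, you have a computational error in what you identify as the main obstacle. In Type $D$ the root system is simply-laced: every root is $\alpha=e_a\pm e_b$ with $\check\alpha=\alpha$, so
\[
\langle\check\alpha,(2,0,\dots,0)\rangle\le 2,
\]
not $4$. (The value $4$ arises in Type $B$ from the short root $e_a$ with coroot $2e_a$.) Hence $(2,0,\dots,0)$ \emph{is} petite/single-petaled in Type $D$, and the ``value $\le 3$'' criterion applies to every $K-$type in $CXD$. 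This is precisely what the paper's proof observes: it simply says that in Type $D$ all cx-relevant $K-$types are petite, so the argument from \cite{B2} applies directly, with none of the extra care needed in Type $B$. Your proposed workaround via the strict inequalities in \eqref{eq:mediumD} is therefore unnecessary for $(2,0,\dots,0)$; those conditions are still used, but only to guarantee that the parameters $\langle\check\alpha,w\la\rangle$ entering the rank-one operators stay in the range where the operators are isomorphisms on petite $K-$types.
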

\begin{proof} The proof follows the analogous result for Type B. In
    this case all cx-relevant $K-$types are petite/single petaled.  This is because
   $(\check\al,\la)\le 3$ for all roots.
\end{proof}

As in Type B, we have a necessary condition on the spherical parameter:
\begin{lemma}\label{l:bottomd}
If $J(\lambda,\lambda)$ is unitary, then the string $\sigma_0 = (-N_0+1, \dots, 1,0)$ must appear in $(\lambda,\lambda)$.
\end{lemma}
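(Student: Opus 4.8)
The plan is to mirror the proof of Lemma~\ref{l:bottomb} (its Type~B counterpart), with the roles of $\kappa_0$ and $1/2$ now played by $\sigma_0$ and $0$. First I would argue by contradiction: assume $\sigma_0$ does not appear in $(\lambda,\lambda)$, i.e. $0$ is not a coordinate of $\lambda$. Taking $\lambda$ dominant for the standard positive system and using that it is regular and half-integral, this forces every coordinate of $\lambda$ to be $\geq \frac{1}{2}$. In particular every coordinate of the spherical part of $I_1$ in \eqref{eq:inclusion} is $\geq \frac{1}{2}$, hence strictly larger than the smallest coordinate $0$ of $\rho = (m-1,\dots,1,0)$.

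Next I would apply the Casimir inequality (\cite[Lemma~12.6]{V1}) to $J(\lambda,\lambda)$, exactly as in Type~B. With the Hermitian form normalized to be positive on the trivial (= lowest) $K$-type $V_{\mathfrak{k}}(0,\dots,0)$, the fact that the spherical parameter strictly dominates $\rho$ in its smallest coordinate forces the signature on the adjoint $K$-type $V_{\mathfrak{k}}(1,1,0,\dots,0)$ to be opposite; thus the form is indefinite on this pair of $K$-types, both of which lie in $CXD$. Since these are bottom layer $K$-types (they are petite/single-petaled, cf. Lemma~\ref{prop:bij}), the indefiniteness is inherited by $J(\lambda,\lambda)$, so it is not unitary — the desired contradiction.

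The only step that genuinely needs care is the Casimir inequality itself: one must verify that for a spherical Hermitian module of $SO(2m,\mathbb{C})$ whose infinitesimal character has all coordinates $\geq \frac{1}{2}$, the invariant form has different signatures on $V_{\mathfrak{k}}(0,\dots,0)$ and on $V_{\mathfrak{k}}(1,1,0,\dots,0)$. This is done by the same comparison of the eigenvalues of the long intertwining operator restricted to these two $K$-types as in the Type~B argument; the crucial point is that $\rho$ has smallest coordinate $0$ in Type~D (rather than $\frac{1}{2}$ as in Type~B), which is exactly why the threshold here is ``$0$ is a coordinate of $\lambda$'' rather than ``$1/2$ is a coordinate of $\lambda$.'' No new ideas beyond those already used for Lemma~\ref{l:bottomb} should be required, so the proof should be a short transcription.
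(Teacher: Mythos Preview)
Your proposal is correct and follows essentially the same approach as the paper: assume $0$ is not a coordinate so that all coordinates of the spherical parameter are $\geq 1/2$, then invoke the Casimir inequality to conclude the form is indefinite on the trivial $K$-type $V_{\mathfrak{k}}(0,\dots,0)$ and the adjoint $K$-type $V_{\mathfrak{k}}(1,1,0,\dots,0)$. The paper's proof is the two-line version of exactly this, referring back to Lemma~\ref{l:bottomb}. Your remarks about bottom layer and Lemma~\ref{prop:bij} are unnecessary here since the argument is applied directly to the spherical module $J(\lambda,\lambda)$ itself, but they do no harm.
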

\begin{proof}
  The coordinates on the spherical part of $I_1$ in Equation \eqref{eq:inclusion}
	are all $\ge 1/2.$ As in Lemma \ref{l:bottomb}, the irreducible representation $J(\lambda,\lambda)$ has indefinite form on the adjoint $K-$type $V_{\mathfrak{k}}(1,1,0,\dots ,0)$ and the trivial $K-$type.
\end{proof}

\subsection{Proof of Proposition \ref{thm:unitd} -- $\mathbf{\la=\kappa_0\cup\sig_0}$} \label{sec-bn>k1}
The case when $N_0\ge K_0$ is unitary. So assume
\begin{equation}
  \label{zerostrings1}
 \la = \kappa_0\cup \sigma_0\quad \text{satisfying }\quad  K_0>N_0.
\end{equation}

By Lemma \ref{l:bottomd}, we assume $N_0>0$, and let
$$
Ind(\la_t):=\Ind^G_{GL(\kappa_0)\times G(N_0)}\left((1/2+t,\dots, K_0-1/2+t) \otimes (-N_0+1, \dots,-1,0) \right).
$$
The multiplicities of all cx-relevant $K-$types in $Ind(\la_t)$ and
$J(\la,\la)$ still coincide for small $t$. This is as before: $Ind(\la)$ is
a homomorphic image of $\Ind_B^G(\la,\la)$, and
the intertwining operator changing $(1/2,\dots ,K_0-1/2)$ to
  $(-K_0+1/2,\dots ,-1/2)$ involves only $(\check\al,w\la)$ which are half-integers or  $\ge 2$:
  $$
    \mat{1/2,0}{1/2,0}\mapsto \mat{0,-1/2}{1/2,0}\quad \text{ or } \quad
    \mat{1/2,3/2}{1/2,3/2}\mapsto \mat{-3/2,-1/2}{1/2,3/2}
  $$
  depending whether $K_0$ is even or odd. In the first case, the
  $SL(2)-$intertwining operator is an isomorphism, in the other case
  the kernel of the intertwining operator has lowest $K-$type $(2,2).$
  So the intertwining operator is an isomorphism on the cx-relevant $K-$types.

\smallskip	
The signatures (and multiplicities) of the fundamental cx-relevant $K-$types of $Ind(\la_t)$ do not change for
$0\le t< 1/2.$ At $t=1/2,$ the parameter is
\begin{equation*}
\la_{1/2}= (1, \dots, K_0 ; -N_0+1, \dots, -1,0) = (-K_0,\dots, -1,0)\cup (1,\dots ,N_0-1).
\end{equation*}
As in the case in Type B, $J(\la_{1/2},\la_{1/2})$ and $Ind(\la_{1/2})$ are different on the level of
fundamental $K-$types, and $Ind(\la_{1/2})$ has another factor with parameter
\begin{equation}
  \label{eq:factor}
  \begin{pmatrix}
  -N_0+1&\dots &N_0&;&-K_0&\dots &-N_0-1\\
  -N_0&\dots &N_0+1&;&-K_0&\dots &-N_0-1
\end{pmatrix}
\end{equation}
and lowest $K-$type $\mu_0=(\underset{2N_0}{\underbrace{1,\dots ,1}},0,\dots 0)$.

If $K_{0}-N_{0}$ is odd, the factor is not Hermitian, and there is another
factor which is Hermitian dual to it, whose parameter
$-K_{0},\dots,-N_{0}-1$ is changed to its negative in both
$\lambda_{L}$ and $\lambda_{R}$. In this case, the signature is indefinite on a single $K$-type $\mu_0$. When $K_{0}-N_{0}>0$ is even, the signature
is indefinite on $\mu_{0}$ and
$\mu_{1}=(\underset{2N_{0}+2}{\underbrace{1,\dots,1}},0,\dots0)$.

In both cases, $Ind(\la_{1/2})$, and hence $Ind(\la)$ and $J(\la,\la)$,
has indefinite signature on the fundamental cx-relevant $K-$types.

\subsection{Proof of Proposition \ref{thm:unitd} -- Other Strings}  \label{sec:dother}
We follow the reasoning for type B. We do a
downward induction on the length of $\la$, and the number of
strings. The case when there are no strings other than
$\kappa_0,\sig_0,$ was dealt with in the previous section. As in Type B, there are three {\it initial cases}:
\begin{itemize}
\item[(a)] There is a string $\kappa_i$ or $\sig_j$ with $i,j>0$ such that
  $P(\la_t)$ does not change as $t\to\infty$,
\item[(b)] The strings are
  $$
  (-K_0 +1/2, \dots, -1/2; -N_0+1, \dots, 1,0),\quad \text{with}\ K_0 > N_0$$
  as in the previous section.
\item[(c)] The strings are $\la = \kappa_0 \cup \sig_0 \cup \xi$, where
$$
\xi = (K_0,\dots , K_1-1)\quad \text{ or }\quad (N_0-1/2,\dots , N_1-3/2),
$$
so that the deformation of $\xi$ to $t=1/2$ yields a unitary spherical
module. This means that $N_0 \geq K_1$ in one case, $N_1 \geq K_0$ in  the
other case.
\end{itemize}
As in Type B, Case (a) and (b) yield indefinite signatures on the trivial and adjoint $K-$type $V_{\mathfrak{k}}(1,1,0,\dots ,0)$.
And Case (c) yields indefinite form on $V_{\mathfrak{k}}(1,1,0,\dots ,0)$ and  $V_{\mathfrak{k}}(2,0,\dots,0)$.

\subsection{Non-spherical case}\label{sec:nsphd} If $\mu_1 > 0$, the parameter contains
$$
\begin{pmatrix}
  1&0\\0&-1
\end{pmatrix}
\quad \text{ or }\quad
\begin{pmatrix}
  1/2\\-1/2
\end{pmatrix}.
$$
Suppose $\disp{\begin{pmatrix}
    1&0\\0&-1\end{pmatrix}}$ occurs. If the parameter has no spherical
part, there is nothing to be done; the parameter is unitary. If the
parameter has a spherical part, there cannot be a $\sig_0$ or else the regularity of the parameter is violated.  Lemma \ref{l:bottomd} implies that the Hermitian form is indefinite on
the trivial and adjoint $K-$types. Both are bottom layer if the lowest $K-$type has coordinates greater than one.

The proof of the claim is reduced to the case when
the non-spherical parameter is exactly
$
\begin{pmatrix}
  1/2\\-1/2
\end{pmatrix}
$, and the spherical parameter contains a $\sigma_0$.

The only case when the bottom layer $K-$type does
not detect non-unitarity is in Case (c) in Section \ref{sec:dother}, which occurs when
there is no $\kappa_0$ (due to regularity of $\la$), and a string $\kappa_i$ ($i > 0$) in the spherical parameter such that it is deformed to $\xi = (N_0 - 1/2, \dots, N_1 - 3/2)$.
The case when the spherical part is exactly $\sigma_0 \cup \kappa_1$ with $\kappa_1 = (3/2,\dots ,K_1-1/2)$  and $N_ 0 \geq K_1$ is unitary. Otherwise,
we have $\kappa_1 = (3/2,\dots ,K_1-1/2)$  and $N_ 0 < K_1$ which is not unitary on the level of bottom layer $K-$types by Case (b) above,
or there is a string $\kappa_i$ in the spherical parameter satisfying $k_i-1/2\ge 5/2$.
The fact that $k_i-1/2\ge 5/2$ implies that the $K-$types
$$V_{\mathfrak{k}}(2,1,0,\dots ,0)\quad \text{and}\quad V_{\mathfrak{k}}(1,1,1,0,\dots ,0)$$
occur with the same multiplicity in $J(\la,-s\la)$ and in the unitarily
induced module from the spherical part. Since their restrictions to the Levi component contain
$K-$types with indefinite form, the conclusion follows.

\subsection{Spin Groups} \label{sec:spind}  As in Section \ref{sec:spinb}, we study genuine representations
of $G=Spin(2n,\bC)$ in this section. The $K-$types have highest weights with
coordinates in $\mathbb{N} \cup + \frac12$ only, except the last coordinate can be $-\frac12$. As already mentioned, $\rho=(m-1,\dots
  ,1,0)$, so $2\la=\{\eta-\rho\}+\rho$ must have
  coordinates of the form $\mathbb{N} + \frac12$ only (the last coordinate can be $-\frac12$). The integral system for $\la$ is type A.

We consider the case wen the lowest $K-$type of $J(\la_L,\la_R)$ is
$Spin^{\pm} = V_{\mathfrak{k}}(\frac12,\dots \frac12,\pm\frac12)$. Using the same arguments as
in Section \ref{sec:spinb}, all such irreducible modules must be of the form
\begin{equation}
  \label{eq:spinlkt1}
J(\la_L,\la_R)=Ind_{GL\times GL}^G(F_a\otimes F_b).
\end{equation}
Unless $F_a,F_b$ are one dimensional, the form is
indefinite on the lowest $K-$type $V_{\mathfrak{k}}(\frac12,\dots,\frac12)$ and `adjoint' $K-$type $V_{\mathfrak{k}}(\frac32,\frac12,\dots
,\frac12,\mp \frac12).$ In the case when there is only $F_a$ in \eqref{eq:spinlkt1}, and the $GL$ corresponds to either one of the two subroot system of $D_n$,
one obtains the genuine unipotent representations with infinitesimal character given in \eqref{eq:D.2}.

 \section{A Positivity Result}
In this section, we sharpen the results in Section 5.4-5.6 in
\cite{BP}. We investigate the PRV-components of $\pi_u \otimes V_{\mathfrak{k}}(\rho)$
when $\pi_u$ is a unipotent representation with half-integral
regular infinitesimal character for a classical group.

By
\cite[Section 5.4-5.6]{BP}, all $\pi_u \in \widehat{G}^d$ for Type $B_n$, while
for Type $C_n$ and $D_n$
$\pi_u^{even/odd} \in \widehat{G}^d$ if and only if $n$ is
even/odd. Moreover, the spin-lowest $K-$type is
unique for all such $\pi_u$'s
(this will be verified in Proposition \ref{prop-positive-result} below).

\medskip
Since the $K-$types of $\pi_u$ are multiplicity free, Theorem \ref{thm:main}
holds for all $\pi_u \in \widehat{G}^d$. In order to prove
Theorems \ref{thm:main1} and \ref{thm:main} for general $\pi \in \widehat{G}^d$,
we need the following refinement of the results in \cite{BP}:
\begin{proposition}\label{prop-positive-result}
Let $G$ be a connected complex classical simple Lie group and $\pi_u = J(\la,-s\la)$ be a 
unipotent representation given in Theorem \ref{thm:3}. If $\pi_u \in \widehat{G}^d$,
then there is a unique $K-$type $V_{\mathfrak{k}}(\eta)$ in $\pi_u$
such that $\delta := \{\eta - \rho\} = 2\la - \rho$ realizes the
minimum of $\{\eta'-\rho\}$ over the $K-$spectrum of $\pi_u$.
Furthermore,
\begin{equation}
\pi_u \otimes V_{\mathfrak{k}}(\rho) = V_{\mathfrak{k}}(\delta) \oplus \bigoplus_{\delta'\neq \delta} m_{\delta'}V_{\mathfrak{k}}(\delta'),
\end{equation}
where $m_{\delta'}$ are positive integers  and
\begin{equation}\label{delta-deltaprime}
\delta' = \delta + \sum_{i=1}^{l} m_i \alpha_i, \mbox{ satisfying } m_i\in\bZ_{\geq 0}.
\end{equation}
If $\pi_u \notin \widehat{G}^d$, then all $\widetilde{K}-$types of $\pi_u \otimes V_{\mathfrak{k}}(\rho)$ have extremal weights of the form \eqref{delta-deltaprime}
for $\delta$ with norm strictly greater than $\|2\la - \rho\|$.
\end{proposition}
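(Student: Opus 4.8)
The plan is to treat each classical type ($B_n$, $C_n$, $D_n$) separately, since in each case the $K$-spectrum of the unipotent $\pi_u$ is explicitly given in Theorem \ref{thm:unitarydual} and is multiplicity free. First I would recall the relevant $K$-spectrum and observe that, because the multiplicities are all one, computing $\pi_u \otimes V_{\mathfrak{k}}(\rho)$ reduces to understanding, for each $K$-type $V_{\mathfrak{k}}(\eta)$ appearing in $\pi_u$, the PRV-component of $V_{\mathfrak{k}}(\eta)\otimes V_{\mathfrak{k}}(\rho)$, namely $V_{\mathfrak{k}}(\{\eta+w_0\rho\}) = V_{\mathfrak{k}}(\{\eta-\rho\})$. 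So the first substantive step is to compute $\{\eta - \rho\}$ for each $\eta$ in the $K$-spectrum and to identify which one minimizes the norm $\|\{\eta-\rho\}+\rho\|$ — equivalently, which one is the spin-lowest $K$-type. By Proposition \ref{prop-D-spin-lowest} and the discussion in Section \ref{sec-spin}, if $\pi_u \in \widehat G^d$ this minimum equals $\|2\la\|$ and is realized precisely by the $\eta$ with $\{\eta-\rho\} = 2\la - \rho =: \delta$; the content here is to verify \emph{uniqueness} of that $\eta$, which is a direct check using the explicit $K$-parameters (e.g. in Type $B$, $\eta = (\alpha_1,\alpha_1,\dots,\alpha_a,\alpha_a,0,\dots,0)$, subtract $\rho = (m-1/2,\dots,1/2)$, reorder, and find the unique choice of the $\alpha_i$ minimizing the norm after dominant reordering).

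Next I would establish the decomposition formula. For the $\eta$ realizing the spin-lowest $K$-type, the PRV-component contributes exactly $V_{\mathfrak{k}}(\delta)$ with multiplicity one. For every other $K$-type $V_{\mathfrak{k}}(\eta')$ in $\pi_u$, Theorem \ref{thm-PRV} gives that every constituent of $V_{\mathfrak{k}}(\eta')\otimes V_{\mathfrak{k}}(\rho)$ has highest weight of the form $\{\eta'-\rho\} + \sum n_i\alpha_i$ with $n_i \in \bN$, and the inequality $\|\{\eta'-\rho\}+\rho\| \leq \|\text{constituent}+\rho\|$ holds. The key claim is then that for each such $\eta'$, $\{\eta'-\rho\}$ itself lies in $\delta + \sum \bZ_{\geq 0}\alpha_i$ (so that all constituents do too), and that $\delta$ appears \emph{only} from the spin-lowest $\eta$. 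The first half is a combinatorial comparison of the weights $\{\eta'-\rho\}$ with $\delta$, carried out type-by-type; the second half follows because if some other $\eta'$ contributed a copy of $V_{\mathfrak{k}}(\delta)$, then $\delta$ would be a constituent of $V_{\mathfrak{k}}(\eta')\otimes V_{\mathfrak{k}}(\rho)$ with $\|\delta + \rho\| \geq \|\{\eta'-\rho\}+\rho\| \geq \|\delta+\rho\|$, forcing $\{\eta'-\rho\} = \delta$ and hence $\eta' = \eta$ by the uniqueness established above. Positivity of the $m_{\delta'}$ is automatic since each is a sum of nonnegative multiplicities, at least one of which is positive (the PRV-component of the corresponding tensor product).

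For the case $\pi_u \notin \widehat G^d$ (which by \cite[Section 5.4--5.6]{BP} happens for Type $C_n$, $D_n$ with $n$ of the wrong parity), the argument is the same but easier: Proposition \ref{prop-D-spin-lowest} gives that the spin norm of $\pi_u$ is strictly greater than $\|2\la\|$, so \eqref{eq-HPcomplex2} fails for \emph{every} $K$-type, i.e. no $\eta$ in the $K$-spectrum satisfies $\{\eta - \rho\} = 2\la - \rho$. Hence every PRV-component $V_{\mathfrak{k}}(\{\eta'-\rho\})$, and a fortiori every constituent of $\pi_u \otimes V_{\mathfrak{k}}(\rho)$, has highest weight of the form $\delta + \sum m_i\alpha_i$ with $\|\delta + \rho\| > \|2\la - \rho + \rho\| = \|2\la\|$, which is the asserted statement once one records (again by the explicit spectra) that all the $\{\eta'-\rho\}$ are comparable in the root-cone ordering to a common minimal such weight.

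The main obstacle I anticipate is the type-by-type combinatorial bookkeeping in the second step: verifying that for \emph{every} $\eta'$ in the (infinite) $K$-spectrum of $\pi_u$ the dominant reordering $\{\eta'-\rho\}$ lands in $\delta + \sum\bZ_{\geq 0}\alpha_i$, and pinning down the multiplicities $m_{\delta'}$ precisely enough to see they are positive. This is where the explicit structure of the unipotent $K$-spectra (the strings of equal coordinates, the parity constraints in Types $C$ and $D$) has to be exploited carefully; the representation-theoretic skeleton — PRV plus the norm inequality plus uniqueness of the spin-LKT — is otherwise routine.
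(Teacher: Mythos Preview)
Your proposal is correct and follows essentially the same approach as the paper: reduce via Theorem~\ref{thm-PRV} to showing that for each $K$-type $\eta'$ in the explicit (multiplicity-free) $K$-spectrum of $\pi_u$ the PRV weight $\{\eta'-\rho\}$ lies in $\delta + \sum_i \bZ_{\ge 0}\alpha_i$, with equality only for the spin-lowest $\eta$, and then carry this out type by type. The paper's proof does exactly this computation in coordinates (writing out $\{\eta'-\rho\}$ explicitly for each of $B_n$, $C_n$, $D_n$ and reading off the difference as a sum of positive roots), so the combinatorial bookkeeping you anticipate is indeed the entire substance of the argument.
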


\begin{proof}
The statement is obvious when $\pi_u = triv$ is the trivial representation. So we assume $\pi_u$ is not trivial from now on.
Let $\eta'$ be any $K-$type of $\pi_u$ other than a spin-lowest $K-$type $\eta$. Put $\delta':=\{\eta'-\rho\}$. In view of Theorem \ref{thm-PRV}, it suffices to prove that
\eqref{delta-deltaprime} holds for $\delta$ and $\delta'$.

\textbf{Type $\mathbf{B_n}$:} Let
$V_{\mathfrak{k}}(\eta') = V_{\mathfrak{k}}(\alpha_1, \alpha_1, \dots, \alpha_a, \alpha_a, \underbrace{0, \dots, 0}_{b-a})$
be a $K-$type in $\pi_u$. Since
$\rho=(n-1/2,n-3/2,\dots ,1/2),$ the PRV-component  $\delta'$ is, up  to the action of $W(B_n)$,
\begin{equation}\label{deltaprime-Bn}
x\delta'=(n-2a-1/2, n-2a-3/2, \dots, 1/2, B_1,\dots, B_{2a})
\end{equation}
The minimum is attained when all $B_i=1/2,$ and this can only be
achieved from
$$
\eta=(n-1,n-1,n-3,\dots , n-2a-1,n-2a-1,0,\dots ,0).
$$
It follows that
\begin{equation}
  \label{delta-Bn}
 \delta=(n-2a-1,\dots, 1/2,1/2,\dots ,1/2).
\end{equation}
Any other $K-$type must give rise to a $\delta'$ with at least $B_1\ge 3/2,$ and $B_i\ge 1/2.$ The difference
$x\delta'-\delta$, from \eqref{deltaprime-Bn} and  \eqref{delta-Bn}, is a sum
of short positive roots; on each nonzero coordinate it is $B_i-1/2$
times the corresponding short root. The difference $x\delta'-\delta'$,
as in \eqref{deltaprime-Bn}, is clearly a sum of positive roots since
the two are conjugate, and $\delta'$ is dominant.

\textbf{Type $\mathbf{C_n}$:} Here
$V_{\mathfrak{k}}(\eta') = V_{\mathfrak{k}}(2k,0,\dots,0)$ or $V_{\mathfrak{k}}(2k+1,0,\dots,0)$ and
$\rho = (n,n-1, \dots, 1)$.
The PRV-component is, up to $W(C_n)$,
\begin{equation}\label{deltaprime-Cn}
\delta' = (n-1, n-2, \dots,1, |n-2k|)\quad \text{ or } \quad (n-1,\dots , |n-2k-1|).
\end{equation}
The minimum is attained at $k=\frac{n}{2}$ if $n$ is even,
$k=\frac{n\pm 1}{2}$ if $n$ is odd. Thus
\begin{equation}\label{delta-Cn}
\delta=(n-1, n-2, \dots, 1, 0)\quad \text{ or }\quad (n-1, n-2, \dots, 1, {\bf 1}).
\end{equation}
The argument for Type B applies to derive the conclusion in the
statement of the Proposition.

Also, since $\delta + \rho$ is equal to $2\la = (2n-1,\dots,3,1)$ if and only if
$\delta=(n-1, n-2, \dots, 1, 0)$, it also follows that
$H_D(\pi^{even})\ne 0$ and $H_D(\pi^{odd})=0$ if $n$ is
even, and the reverse is true if $n$ is odd.

\textbf{Type $\mathbf{D_n}$:} We only consider $b > a > 0$ and omit the easier case when $b=a$.
Here
$$
V_{\mathfrak{k}}(\eta')=V_{\mathfrak{k}}(\alpha_1,  \dots,  \alpha_{2a}, \underbrace{0, \dots, 0}_{b-a}),
$$
where $\sum_i \al_i$ is even/odd if $\pi_u^{even/odd}$ is being considered,
and $\rho = (n-1,\dots,1,0)$.
Then the PRV-component, up to the action of
$W(D_n)$, is
\begin{equation}\label{deltaprime-Dn}
\delta' = (n-2a-1,  \dots, 1, 0, |n-1-\al_1|,\dots ,|n-2a-\al_{2a}|)
\end{equation}
Even though  $W(D_n)$ only
allows an even number of sign changes, in the case $b>a$ there is a
coordinate equal to $0,$ so we can change all coordinates to $\ge 0.$ As in type C,
$$
\delta = (n-2a-1,\dots, 1,0,\dots ,0)\quad \text{ or } (n-2a-1,\dots
,1,{\bf 1},0,\dots ,0),
$$
and $H_D(\pi_u^{even}) \neq 0$ if and only if $\delta$ take the first value.
We omit further details which are as in Types B and C.
\end{proof}


The above proposition demonstrates a strong positivity result on the $\widetilde{K}$-types
appearing in the tensor product decomposition of $\pi_u \otimes V_{\mathfrak{k}}(\rho)$ for
unipotent representations $\pi_u$.
In fact, similar calculations have been carried out for other irreducible
unitary representations, and so far there are no counter-examples to the following conjecture,
which sharpens Conjecture \ref{conj:multone} in view of Proposition \ref{prop-D-spin-lowest}:
\begin{conjecture}\label{conj-positive-result}
Proposition \ref{prop-positive-result} holds for any $\pi\in \widehat{G}^d$.
\end{conjecture}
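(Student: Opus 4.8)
The plan is to deduce Conjecture \ref{conj-positive-result} from the already–established unipotent case (Proposition \ref{prop-positive-result}) by combining the classification of $\widehat{G}^d$ with Kostant's PRV component (Theorem \ref{thm-PRV}). By Theorems \ref{thm:main1} and \ref{thm:main}, any $\pi\in\widehat{G}^d$ is unitarily induced, $\pi=\Ind_{MN}^G\big((\bC_\xi\otimes\pi_u)\otimes{\bf 1}\big)$, where $\bC_\xi$ is a unitary character of the Levi $M$ and $\pi_u$ is trivial or one of the unipotent representations on the classical factor of $M$; moreover $\pi$ has a unique spin-lowest $K$-type $V_{\mathfrak k}(\eta)$, which occurs with multiplicity one and satisfies $\{\eta-\rho\}=2\la-\rho=:\delta$. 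Expanding $\pi\otimes V_{\mathfrak k}(\rho)=\bigoplus_\kappa [V_{\mathfrak k}(\kappa):\pi]\,V_{\mathfrak k}(\kappa)\otimes V_{\mathfrak k}(\rho)$ and applying Theorem \ref{thm-PRV} to each factor (here $w_0\rho=-\rho$, so the PRV component of $V_{\mathfrak k}(\kappa)\otimes V_{\mathfrak k}(\rho)$ is $V_{\mathfrak k}(\{\kappa-\rho\})$ and every other component equals $\{\kappa-\rho\}+\sum_i n_i\alpha_i$ with $n_i\in\bN$), one sees that the whole proposition follows from the single root-order estimate, which we call $(\star)$: for every $K$-type $V_{\mathfrak k}(\kappa)$ occurring in $\pi$, one has $\{\kappa-\rho\}-\delta\in\sum_i\bZ_{\ge 0}\alpha_i$. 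Indeed, granting $(\star)$, a copy of $V_{\mathfrak k}(\delta)$ inside some $V_{\mathfrak k}(\kappa)\otimes V_{\mathfrak k}(\rho)$ forces $\{\kappa-\rho\}\le\delta$, hence $\{\kappa-\rho\}=\delta$, hence $\|\{\kappa-\rho\}+\rho\|=\|2\la\|=\|\pi\|_{\mathrm{spin}}$, so $\kappa$ is a spin-lowest $K$-type and $\kappa=\eta$ by Theorem \ref{thm:main}; therefore $[V_{\mathfrak k}(\delta):\pi\otimes V_{\mathfrak k}(\rho)]=[V_{\mathfrak k}(\eta):\pi]=1$, all other components lie in $\delta+\sum_i\bZ_{\ge 0}\alpha_i$, and this recovers \eqref{delta-deltaprime} and the multiplicity-one statement (i.e. Conjecture \ref{conj:sharp}); when $\pi\notin\widehat{G}^d$ the same bookkeeping shows the smallest $\{\kappa-\rho\}$ has norm $\|\pi\|_{\mathrm{spin}}>\|2\la\|$, giving the last sentence.

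To prove $(\star)$ for the classical groups I would argue exactly as in the proof of Proposition \ref{prop-positive-result}, now carried through the induction. By Frobenius reciprocity for the compact pair $K_M=K\cap M\subset K$ one has $\pi|_K\cong\Ind_{K_M}^K\big((\bC_\xi\otimes\pi_u)|_{K_M}\big)$, so $V_{\mathfrak k}(\kappa)$ occurs in $\pi$ exactly when its restriction to $K_M$ contains a $K_M$-type $\kappa_M$ occurring in $\bC_\xi\otimes\pi_u$; in coordinates, $\kappa_M$ has a classical block running over the multiplicity-free unipotent $K$-spectra analysed in \eqref{deltaprime-Bn}--\eqref{deltaprime-Dn}, and, for each $GL$-factor of $M$, a block which is a one-dimensional character shifted by the nested strings of Sections \ref{sec:bottom}--\ref{sec:4.1}. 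A coordinate computation identical in spirit to the displays in the proof of Proposition \ref{prop-positive-result} — with the $GL$-strings now playing the role of the free coordinates $B_i$, $|n-2k|$, $|n-2a-\alpha_i|$ there — reduces $(\star)$ to the corresponding estimate already established for $\pi_u$ inside $M$, together with the elementary fact that the contributions of the $GL$-strings only move $\kappa-\rho$ upward in the positive root cone, which is exactly what the nestedness/regularity conditions \eqref{eq:extraB}, \eqref{eq:extraC}, \eqref{eq:extraD} encode. The $Spin(2n+1,\bC)$ and $Spin(2n,\bC)$ cases reduce to the linear case by restriction, as in Sections \ref{sec:spinb} and \ref{sec:spind}.

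The hard part is $(\star)$ itself: it is a root-order inequality, strictly stronger than the norm inequality \eqref{Dirac-inequality}, and root-order control of $\{\kappa-\rho\}$ is delicate because the map $\kappa\mapsto\{\kappa-\rho\}$ involves a Weyl-chamber ``folding'' that can move weights transversally to the root cone. In Proposition \ref{prop-positive-result} this was tamed by the explicit, multiplicity-free $K$-spectra of the unipotent blocks; that explicitness survives the $GL$-induction for classical $G$, so the argument should go through, but bookkeeping the interaction of the nested $GL$-strings with the unipotent block type by type is intricate, and I expect the detailed case analysis in Types B, C, D (and the two $Spin$ families) to be the bulk of the work. For general reductive $G$ — in particular the exceptional groups — no such uniform combinatorial model is available, which is why only the conjecture is asserted there; a structural proof, e.g. one reading the positivity directly off the decomposition $\pi\otimes S_G=H_D(\pi)\oplus\mathrm{Im}\,D$ and the sign properties of $D$ on a unitary module (cf. Proposition \ref{prop-D-spin-lowest}), bypassing the $K$-spectrum entirely, would be the natural route to the general statement, but I do not presently see how to make it work.
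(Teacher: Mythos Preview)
This statement is a \emph{conjecture} in the paper, not a theorem; the paper offers no proof and explicitly frames it as open (``similar calculations have been carried out for other irreducible unitary representations, and so far there are no counter-examples''). What the paper \emph{does} establish, via \eqref{eq-mult} and Proposition \ref{prop-branching-Dirac}, is the multiplicity-one assertion $[\pi\otimes V_{\mathfrak k}(\rho):V_{\mathfrak k}(2\la-\rho)]=1$ for $\pi\in\widehat{G}^d$. The genuinely open content of Conjecture \ref{conj-positive-result} is the root-order positivity \eqref{delta-deltaprime} for \emph{every} constituent $V_{\mathfrak k}(\delta')$ of $\pi\otimes V_{\mathfrak k}(\rho)$, and the paper does not claim to have this.

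Your reduction to the statement $(\star)$ is clean and correct, and you are right that $(\star)$, together with Theorem \ref{thm-PRV} and Theorem \ref{thm:main}, yields the full conjecture for classical $G$. But your sketch of $(\star)$ has two real gaps. First, conditions \eqref{eq:extraB}--\eqref{eq:extraD} are constraints on the \emph{deformation parameter} $\la$ used in Sections 4--6 to detect non-unitarity on cx-relevant $K$-types; they say nothing about the full $K$-spectrum of an induced module and do not encode any ``upward in the root cone'' statement for $\kappa-\rho$. Second, the branching step is not innocent: knowing the $K_M$-type $\kappa_M$ inside $V_{\mathfrak k}(\kappa)|_{K_M}$ gives no direct control over $\{\kappa-\rho\}$ in terms of $\{\kappa_M-\rho_{\mathfrak m}\}$, because the folding $\kappa\mapsto\{\kappa-\rho\}$ uses $W(G)$ while the unipotent estimate from Proposition \ref{prop-positive-result} lives in $W(M)$, and the passage between the two is exactly where root-order (as opposed to norm) control can be lost. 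You acknowledge this yourself; the honest summary is that the conjecture remains open and your proposal, while a sensible outline, does not close it.
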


\section{Proof of Theorems \ref{thm:main1} and \ref{thm:main}}
We prove Theorems \ref{thm:main1} and \ref{thm:main} by sharpening the results in Section 2.2 of \cite{BP}. To conform to the
notation in that section, write $\pi_\fk m=J(\la_\fk m,-s\la_\fk m)$
for a unitary representation such that the center of $M$
acts trivially.  In particular, when $\pi_\fk m$ is
  1-dimensional, it is the trivial representation. This case occurs in
  all classical types, and is the only case for type A and Spin
  groups. We assume that $\la_\fk m$ is regular integral dominant
for a positive system $\Delta_M,$ and $\la$ is regular
half-integral. The relations
\begin{equation}
  \label{eq:uparameters}
\begin{aligned}
&\la_\fk m +s\la_\fk m=\mu_\fk m,\quad &&2\la_\fk m=\mu_\fk m+\nu_\fk m,
\\
&\la_\fk m-s\la_\fk m=\nu_\fk m,\quad &&2s\la_\fk m=\mu_\fk m-\nu_\fk m,\\
  &\la=\xi/2+\la_\fk m,\quad &&\mu=\xi +\mu_\fk m,\\
  &s\la=\xi/2+s\la_\fk m,\quad &&\nu=\nu_\fk m.
\end{aligned}
\end{equation}
hold, with $s\in W_M\subset W$. The unitary character $\xi$ can be assumed
dominant for a choice of $\Delta(\fk n)$. We denote $\Delta=\Delta_M\cup\Delta(\fk n).$ However $\lambda$ may
not be dominant for $\Delta$, so let $\Delta'$ be the positive
system for which $\lambda$ is dominant. Since $\lambda$ is dominant for $\Delta_M$,
$$
\Delta_M \subset \Delta'\cap \Delta.
$$
For $\pi_\fk m$, we assume in addition that
\begin{enumerate}
\item[(i)] $\pi_\fk m$ is unitary,
\item[(ii)] $\la$ is regular half-integral,
\item[(iii)]  $\pi_\fk m\otimes V_{\mathfrak{k} \cap \mathfrak{m}}(\rho_\fk m)$ contains only  $\widetilde{K\cap M}-$types of the form
  $$
  \delta_M'=\delta_M+\sum_{\gamma\in \Delta_M}
  m_\gamma\gamma,\quad m_\gamma\in\bN,\quad \text{ with }\quad \delta_M=2\la_\fk m-\rho_\fk m
  $$
\end{enumerate}
By Proposition \ref{prop-positive-result}, this covers all $\pi_u$ in Theorem \ref{thm:unitarydual}
with $H_D(\pi_u) \neq 0$ for classical types, and the case of $\pi_{u} = triv$ for Spin groups. 

By Proposition \ref{prop-D-spin-lowest}, the only
$\widetilde{K}$-type that can appear in the Dirac cohomology of $\pi$ must have extremal weight
$\tau' := 2\lambda - \rho'$, where $2\rho'$ is the sum of all positive roots in $\Delta'$.
By abuse of notations, we write $V_{\mathfrak{k}}(\tau')$ as the $\widetilde{K}$-type with extremal weight
$\tau'$. The relation
\begin{equation}
  \label{eq:tautotauM}
  \begin{aligned}
\tau'=&2\la-\rho'=\xi+\mu_\fk m+\nu_\fk m -\rho'=\xi +2\la_\fk m
-\rho'= \xi+ \delta_M +\rho_\fk m-\rho'=\\
&\xi +\delta_M -w_\fk m\rho+\rho_\fk n-\rho'=\delta_M+(\xi+\rho_\fk n) -(w_\fk m \rho+\rho'),
  \end{aligned}
\end{equation}
because $w_\fk m\rho=-\rho_\fk m +\rho_\fk n.$ Furthermore,
\begin{equation}
  \label{eq:rholessrhoprime}
w_\fk m\rho+\rho'=\sum_{\beta\in\Delta'\cap\Delta(\fk n)} \beta
\end{equation}
Continuing with the proof of \cite[Theorem 2.4]{BP} in Section 2.2,
\begin{equation} \label{eq-mult}
\begin{aligned}
\left[\pi \otimes V_{\mathfrak{k}}(\rho) : V_{\mathfrak{k}}(\tau')\right] &=  [\pi : V_{\mathfrak{k}}(\tau') \otimes V_{\mathfrak{k}}(\rho)] \\
&= [\pi_{\fk m} \otimes \mathbb{C}_{\xi} : V_{\mathfrak{k}}(\tau')|_M \otimes V_{\mathfrak{k}}(\rho)|_M] \\
&= [\pi_{\fk m} \otimes \mathbb{C}_{\xi} \otimes V_{\mathfrak{k}}(\rho)|_M : V_{\mathfrak{k}}(\tau')|_M] \\
&= [\pi_{\fk m} \otimes \mathbb{C}_{\xi} \otimes (V_{\mathfrak{k} \cap \mathfrak{m}}(\rho_{\mathfrak{m}}) \otimes \mathbb{C}_{\rho_{\mathfrak{n}}} \otimes {\bigwedge}^{\bullet}\mathfrak{n}^*):
V_{\mathfrak{k}}(\tau')|_M] \\
&= [\pi_{\fk m} \otimes V_{\mathfrak{k} \cap \mathfrak{m}}(\rho_{\mathfrak{m}}) \otimes \mathbb{C}_{\xi + \rho_{\mathfrak{n}}} \otimes {\bigwedge}^{\bullet}\mathfrak{n}^*:
V_{\mathfrak{k}}(\tau')|_M].
\end{aligned}
\end{equation}
The penultimate step above uses \cite[Lemma 2.3]{BP}, and that
${\bigwedge}^{\bullet}\mathfrak{n}^*$ consists of weights of the form
$\displaystyle{-\sum_{\alpha \in S}} \alpha$, where $S$ is a subset of the roots in $\Delta(\fn)$.

\begin{proposition}\label{prop-branching-Dirac}
Let $\pi = {\rm Ind}_{M}^G(\mathbb{C}_{\xi} \otimes \pi_{\fk m})$ be an irreducible, unitary
representation with $\pi_\fk m$ satisfying (i)-(iii).  Then
\begin{equation}\label{eq:mult}
[\pi_{\fk m} \otimes V_{\mathfrak{k} \cap \mathfrak{m}}(\rho_{\mathfrak{m}}) \otimes \mathbb{C}_{\xi + \rho_{\mathfrak{n}}} \otimes {\bigwedge}^{\bullet}\mathfrak{n}^*:
V_{\mathfrak{k}}(\tau')|_M] = [\pi_\fk m \otimes V_{\mathfrak{k} \cap \mathfrak{m}}(\rho_\fk m): V_{\mathfrak{k} \cap \mathfrak{m}}(\delta_M)].
\end{equation}
(Recall that $H_D(\pi_\fk m)$ is either zero or a multiple of
$V_{\mathfrak{k} \cap \mathfrak{m}}(\delta_M)$).
\end{proposition}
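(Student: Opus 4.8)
The plan is to follow the blueprint of the proof of \cite[Theorem 2.4]{BP} in Section 2.2, the decisive new ingredient being that hypothesis (iii) is now a theorem, namely Proposition \ref{prop-positive-result}. By the chain of identities \eqref{eq-mult}, what remains to be shown is precisely \eqref{eq:mult}, so the task is to evaluate its left-hand side. First I would move the extra tensor factors across by adjunction: writing $[X:V]$ for $\dim\Hom_M(V,X)$ as in \eqref{eq-mult}, using $\bigl({\bigwedge}^\bullet\fk n^*\bigr)^\vee\cong{\bigwedge}^\bullet\fk n$ as $M$-modules, and decomposing $\pi_\fk m\otimes V_{\fk k\cap\fk m}(\rho_\fk m)$ into $\widetilde{K\cap M}$-types, one obtains, with $A:=V_{\fk k}(\tau')|_M\otimes\bC_{-\xi-\rho_\fk n}\otimes{\bigwedge}^\bullet\fk n$,
\[
[\pi_\fk m\otimes V_{\fk k\cap\fk m}(\rho_\fk m)\otimes\bC_{\xi+\rho_\fk n}\otimes{\bigwedge}^\bullet\fk n^*:V_{\fk k}(\tau')|_M]=\sum_{\delta_M'}\,[A:V_{\fk k\cap\fk m}(\delta_M')]\cdot[\pi_\fk m\otimes V_{\fk k\cap\fk m}(\rho_\fk m):V_{\fk k\cap\fk m}(\delta_M')].
\]
By hypothesis (iii) the sum runs only over $\delta_M'$ with $\delta_M'-\delta_M\in\sum_{\gamma\in\Delta_M^+}\bZ_{\geq0}\gamma$, and its $\delta_M'=\delta_M$ summand equals the right-hand side of \eqref{eq:mult} as soon as $[A:V_{\fk k\cap\fk m}(\delta_M)]=1$. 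Hence it suffices to establish: (A) $[A:V_{\fk k\cap\fk m}(\delta_M)]=1$; and (B) $[A:V_{\fk k\cap\fk m}(\delta_M')]=0$ for every admissible $\delta_M'\neq\delta_M$.

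Both claims I would settle by analysing the highest weight vectors of weight $\delta_M'$ in $A$ (those annihilated by the positive root spaces of $\fk m$), with the $\fk z(\fk m)$-central character as a book-keeping device. Three structural facts drive this. (1) By \eqref{eq:tautotauM}--\eqref{eq:rholessrhoprime}, $\tau'=\delta_M+\bigl(\xi+\rho_\fk n-\sum_{\beta\in\Delta'\cap\Delta(\fk n)}\beta\bigr)$, and $\tau'=2\la-\rho'$ is dominant for $\Delta'$ because $2\la$ is dominant, regular and integral for $\Delta'$. (2) The weights of ${\bigwedge}^\bullet\fk n$ are the $\sum_{\alpha\in S}\alpha$ with $S\subseteq\Delta(\fk n)$; since every root of $\fk n$ pairs strictly positively with the cocharacter defining $\fk q=\fk m\oplus\fk n$ while the roots of $\fk m$ pair trivially, the $\fk z(\fk m)$-component of any weight of $A$ determines the $\fk z(\fk m)$-sum of the corresponding $S$. (3) Every weight of $V_{\fk k\cap\fk m}(\delta_M')$ vanishes on $\fk z(\fk m)$, so a weight-$\delta_M'$ vector of $A$ comes from a weight $w$ of $V_{\fk k}(\tau')$ and a subset $S\subseteq\Delta(\fk n)$ with $w-\xi-\rho_\fk n+\sum_{\alpha\in S}\alpha=\delta_M'$ and with $\sum_{\alpha\in S}\alpha$ agreeing on $\fk z(\fk m)$ with $\sum_{\beta\in\Delta'\cap\Delta(\fk n)}\beta$. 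Combining (1), (3) and the $\Delta'$-dominance of $\tau'$, one checks that a vector which is in addition $\fk m$-highest can occur only with $w=\tau'$, $S=\Delta'\cap\Delta(\fk n)$, hence $\delta_M'=\delta_M$, and that for these choices the relevant weight space of $A$ is one-dimensional. This yields (A) and (B).

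I expect the heart of the argument — and the main obstacle — to be (B): ruling out that a strictly larger $\delta_M'=\delta_M+\sum_\gamma m_\gamma\gamma$ (with the $m_\gamma$ not all zero, which are exactly the ones permitted by (iii)) can recombine with a different exterior-power line of ${\bigwedge}^\bullet\fk n$ so as to return to the extremal weight $\tau'$. Here both hypothesis (iii) — equivalently Proposition \ref{prop-positive-result}, which confines the $\widetilde{K\cap M}$-types of $\pi_\fk m\otimes V_{\fk k\cap\fk m}(\rho_\fk m)$ to $\delta_M+\sum_{\gamma\in\Delta_M^+}\bZ_{\geq0}\gamma$ — and the formula \eqref{eq:tautotauM} are indispensable: (iii) constrains the ``$\fk m$-displacement'' to a sum of positive roots of $\fk m$, while \eqref{eq:tautotauM} says the passage from $\delta_M$ to $\tau'$ is entirely the ``$\fk n$-displacement'' $\xi+\rho_\fk n-\sum_{\beta\in\Delta'\cap\Delta(\fk n)}\beta$, and a norm comparison of PRV type (Theorem \ref{thm-PRV}) shows the two cannot cancel. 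The only input entering by classical type is the verification that $\fk z(\fk m)$ is large enough, and $\Delta(\fk n)$ concrete enough, for $S$ to be genuinely pinned down in (2)--(3); this is handled using the standard realizations of the parabolic subgroups from Sections 3--6. Once (A) and (B) hold, \eqref{eq:mult} follows at once from the displayed identity, with no cancellation to worry about since every module appearing is honest — the exterior algebra enters ungraded, without signs.
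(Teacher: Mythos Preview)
Your reformulation via adjunction is correct, and claims (A) and (B) are true and do imply \eqref{eq:mult}. However, the route you sketch through the $\fk z(\fk m)$-central character is a detour that does not do what you claim: the $\fk z(\fk m)$-component of $\sum_{\alpha\in S}\alpha$ is a single linear constraint and does not ``genuinely pin down $S$''; many subsets $S\subseteq\Delta(\fk n)$ share the same central projection. The actual work in both (A) and (B) is the norm argument you invoke at the end, and that argument is type-independent --- so your remark that classical type enters to make $\fk z(\fk m)$ ``large enough'' is misleading.

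The paper's proof carries out the norm comparison directly on the left module of \eqref{eq:mult}, without adjunction. One writes every weight of $V_{\fk k\cap\fk m}(\delta_M')\otimes\bC_{\xi+\rho_\fk n}\otimes\bigwedge^\bullet\fk n^*$ as $\tau'+\sum_{\gamma\in\Delta_M}m_\gamma\gamma+\sum_{\beta'\in S_1}\beta'-\sum_{\beta\in S_2}\beta$ with $S_1=(\Delta(\fk n)\setminus S)\cap\Delta'$ and $S_2=S\cap(-\Delta')$, using \eqref{eq:tautotauM}--\eqref{eq:rholessrhoprime}. Since $\gamma,\beta',-\beta\in\Delta'$ and $\tau'$ is $\Delta'$-dominant, every such weight has norm $\ge\|\tau'\|$, with equality forcing $m_\gamma=0$ and $S_1=S_2=\emptyset$, i.e.\ $\delta_M'=\delta_M$ and $S=\Delta(\fk n)\cap\Delta'$. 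On the other side, every $\widetilde{K\cap M}$-type of $V_\fk k(\tau')|_M$ has highest weight of norm $\le\|\tau'\|$, with the extremal case $V_{\fk k\cap\fk m}(\tau')$ occurring once. This immediately yields \eqref{eq:mult}. Your (A) and (B) follow from this same inequality by duality, so your approach is equivalent once the central-character step is replaced by the norm comparison you already cite.
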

\begin{proof}
We use (iii); the fact that $\pi_\fk m\otimes V_{\mathfrak{k} \cap \mathfrak{m}}(\rho_\fk m)$ is a sum
of $\widetilde{K\cap M}-$types of the form
$$
\delta_M'=\delta_M+\underset{\gamma\in \Delta_M}{\sum}
m_\gamma\gamma.$$

Tensoring with
$\mathbb{C}_{\xi + \rho_{\mathfrak{n}}} \otimes {\bigwedge}^{\bullet}\mathfrak{n}^*$,
the $\widetilde{K\cap M}$-types that appear  must have highest weights of the form
$$
\delta_M' + \xi + \rho_{\mathfrak{n}} - \sum_{\alpha \in S} \alpha$$
for some $S \subseteq \Delta(\fn)$.

Combining
the arguments above, any $\widetilde{K\cap M}$-type appearing on the left module in \eqref{eq:mult}
must have highest weights of the form
\begin{equation} \label{otherktypes}
\begin{aligned}
&\delta_M' + \xi + \rho_{\mathfrak{n}} - \sum_{\alpha \in S} \alpha \\
=&\left(\delta_M + \sum_{\gamma \in \Delta_M,\ m_{\gamma} \geq 0} m_{\gamma}\gamma\right) + \xi + \rho_{\mathfrak{n}} -
\left(\sum_{\alpha \in S \cap \Delta'}
															\alpha + \sum_{\beta \in S \cap (-\Delta')}
															\beta\right) \\
=&\left(\delta_M + \sum_{\gamma \in \Delta_M,\ m_{\gamma} \geq 0} m_{\gamma}\gamma\right) + \xi + \rho_{\mathfrak{n}} -
\left(\sum_{\alpha \in \Delta(\mathfrak{n}) \cap \Delta'}
															\alpha - \sum_{\beta' \in \left(\Delta(\fn)\setminus S \right) \cap \Delta'}
															\beta' + \sum_{\beta \in S\cap (-\Delta')}
															\beta\right) \\
=&\ \tau' + \sum_{\gamma \in \Delta_M,\ m_{\gamma} \geq 0} m_{\gamma}\gamma + \sum_{\beta' \in S_1}
															\beta' - \sum_{\beta \in S_2}
															\beta,
\end{aligned}
\end{equation}
where $S_1:=\left(\Delta(\fn)\setminus S \right)\cap \Delta'$ and $S_2:=S\cap (-\Delta')$.

Consider the squared norm of the weight in \eqref{otherktypes}:
\begin{equation} \label{norm2}
\begin{aligned}
&\left\|\tau' + \sum_{\gamma \in \Delta_M,\ m_{\gamma} \geq 0} m_{\gamma}\gamma + \sum_{\beta' \in S_1}
															\beta' - \sum_{\beta \in S_2}
															\beta\right\|^2=\ ||\tau'||^2 +\\
& 2\left\langle \tau', \sum_{\gamma \in \Delta_M,\ m_{\gamma} \geq 0} m_{\gamma}\gamma + \sum_{\beta' \in S_1}
															\beta' - \sum_{\beta \in S_2}
															\beta \right\rangle + \left\|\sum_{\gamma \in \Delta_M,\ m_{\gamma} \geq 0} m_{\gamma}\gamma + \sum_{\beta' \in S_1}
															\beta' - \sum_{\beta \in S_2}
															\beta\right\|^2
\end{aligned}
\end{equation}
By construction, $\tau'$ is a dominant weight in $\Delta'$. On the other hand, we have seen from above that
$$\gamma \in \Delta_M \subset \Delta';\ \ \ \ \beta' \in \Delta';\ \ \ -\beta \in \Delta'.$$
 Thus $\langle \tau', \gamma \rangle$, $\langle \tau', \beta' \rangle$, $\langle \tau', -\beta \rangle$
are all non-negative. Therefore,
$$
\left\|\tau' + \sum_{\gamma \in \Delta_M,
\ m_{\gamma} \geq 0} m_{\gamma}\gamma + \sum_{\beta' \in S_1}	\beta' - \sum_{\beta \in S_2} \beta\right\|^2\geq \|\tau'\|^2.
$$
Equality occurs exactly when $\delta_M' = \delta_M$, and  $S_1$, $S_2$ are both empty. The latter condition further implies that $S=\Delta(\mathfrak{n}) \cap \Delta'$.

Since $V_{\mathfrak{k}}(\tau')|_M$ has $\widetilde{K \cap M}-$types of norm less than or equal to $\tau'$, the left module in \eqref{eq:mult} contains
$V_{\mathfrak{k} \cap \mathfrak{m}}(\tau')$ with multiplicity equal to $[\pi_\fk m\otimes V_{\mathfrak{k} \cap \mathfrak{m}}(\rho_\fk
m): V_{\mathfrak{k} \cap \mathfrak{m}}(\delta_M)]$.
\end{proof}

\smallskip
We now present the proof of Theorem \ref{thm:main1} and Theorem \ref{thm:main} for all $\pi = {\rm Ind}_{M}^G(\mathbb{C}_{\xi} \otimes \pi_u)$
in Theorem \ref{thm:unitarydual}. The same argument holds for Spin groups with
$\pi_u = triv$. It suffices to prove
\begin{equation} \label{eq-mult10}
[\pi \otimes V_{\mathfrak{k}}(\rho) : V_{\mathfrak{k}}(2\la - \rho)]
= \begin{cases} 1 & \text{if } \pi_u \in \widehat{M}^d \\
0 &\text{if } \pi_u \notin \widehat{M}^d
\end{cases}.
\end{equation}

The special case when $M = G$ and $\pi = \pi_u$ is the content of  Section 7.

By applying $\pi_\fk m = \pi_u \in \widehat{M}^d$ to \eqref{eq-mult} and \eqref{eq:mult},
$$\left[\pi \otimes V_{\mathfrak{k}}(\rho) : V_{\mathfrak{k}}(\tau')\right] = \left[\pi_u \otimes V_{\mathfrak{k} \cap \mathfrak{m}}(\rho_\fk m): V_{\mathfrak{k} \cap \mathfrak{m}}(\delta_M)\right].$$
When $\pi_u \in \widehat{M}^d$, the proof in Proposition \ref{prop-positive-result} implies that
$\pi_u$ has a unique spin-lowest $K-$type and hence the right hand side of the above equation is equal to $1$.

\medskip
The case $H_D(\pi_u) = 0$ occurs in Types C and D only. By the proof
of Proposition \ref{prop-branching-Dirac}, in particular Equation \eqref{otherktypes},
the $\widetilde{K\cap M}$-types appearing in the left module on the last line of \eqref{eq-mult} has highest weights
\begin{equation} \label{eq-parity}
\tau' + \sum_{\gamma \in \Delta_M,\ m_{\gamma} \geq 0} m_{\gamma}\gamma + \sum_{\beta' \in S_1}
															\beta' - \sum_{\beta \in S_2}
															\beta + {\bf e_i},
\end{equation}														
where ${\bf e_i}$ is the unit vector corresponding to the bolded ${\bf 1}$
in the proof of Proposition \ref{prop-positive-result}.
Consider the sum of coordinates of the expression in \eqref{eq-parity}:
since all the roots are of the form $2{\bf e_i}$ and/or ${\bf e_i \pm e_j}$
in Type C and D, the sum of coordinates in \eqref{eq-parity} must be of
opposite parity with that of $\tau'$.
Therefore, the multiplicity $\left[\pi \otimes V_{\mathfrak{k}}(\rho) : V_{\mathfrak{k}}(\tau')\right]$ in \eqref{eq-mult} is zero.

Hence \eqref{eq-mult10} holds, and this completes the proofs of
Theorems \ref{thm:main1} and \ref{thm:main}.\qed

\appendix

\section{The notion of unipotent representation}
James Arthur made conjectures in the 1980's which state (roughly) that
automorphic representations occurring in the residual spectrum of a
locally symmetric space associated to a number field $F$, should
be associated to $^\vee G-$equivalence classes of homomorphisms
$$
\Phi:\C W_F\times SL(2)\longrightarrow\ ^\vee G
$$
where $\C W_F$ is the Weil group. There are additional conditions such
as the image not contained in any proper
Levi component, and $\Phi(\C W_F)$ be bounded.
We refer to \cite{A} for a  very detailed analysis.
For $F$ a local field,  one
expects such representations to be the building blocks of the unitary
dual.
The homomorphism $\Phi\mid _{\bb   C^\times}$ determines a
  semisimple orbit and, in the case of $F=\bb C$ (which is the case in
  this paper)  should correspond to unitary induction. The
  infinitesimal character conjectured by Arthur is
  $$
  d\Phi\left(1,\begin{pmatrix} 1/2&0\\0&-1/2\end{pmatrix}\right).
  $$
When $\Phi\mid_{\C W_F}=Triv$, the infinitesimal character is $^\vee h/2$
  where  $\{ ^\vee e,\ ^\vee h,\ ^\vee f\}$ is a Lie triple associated to $\Phi(SL(2)).$
In the general case, the data for $\Phi$ correspond to a $^\vee G-$orbit, semisimple times
unipotent.

%
%

 In \cite{BV}, for the above reason, the special case  $\Phi|_{\bb
    C^\times}=Triv$ is studied.  These correspond to unipotent
  conjugacy classes. A set of representations $\pi$ associated to
    $\Phi$ are assumed to satisfy
  \begin{itemize}
  \item $\Ann(\pi) \subset U(\fk g)$ is maximal subject to the
    prescribed infinitesimal character.
  \end{itemize}
These representations are called {\it special unipotent Arthur
packets} associated to the nilpotent orbit in $^\vee \fk g$ determined by
$\Phi.$  The main result is that these packets satisfy the properties
conjectured by Arthur.

The {\it building blocks} of the unitary dual is conjectured to be the packets
associated to $\Phi$ satisfying $\Phi\mid_{\C W_F}=Triv$ and such
that the orbit of $^\vee e$ does not
meet any proper Levi component. It is clear that this cannot be the
case; the best known example is $G=Sp(2n,\bb C)$ and the
Segal-Shale-Weil (also called oscillator) representation. It is
unitary, not unitarily induced from any representation on a proper
Levi component, and its infinitesimal character is not of the form
$^\vee h/2.$

For $GL(n,\bb C)$, the unitary dual is determined in \cite{V1}, and
for the other classical groups in  \cite{B1}. The building blocks for
$GL(n,\bb C)$ are 1-dimensional unitary
representations of Levi components. For the other groups, a set of building blocks is
identified explicitly in \cite{B1}. They can be characterized as
irreducible representations which are
  \begin{itemize}
  \item unitary with half-integral infinitesimal character,
    \item their annihilator in the universal
  algebra is maximal for the given infinitesimal character.
\end{itemize}
They have properties analogous to the Arthur packets of special
unipotent representations. A minimal set of building blocks
requires that the representations not be unitarily induced irreducible
from proper Levi components. In \cite{B1}  the class of unipotent
representations is extended to include some unitarily induced representations from
proper Levi factors (and even some representations in complementary
series which fall under the category of special unipotent).
This is in line with the parameters introduced by
Arthur where the image of $\Phi$ meets a proper Levi component.
A parametrization in terms of the homomorphism $\Phi$ is given in
\cite[Chapter 11]{BV}; the infinitesimal
character is modified according to certain elements in the centralizer of the Lie
triple.

\medskip
A different parametrization, motivated by  the \textit{orbit philosophy}
is in \cite{B3}. It is in terms of nilpotent orbits $\CO\subset \fk g.$
It is shown there that they can be  obtained by iterating
  $\Theta-$lifts and tensoring with unitary characters starting with a
  1-dimensional representation on $O(n,\bb C)$ or the trivial of $Sp(2n,\bb C).$

Another definition of unipotent representations  is given and studied in
\cite{LMM}.  It is our understanding that the representations
listed  below match those in \cite{LMM}.

The packets associated to $^\vee h/2$ are called \textit{special
  unipotent}. For the more general
infinitesimal characters, they are called  \textit{unipotent}.
To be completely clear what we mean by unipotent representation, the
list of infinitesimal characters is in the next section.

\subsection{Parameters of Unipotent Representations}
\label{sec:upar}
{
We rely on \cite{BV} and \cite{B3}. For each $\CO\subset\fk g$ we
will give an infinitesimal character $(\la_\CO,\la_\CO)$, and a set of
$(\la_\CO,w\la_\CO)$ such that  $\{L(\la_{\C O},w\la_{\C O})\}$ are
the unipotent representations with asymptotic support $\CO.$
In all cases $\la_{\C O}$ and $-\la_{\C O}$ are in the same $W-$orbit.

\bigskip
\noindent\textbf{Main Properties of }$\boldsymbol{\la_{\CO}}$.
Suppose $\Pi$ is an
irreducible representation with infinitesimal character
$(\la_\CO,\la_\CO).$ Then $\la_\CO$ and $\Pi$ must satisfy:

\begin{enumerate}
\item $\Ann(\Pi)\subset U(\fk g)$ is the maximal primitive ideal
  $\C I_{\la_\CO}$ with infinitesimal character $(\la_{\C O},\la_\CO)$,

\item {$|\{\Pi\ :\ \Ann(\Pi)=\C I_{\la_\CO}\}| =\mid \widehat{A(\C
    O)}\mid,$ where $A(\CO)$ is the component group of the centralizer
  of an $e\in \C O,$}

\item $\Pi$ is unitary.
\end{enumerate}
We call such representations unipotent. The list of $\la_\CO$ is given below.  The choices satisfying
(3) rely on the determination of the unitary dual for classical groups
in \cite{B1}. The parameter will always have integer and half-integer coordinates, and the
corresponding system of integral co-roots is maximal.}

\begin{definition}
A special orbit $\CO$ (in the sense of Lusztig)
is called \textbf{stably trivial} if Lusztig's
quotient $\ovl{A}(\CO)$ equals the full component group $A(\CO).$
\end{definition}
For a definition and discussion of $\ovl{A}(\CO),$ see \cite{L},
chapter 13.

\smallskip
The set of unipotent representations as defined above contains the
\textrm{building blocks} of the unitary dual. They are attached  to
$\CO$ which are not induced (in the sense of Lusztig-Spaltenstein) from
any proper Levi component. For $\CO$ special (in the sense of Lusztig)
and not induced from a nilpotent orbit on a proper Levi component,
$\la_{\CO} = h(^\vee\CO)/2$ where $^\vee\CO$ is the
Barbasch-Spaltenstein-Vogan dual of $\CO.$ For other
special $\CO$ which are induced from proper Levi components, condition (2) may not be satisfied
if they are not stably trivial. See the example below. The component
group $A(\C O)$ depends on the isogeny class of $G.$ To make a
definition that includes all cases, one would have to take the
isogeny class into account. We leave this for  future
considerations. It is our understanding that a  definition of unipotent closely related to the one
above is considered in \cite{LMM} addresses this problem.

\medskip
{The partitions in the next examples denote rows}.
\begin{example}\

$\bullet$\ $\CO=(2222)\subset \mathfrak{sp}(8)$ is stably
  trivial, $A(\CO)=\ovl{A(\CO)}\cong\bZ_2$, $\la_\CO=(2,1,1,0).$
In this case $^\vee \C O$ corresponds to the partition $(531),$ and
$\la_\CO=h(^\vee \C O)/2.$

$\bullet$\ $\CO=(222)\subset \mathfrak{sp}(6)$ has dual orbit $^\vee \C O$ corresponding to $(331)$
but is not stably trivial; $A(\CO)\cong\bZ_2,$ while $\ovl{A(\C O)}\cong 1$.
In this case $h(^\vee \C O)/2=(1,1,0),$ and for
this infinitesimal character, conditions (1) and (3) are satisfied,
but (2) is not satisfied. The choice of infinitesimal character in
this case will be $\la_\CO=(3/2,1/2,1/2).$ There are two parameters,
$$
\be{pmatrix}
\la_L\\ \la_R
\ee{pmatrix}=
\be{pmatrix}
3/2&1/2&1/2\\
3/2&1/2&1/2
\ee{pmatrix}\qquad\text{ and }\qquad
\be{pmatrix}
3/2&1/2&1/2\\
1/2&3/2&-1/2
\ee{pmatrix}
$$
Note that $(1,1,0)$ is in the root lattice and drops down to the
adjoint group, $(3/2,1/2,1/2)$ while is not, so genuine for $Sp(2n,\bb C)$.

$\bullet$\ $\CO=(211)$ in $\mathfrak{sp}(4,\bb C)$ is not special in the sense of
Lusztig. The parameter is $\la_{\CO}=(3/2,1/2)$ and the
  representations are the two components of the oscillator
  representation:
  $$
\be{pmatrix}
\la_L\\ \la_R
\ee{pmatrix}=
\be{pmatrix}
3/2&1/2\\
3/2&1/2
\ee{pmatrix}\qquad\text{ and }\qquad
\be{pmatrix}
3/2&1/2\\
3/2&-1/2
\ee{pmatrix}
$$
\end{example}

\subsection{Type A}\label{sec:a} The group $G$ is $GL(n).$
Nilpotent orbits are determined by their
Jordan canonical form. An orbit is given by a partition, \ie a
sequence of numbers in decreasing order $\CO\longleftrightarrow (n_1,\dots
,n_k)$ that add up to $n.$  Let $(m_1,\dots ,m_l)$ be the dual
partition. The component group of $\C O$ is trivial.
The infinitesimal character is
$$
\la_\CO=\bigg(\frac{m_1-1}{2},\dots ,-\frac{m_1-1}{2},\dots
,\frac{m_l-1}{2},\dots, -\frac{m_l-1}{2}\bigg).
$$
The orbit is induced from the trivial orbit on the Levi component $\fm$
of a parabolic subalgebra  $\fp=\fm +\fn$ with
$\fm= \mathfrak{gl}(m_1)\times \dots \times \mathfrak{gl}(m_l).$ The corresponding
unipotent representation is spherical and induced irreducible from the
trivial representation on the same Levi component. {All orbits are
{\it special } and {\it stably trivial}}.
\subsection{Type B}\label{sec:b}
{
We describe the case $SO(2m+1)$. For $O(2m+1)$ there are twice the
parameters, the parameters for $SO$  are tensored with $sgn$.

\medskip
A nilpotent orbit is determined by its
Jordan canonical form (in the standard representation). Then $\CO$  is
parametrized by a partition $\CO\longleftrightarrow (n_1,\dots ,n_k)$
of $2m+1$ such that every even entry occurs an even number of times.
Let $(m'_0,\dots ,m'_{2p'})$ be the transpose partition (add an $m'_{2p'}=0$ if
necessary, in order to have an odd number of terms).  If $\CO$ is
represented by a tableau, these are the sizes of the columns in
decreasing order.
If there are any $m'_{2j}=m'_{2j+1}$, then pair them
together and remove them from the partition.
Then relabel and pair up the remaining columns $(m_0)(m_1,m_2)\dots
(m_{2p-1}m_{2p}).$ The members of each
pair have the same parity and $m_0$ is odd. $\la_\CO$ is given by the
coordinates
\begin{equation}
  \label{eq:unipb}
\aligned
(m_0)&\longleftrightarrow (\frac{m_0-2}{2},\dots ,\frac12),\\
(m'_{2j}=m'_{2j+1})&\longleftrightarrow (\frac{m_{2j}'-1}{2},\dots ,
-\frac{m_{2j}'-1}{2})\\
(m_{2i-1}m_{2i})&\longleftrightarrow (\frac{m_{2i-1}}{2},\dots , -\frac{m_{2i}-2}{2}).
\endaligned
\end{equation}
}

In case $m'_{2j}=m'_{2j+1},$ $\CO$ is induced from an orbit
$$
\CO_\fk m\subset\fm= \mathfrak{so}(*)\times \mathfrak{gl}\big(\frac{m'_{2j}+m'_{2j+1}}{2}\big)
$$
where $\fk m$ is the
Levi component of a parabolic subalgebra $\fp=\fm +\fn$. $\CO_\fk m$ is
the trivial nilpotent on the $\mathfrak{gl}-$factor. The
component groups satisfy $A_G(\CO)\cong A_M(\CO_{\fk m}).$
Each unipotent representation is
unitarily induced from a unipotent representation attached to
$\CO_\fk m.$

Similarly if some $m_{2i-1}=m_{2i},$ then $\CO$ is
induced from a
$$
\CO_{\fk m}\subset \fk m\cong \mathfrak{so}(*)\times
\mathfrak{gl}(\frac{m_{2i-1}+m_{2i}}{2}),\quad  (0)\ \text{ on the }
  \mathfrak{gl}-\text{factor.}
$$
Here $A_G(\CO)\not\cong A_M(\CO_{\fk m}),$ but
each unipotent representation is (not necessarily unitarily) induced
irreducible from a representation on the Levi component
$\fk m$, unipotent on
$\mathfrak{so}(*)$, and a character on the $\mathfrak{gl}$-factor.

\medskip

{The {\it stably trivial} orbits are the
ones such that every odd sized part appears an even number of
times, except for the largest size}. An orbit is called triangular if it has
partition
$$
\CO\longleftrightarrow(2m+1,2m-1,2m-1,\dots,3,3,1,1).
$$


We give the explicit Langlands parameters of the unipotent
representations. There are $\mid A_G(\CO)|$ distinct representations.
Let
$$
(\underset{r_k}{\underbrace{k,\dots ,k}},\dots
,\underset{r_1}{\underbrace{1,\dots 1}})
$$
be the rows of the Jordan form of the
nilpotent orbit. The numbers $r_{2i}$ are even.
The reductive part of the centralizer (when $G$ is the orthogonal
group) of the nilpotent element is a product of $O(r_{2i+1})$, and
$Sp(r_{2j})$.

The columns are paired as in (\ref{eq:unipb}). The pairs
$(m'_{2j}=m'_{2j+1})$ contribute to the spherical part of the
parameter,
\begin{equation}
  \label{eq:unipbprime}
(m'_{2j}=m'_{2j+1})\longleftrightarrow
\begin{pmatrix}
  \la_L\\ \la_R
\end{pmatrix}
=
\begin{pmatrix}\frac{m'_{2j}-1}{2}&,&\dots&,&
&-\frac{m'_{2j}-1}{2}\\
\frac{m'_{2j}-1}{2}&,&\dots&,& &-\frac{m'_{2j}-1}{2}
\end{pmatrix}  .
\end{equation}
The singleton $(m_0)$ contributes to the spherical part,
\begin{equation}
\label{eq:unipbzero}
(m_0)\longleftrightarrow
 \begin{pmatrix}
\frac{m_{0}-2}{2}&,&\dots&,&\frac{1}{2}\\
\frac{m_{0}-2}{2}&,&\dots&,&\frac{1}{2}
\end{pmatrix}  .
\end{equation}
Let $(\eta_1,\dots ,\eta_{p})$ with $\eta_i=\pm 1,$ one for each
$(m_{2i-1},m_{2i})$. An $\eta_i=1$ contributes to the spherical part
of the parameter, with coordinates as in (\ref{eq:unipbprime}) and (\ref{eq:unipbzero}). An
$\eta_i=-1$ contributes
\begin{equation}
\label{eq:unipbep}
\begin{pmatrix}
\frac{m_{2i-1}}{2}&,&\dots&,&\frac{m_{2i}+2}{2}&\frac{m_{2i}}{2}&,&\dots
&,&-\frac{m_{2i}-2}{2}\\
\frac{m_{2i-1}}{2}&,&\dots&,&\frac{m_{2i}+2}{2}&\frac{m_{2i}-2}{2}&,&\dots
&,&-\frac{m_{2i}}{2}
\end{pmatrix}.
\end{equation}
{If $m_{2p}=0,$ $\eta_p=1$ only for $SO$.}
\subsection*{Explanation}
  \begin{enumerate}
  \item Odd sized rows contribute a $\bZ_2$ to $A(\CO),$
    even sized rows a $1.$
\item When there are no $m'_{2j}=m'_{2j+1},$ every row size occurs.
{The inequalities
$$\dots (m_{2i-1}\ge m_{2i})>(m_{2i+1}\ge  m_{2i+2})\dots
$$}
imply that there are $m_{2i}-m_{2i+1}$ rows of size $2i+1.$ Each
pair $(m_{2i-1}\ge m_{2i})$ contributes exactly 2 parameters
corresponding to the $\bZ_2$ in $A(\CO)$.
\item The pairs $(m'_{2j}=m'_{2j+1})$ lengthen the sizes of the rows
  without changing their parity. The component group does not change,
  they do not affect the  number of parameters.
  \end{enumerate}
As already mentioned, when $G=O(2m+1,\bC)$ the unipotent
representations are obtained from those of $SO(2m,\bC)$ by lifting
them to $O(2m,\bC)$, and also tensoring with $sgn$.

\bigskip
{
  In case $m_{2i-1}=m_{2i}$ even, there is another choice of parameter:
\begin{equation}\label{B:sunipotent}
(m_{2i-1}=m_{2i})\longleftrightarrow (\frac{m_{2i-1}-1}{2},\dots ,-\frac{m_{2i}-1}{2}).
\end{equation}
The representations are unitarily induced irreducible from
representations of the same type on  Levi
components $GL(2m_{2i-1})\times SO(2n+1-2m_{2i-1}).$   The number of
parameters no longer matches $|A(\CO)|,$ but special unipotent
representations are included.
}

\subsection{Type C}\label{sec:c}

{
A nilpotent orbit is determined by its
Jordan canonical form (in the standard representation). It is
parametrized by a partition $\CO\longleftrightarrow(n_1,\dots ,n_k)$
of $2n$ such that
every odd part occurs an even number of times.
Let $(c'_0,\dots ,c'_{2p'})$ be the dual partition (add a $c'_{2p'}= 0$ if
necessary in order to have an odd number of terms). As in type B,
these are the sizes of the columns of the tableau corresponding to $\CO$.
If there are any $c'_{2j-1}=c'_{2j}$ pair them up and set aside. Then relabel and pair
up the remaining columns $(c_0c_1)\dots
(c_{2p-2}c_{2p-1})(c_{2p}).$ The members of each pair have the same
parity. The last one, $c_{2p},$ is always even. Then
form a parameter
\begin{align}
(c'_{2j-1}=c'_{2j})&\longleftrightarrow (\frac{c_{2j}-1}{2},\dots ,
-\frac{c_{2j}-1}{2}),\label{eq:c1}\\
(c_{2i}c_{2i+1})&\longleftrightarrow (\frac{c_{2i}}{2},\dots ,
-\frac{c_{2i+1}-2}{2}),\label{eq:c2}\\
c_{2p}&\longleftrightarrow (\frac{c_{2p}}{2},\dots , 1).\label{eq:c3}
\end{align}
}

The nilpotent orbits and the unipotent representations have the same
properties with respect to these pairs as the corresponding ones in
type B.

{The {\it stably trivial} orbits are the
ones such that every even sized part appears an even number of
times}.

An orbit is called triangular if it corresponds to the
partition $(2m,2m,\dots,4,4,2,2).$


\bigskip
We give a parametrization of the unipotent representations in terms of
their Langlands parameters. There are $\mid A_G(\CO)\mid$
representations.

Let
$$
(\underset{r_k}{\underbrace{k,\dots ,k}},\dots
,\underset{r_1}{\underbrace{1,\dots ,1}})
$$
be the rows of the Jordan form of the
nilpotent orbit. The numbers $r_{2i+1}$ are even.
{
The reductive part of the centralizer of the
nilpotent element is a product of $Sp(r_{2i+1})$, and $O(r_{2j})$.

The elements $(c'_{2j-1}=c'_{2j})$ and $c_{2p}$ contribute to the
spherical part of the parameter as in (\ref{eq:unipbprime}) and
(\ref{eq:unipbzero}).
Let $(\eta_1,\dots ,\eta_p)$ be such that $\eta_i=\pm 1,$ one for each
$(c_{2i},c_{2i+1}).$  An $\eta_i=1$ contributes to the spherical
part, according to the infinitesimal character. An $\eta_i=-1$ contributes
\begin{equation}
\label{eq:unipcep}
\begin{pmatrix}
&\frac{c_{2i}}{2}&,&\dots&,&\frac{c_{2i+1}+2}{2}&\frac{c_{2i+1}}{2}&\dots
&,&-\frac{c_{2i+1}-2}{2}\\
&\frac{c_{2i}}{2}&,&\dots&,&\frac{c_{2i+1}+2}{2}&\frac{c_{2i+1}-2}{2}&\dots &,&-\frac{c_{2i+1}}{2}\\
\end{pmatrix}.
\end{equation}
}
The explanation is similar to type B.

\bigskip
{
  In case $c_{2i}=c_{2i+1}$ odd, there is another choice of parameter:
\begin{equation}\label{C:sunipotent}
(c_{2i}=c_{2i+1})\longleftrightarrow (\frac{c_{2i-1}-1}{2},\dots ,-\frac{c_{2i}-1}{2}).
\end{equation}
The representations are unitarily induced irreducible from
representations of the same type on  Levi
components \newline $GL(2c_{2i}+1)\times Sp(2n-2c_{2i}).$   The number of
parameters no longer matches $|A(\CO)|,$ but special unipotent
representations are included.
}
\subsection{Type D}\label{sec:d}

{
We treat the case $G=SO(2m).$
A nilpotent orbit is determined by its
Jordan canonical form (in the standard representation). It is
parametrized by a partition $\CO\longleftrightarrow (n_1,\dots ,n_k)$
of $2m$ such that
every even part occurs an even number of times.
Let $(m'_0,\dots ,m'_{2p'-1})$ be the dual partition (add a $m'_{2p'-1}=0$ if
necessary), the sizes of the columns of the tableau corresponding to
$\CO.$  If there are any $m'_{2j}=m'_{2j+1}$ pair them up and
remove from the partition.
Then pair up the remaining columns $(m_0,m_{2p-1})(m_1,m_2)\dots
(m_{2p-3},m_{2p-2}).$ The members of each pair have the same parity and
$m_0, m_{2p-1}$ are both even. The infinitesimal character is
\begin{equation}
  \label{eq:unipd}
\aligned
(m'_{2j}=m'_{2j+1})&\longleftrightarrow (\frac{m'_{2j}-1}{2}\dots , -\frac{m'_{2j}-1}{2})\\
(m_0m_{2p-1})&\longleftrightarrow (\frac{m_0-2}{2},\dots ,-\frac{m_{2p-1}}{2}),\\
(m_{2i-1}m_{2i})&\longleftrightarrow (\frac{m_{2i-1}}{2}\dots , -\frac{m_{2i}-2}{2})
\endaligned
\end{equation}
}
{
The nilpotent orbits and the unipotent representations have the same
properties with respect to these pairs as the corresponding ones in
type B.
An exception occurs for $G=SO(2m)$ when the partition is formed of pairs
$(m'_{2j}=m'_{2j+1})$ only. In this case there are two nilpotent
orbits corresponding to the partition. There are also two nonconjugate
Levi components of the form $\mathfrak{gl}(m'_0)\times \mathfrak{gl}(m'_2)\times \dots
\mathfrak{gl}(m'_{2p'-2})$ of parabolic subalgebras. There are two unipotent
representations each induced irreducible from the trivial
representation on the corresponding Levi component.

{The {\it stably
trivial} orbits are the ones such that every even sized part appears
an even number of times}.

A nilpotent orbit is triangular if it
corresponds to the partition $(2m-1,2m-1,\dots ,3,3,1,1).$

}
{
The parametrization of the unipotent representations follows from types
B,C, with the pairs $(m'_{2j}=m'_{2j+1})$ and $(m_0,m_{2p-1})$
contributing to the spherical part of the parameter only. Similarly for
$(m_{2i-1},m_{2i})$ with $\ep_i=1$ spherical only, while $\ep_i=-1$
contributes analogous to (\ref{eq:unipbep}) and (\ref{eq:unipcep}).

The explanation parallels that for types B, C.

\bigskip
When $G=O(2m,\bC)$ the unipotent representations are obtained from
those of $SO(2m,\bC)$ by lifting them to $O(2m,\bC)$, and also
tensoring with $sgn$. In the case when all $m'_{2j}=m'_{2j+1}$ the
representations associated to the two nilpotent orbits have the same
lift, and it is invariant under tensoring with $sgn$. Otherwise
tensoring with $sgn$ gives inequivalent unipotent representations.
}

\medskip
As in types B,C, when $m_{2i-1}=m_{2i}$ is even, there is another choice of infinitesimal character:
\begin{equation}\label{D:sunipotent}
(m_{2i-1}=m_{2i})\longleftrightarrow (\frac{m_{2i-1}-1}{2},\dots ,-\frac{m_{2i}-1}{2}).
\end{equation}
The representations are unitarily induced irreducible from
representations of the same type on  Levi
components $GL(2m_{2i})\times SO(2n-2m_{2i-1}).$   The number of
parameters no longer matches $|A(\CO)|,$ but special unipotent
representations are included.

\section{Some Atlas Calculations}
In this section, we illustrate some of the results on  signatures on
cx-relevant $K-$types considered in Sections 4--6 using the software
\texttt{atlas} \cite{ALTV, At}.
The calculations are carried out using
the function \texttt{print\_sig\_irr\_long}, which is available at
\medskip
\begin{center}
\texttt{http://klein.mit.edu/$\scriptstyle\sim$dav/atlassem/bottom.at}.
\end{center}
\bigskip

\subsection{Section \ref{sec-bn>k}, Equation \eqref{eq:brelevant}} Let $G = SO(7,\mathbb{C})$,
and $\la = (-1/2; -2,-1)$.
The \texttt{atlas} is
\begin{verbatim}
atlas> set G = complexification(SO(7))
atlas> set all = all_parameters_gamma(G,[4,2,1,4,2,1]/2)
atlas> all[0]
Value: final parameter(x=47,lambda=[5,3,1,5,3,1]/2,nu=[4,2,1,4,2,1]/2)
\end{verbatim}

The signature of some of the $K-$types are given by:
\begin{verbatim}
atlas> print_sig_irr_long(all[0],KGB(G,0),15)
sig  x  lambda                        hw                          dim
s    0  [  1,  1,  1, -1, -1, -1 ]/2  [ -2, -1,  0,  2,  1,  0 ]  1
s    0  [  1,  1,  1,  1,  1, -1 ]/2  [ -2, -1,  0,  3,  2,  0 ]  21
1    0  [ 1, 1, 1, 1, 1, 1 ]/2        [ -2, -1,  0,  3,  2,  1 ]  35
\end{verbatim}
The $K-$types of $J(\la,-s\la)$ are in
the column labelled \texttt{hw}. More precisely, by adding the $i^{th}$-coordinate
and the $(i + \text{rank}(G))^{th}$-coordinate of the vector in the \texttt{hw}
column, one can get the highest weight of a $K-$type in usual coordinates.
For example, $[ -2, -1,  0,  3,  2,  0 ]$ corresponds to the highest
weight $(-2+3, -1+2, 0+0) = (1,1,0)$ in the usual coordinates.

The \texttt{sig} column represents the signature of the Hermitian form of $J(\la_{rel},-s\la_{rel})$.
The form is definite if and only if the entries of the \texttt{sig} column are all scalars or all scalar multiples of \texttt{s}. In particular,
the above output shows that the form is indefinite on the $K-$types $V_{\mathfrak{k}}(1,1,0)$
and $V_{\mathfrak{k}}(1,1,1)$, which matches Equation \eqref{eq:brelevant}.

\bigskip

\subsection{Section \ref{sec:additionalb}, Case (c)} Let $G = SO(9,\mathbb{C})$ and
$\la=(-5/2, -3/2, -1/2) \cup (2)$. We are in the setting of Case (c). Its $K-$type signatures are given by
\begin{verbatim}
sig x  lambda                      hw                        dim
1   0  [ 1, 1, 1, 1,-1,-1,-1,-1]/2 [-3,-2,-1, 0, 3, 2, 1, 0] 1
1   0  [ 1, 1, 1, 1, 1, 1,-1,-1]/2 [-3,-2,-1, 0, 4, 3, 1, 0] 36
s   0  [ 3, 1, 1, 1, 1,-1,-1,-1]/2 [-2,-2,-1, 0, 4, 2, 1, 0] 44
1   0  [ 3, 3, 1, 1, 1, 1,-1,-1]/2 [-2,-1,-1, 0, 4, 3, 1, 0] 495
s   0  [ 3, 1, 1, 1, 3, 1,-1,-1]/2 [-2,-2,-1, 0, 5, 3, 1, 0] 910
\end{verbatim}
In this case, the $K-$types $V_{\mathfrak{k}}(1,1,0,0)$ and $V_{\mathfrak{k}}(2,0,0,0)$ have different signatures.

\bigskip

\subsection{Section \ref{sec:nonshc}, non-spherical Type C}
Let $G = Sp(8,\mathbb{C})$ and parameter
$\begin{pmatrix} 1/2 \\ -1/2  \end{pmatrix} \cup (-2,-1) \cup (3/2)$.
The \texttt{atlas} code for this parameter is
\begin{verbatim}
atlas> set G = Sp(8,C)
atlas> set all = all_parameters_gamma(G,[4,3,2,1,4,3,2,1]/2)
atlas> LKT(all[1])
Value: (KGB element #0,[ 1, 0, 0, 0, 0, 0, 0, 0 ]/1)
\end{verbatim}

The signatures of the $K-$types are:
\begin{verbatim}
sig  x  lambda                  hw                          dim
1    0  [ 1,0,0,0,0,0,0,0 ]/1  [ -3,-3,-2,-1, 4, 3, 2, 1 ]  8
s    0  [ 1,1,1,0,0,0,0,0 ]/1  [ -3,-2,-1,-1, 4, 3, 2, 1 ]  48
\end{verbatim}
The $K-$types $V_{\mathfrak{k}}(1,0,0,0)$ and $V_{\mathfrak{k}}(1,1,1,0)$ have different signatures.

\bigskip

\subsection{Section \ref{sec-bn>k1}, Equation \eqref{eq:factor}}  This is an example where the Hermitian
  form is indefinite on a single $K-$type. Let $G = SO(6,\mathbb{C})$
  and the parameter be given  by $(-3/2, -1/2;0)$. Then the signatures are given by:
\begin{verbatim}
sig  x  lambda                        hw                          dim
1    0  [ 0, 0, 0, 0, 0, 0 ]/1        [ -2, -1,  0,  2,  1,  0 ]  1
1+s  0  [ 1, 1, 0, 0, 0, 0 ]/1        [ -1,  0,  0,  2,  1,  0 ]  15
1    0  [ 1, 0, 0, 1, 0, 0 ]/1        [ -1, -1,  0,  3,  1,  0 ]  20
s    0  [ 1, 1, 1, 1, 0, 0 ]/1        [ -1,  0,  1,  3,  1,  0 ]  45
\end{verbatim}
The $K-$type $V_{\mathfrak{k}}(1,1,0)$ has indefinite signature as in Equation
\eqref{eq:factor} with an odd number of spherical coordinates.

\bigskip

\subsection{Section \ref{sec:nsphd}, non-spherical Type D}  Let $G=SO(10,\mathbb{C})$.
Let $\begin{pmatrix} 1/2 \\ -1/2 \end{pmatrix} \cup (-2,-1,0) \cup (5/2)$
be the parameter, where the spherical part satisfies Case (c) of
Section \ref{sec:dother}. Then the signatures of the $K-$types are given by:
\begin{verbatim}
sig   x  lambda                    hw                          dim
s     0  [1,0,0,0,0,0,0,0,0,0 ]/1  [-3,-3,-2,-1,0,4,3,2,1,0 ]  10
s     0  [1,1,1,0,0,0,0,0,0,0 ]/1  [-3,-2,-1,-1,0,4,3,2,1,0 ]  120
1+2s  0  [1,1,0,0,0,1,0,0,0,0 ]/1  [-3,-2,-2,-1,0,5,3,2,1,0 ]  320
1+s   0  [2,0,0,0,0,1,0,0,0,0 ]/1  [-2,-3,-2,-1,0,5,3,2,1,0 ]  210
s     0  [1,1,1,1,0,1,0,0,0,0 ]/1  [-3,-2,-1,0,0,5,3,2,1,0 ]   1728
\end{verbatim}
The $K-$types $V_{\mathfrak{k}}(1,1,1,0,0)$ and $V_{\mathfrak{k}}(2,1,0,0,0)$ have opposite signatures.
Moreover, this is the only place where the signatures are different on the
level of cx-relevant $K-$types.

\ifx\undefined\bysame
\newcommand{\bysame}{\leavevmode\hbox to3em{\hrulefill}\,}
\fi

\bigskip

\centerline{\scshape Funding}
Barbasch is supported by NSF grant 2000254. Dong is supported by the
National Natural Science Foundation of China (grant 12171344). Wong is supported by the Presidential Fund of
CUHK(Shenzhen) and the National Natural Science Foundation of China
(grant 11901491).

\bigskip
\centerline{\scshape Acknowledgements}
This work was initiated at the Workshop on Lie Group Representations and Automorphic Forms
at Sichuan University in October 2018. The authors would like to thank
the institution for their hospitality. The authors would also like to
thank an anonymous referee for his/her suggestions.

%
%
%
%
%
%
%

\end{document}